\documentclass[11pt]{article}
\usepackage[margin=1in]{geometry}
\usepackage{amsmath,amssymb,amsthm,mathtools,enumitem,bm}
\usepackage{mathrsfs}
\usepackage[T1]{fontenc}
\usepackage{lmodern}
\usepackage{microtype}
\usepackage{hyperref}
\usepackage{comment}

\title{Abundance of Bergman metrics with constant positive holomorphic sectional curvature} %Positive integral multiples of the Fubini–Study metric are locally Bergman metrics in high dimensions}
\author{}
\date{}

\theoremstyle{plain}
\newtheorem{theorem}{Theorem}[section]
\newtheorem*{theorem*}{Theorem}
\newtheorem{corollary}{Corollary}[section]
\newtheorem{lemma}{Lemma}[section]
\newtheorem{proposition}{Proposition}[section]

\theoremstyle{definition}
\newtheorem{definition}{Definition}[section]
\newtheorem*{definition*}{Definition}

\theoremstyle{remark}
\newtheorem{remark}{Remark}[section]

\numberwithin{equation}{section}

\newenvironment{customtheorem}[1]
{\par\noindent\textbf{Theorem #1.} \itshape}
{\par}

\newcommand{\C}{\mathbb C}
\newcommand{\R}{\mathbb R}
\newcommand{\N}{\mathbb N}
\newcommand{\Z}{\mathbb Z}
\newcommand{\pj}{\mathbb P}

\newcommand{\om}{\Omega}
\newcommand{\dd}{\,d}
\newcommand{\Ol}{\overline}
\newcommand{\al}{\alpha}
\newcommand{\inner}[2]{\langle #1, #2\rangle}
\newcommand{\I}{\mathcal I_m}
\newcommand{\A}{A^2_{(n,0)}}
\newcommand{\leqB}{\preceq_B}
\newcommand{\simB}{\sim_B}
\newcommand{\Res}{\mathcal{R}}

\newcommand{\U}{\mathrm{U}}

\begin{document}
\author{
Shreedhar Bhat, Soumya Ganguly, Achinta Kumar Nandi, Ming Xiao\thanks{M.~Xiao is supported in part by NSF grant DMS-2045104.}
}
\maketitle

\begin{abstract}
An outstanding open question, which has attracted renewed attention following the pioneering work of Huang--Li--Treuer, is whether, for a given positive integer $m$, there exists a complex manifold whose Bergman metric is locally isometric to $m$ times the Fubini--Study metric. Previously, this question had only been resolved in the case $m=1$.
In this paper, we construct, for any pair of positive integers $(m,n)$ with $n \geq 2$, an $\mathbb{R}$-parameter (hence uncountable) family of Reinhardt domains in $\mathbb{C}^n$ whose Bergman metrics are all locally isometric to $m$ times the Fubini--Study metric. Moreover, we show that the domains in this family are mutually Bergman inequivalent. 
This not only answers the folklore question, but also suggests that a reasonable classification of the geometry of such complex manifolds is infeasible. We also note such examples cannot exist in dimension one. 
The results complete the remaining open case in the study of complex manifolds whose Bergman space separates points and whose Bergman metric has constant holomorphic sectional curvature. Our approach differs from existing methods in the literature. We reduce the construction to a mapping problem and apply a Brouwer fixed point argument to establish the existence of the desired domains.
\end{abstract}
\noindent
{\bf 2020 Mathematics Subject Classification}: 32A25, 32A36, 32Q02.

\medskip

\noindent
{\bf Key Words}: Bergman kernel; Bergman metric; Fubini--Study metric; holomorphic sectional curvature.

\section{Introduction}\label{sec1}

Since the classical works of Bergman \cite{Be33, Be35} and Kobayashi \cite{Ko59}, the Bergman kernel and metric have played a fundamental role in complex analysis and complex geometry. Bergman's original notion~\cite{Be33} was given for domains in $\mathbb{C}^n$, and was later extended by Kobayashi \cite{Ko59} to general complex manifolds. We recall Kobayashi's notion of the Bergman kernel on a complex manifold from \cite{Ko59}. Let $M$ be an $n$-dimensional complex manifold.  (Throughout the paper, complex manifolds are assumed to be connected.) Let $A^2_{(n,0)}(M)$ denote the space of holomorphic $(n,0)$-forms $f$ such that $|\int_M f \wedge \overline{f}| < \infty$. This is a Hilbert space with inner product
$$\langle f, g \rangle = (\sqrt{-1})^{n^2} \int_M f \wedge \overline{g}, \quad f, g \in A^2_{(n,0)}(M).$$
Given an orthonormal basis $\{\phi_k\}_{k=0}^N$ of $A^2_{(n,0)}(M)$, where $0 \leq N \leq \infty$, the Bergman kernel of $M$ is the $(n,n)$-form on $M \times M$ defined by
$K_M=\sum_k \phi_k \wedge \overline{\phi_k},$
which is independent of the choice of orthonormal basis. In local holomorphic coordinates $(z_1,\cdots,z_n)$ and $(w_1,\cdots,w_n)$, the kernel can be written as
\begin{equation}\label{eqnbergmanlocal}
	K_M = \widetilde{K}_M (z,w) \, dz_1 \wedge \cdots \wedge dz_n \wedge d\bar{w}_1 \wedge \cdots \wedge d\bar{w}_n,
\end{equation}
where $\widetilde{K}_M(\cdot,\cdot)$ is sesqui-holomorphic, i.e., holomorphic in the first variable and anti-holomorphic in the second. We also often restrict $K_M$ to the diagonal $z=w$ and refer to this restriction as the Bergman kernel of $M$.
%If $\widetilde{K}_M$ is everywhere positive and the associated K\"ahler form $\omega=i\partial\bar{\partial} \log \widetilde{K}_M$
%is positive-definite (these properties are independent of the choice of local coordinates), then the induced K\"ahler metric is called the Bergman metric on $M$.
When \( M \) is a domain in $\mathbb{C}^n$, the Bergman kernel \( K_M \) can be identified, in the standard Euclidean coordinates, with its scalar-valued representing function \( \widetilde{K}_M \); moreover, \( A^2_{(n,0)}(M) \) is naturally identified with the space \( A^2(M) \) of square-integrable holomorphic functions.  Kobayashi \cite{Ko59} further introduced the following natural conditions on the Bergman space of a complex manifold:

\begin{definition}\label{base point free and separate holomorphic directions}
	(Kobayashi \cite{Ko59})
	Let $M$ be an $n$-dimensional complex manifold. The Bergman space
	$A^2_{(n,0)}(M)$ is said to
	\begin{enumerate}
		\item[(1)] be \emph{base point free} if, for every $\xi \in M$, there exists
		$f \in A^2_{(n,0)}(M)$ such that $f(\xi)\neq 0$;
		
		\item[(2)] \emph{separate holomorphic directions} if, for every
		$\xi \in M$ and every nonzero vector $Z \in T^{1,0}_{\xi}M$, there exists
		$f \in A^2_{(n,0)}(M)$ such that $f(\xi)=0$ and $Zf(\xi)\neq 0$;
		
		\item[(3)] \emph{separate points} if, for any two distinct points
		$\xi_1,\xi_2 \in M$, there exists $f \in A^2_{(n,0)}(M)$ such that
		$f(\xi_1)=0$ and $f(\xi_2)\neq 0$.
	\end{enumerate}
\end{definition}

As shown by Kobayashi \cite{Ko59}, conditions~(1) and~(2) are necessary and sufficient for $M$ to admit a Bergman metric, namely the metric induced by the K\"ahler form given in local coordinates by
$\omega(z)=i\partial\bar{\partial} \log \widetilde{K}_M(z, z).$
This expression is independent of the choice of local coordinates. Condition (3) holds for many classes of interest, in particular for bounded domains in $\mathbb{C}^n$. Since this paper focuses on complex manifolds whose Bergman space satisfies all three conditions in Definition~\ref{base point free and separate holomorphic directions}, we introduce the following terminology for convenience.

\begin{definition}
	A complex manifold $M$ is called \emph{Bergman nondegenerate} if its Bergman space $A^2_{(n,0)}(M)$ is base point free, separates holomorphic directions, and separates points; that is, it satisfies the three conditions in Definition~\ref{base point free and separate holomorphic directions}.
\end{definition}

In particular, all bounded domains in $\mathbb{C}^n$ are Bergman nondegenerate.
One of the most basic models for the Bergman kernel and metric is the unit ball $\mathbb{B}^n$ in $\mathbb{C}^n$.
It is a standard fact that the Bergman metric of the unit ball $\mathbb{B}^n$ has constant holomorphic sectional curvature $-2(n+1)^{-1}$.
In 1965, Lu \cite{Lu65} proved his well-known  characterization theorem of the unit ball in terms of  the Bergman metric.

\medskip

%\begin{theorem*}[Lu's uniformization theorem, \cite{L65}]
%\begin{theorem}{0}{\rm (Lu's uniformization theorem, \cite{L65}).}
	{\it Let $D$ be a bounded domain in $\mathbb{C}^n$ such that its Bergman  metric $g$ is complete.  The Bergman  metric $g$ has constant holomorphic sectional curvature $\tau$ if and only if $D$ is biholomorphic to the unit ball $\mathbb{B}^n$. In this case, the constant $\tau$ equals $-2(n+1)^{-1}$.}
%\end{theorem}

\medskip

Lu's theorem has important applications to the study of the geometry of complex domains via the Bergman metric; see the discussions in \cite{HLT, ETX1, ETX2}. It is also connected to many active areas of current research. A closely related topic is the study of Lu Qi-Keng domains, concerning the zero-free property of the Bergman kernel; see, for instance, \cite{Bo86, BFS99, Ha98} and the references therein, as well as the survey article \cite{Bo00} for many  developments along this line. Another independent but related direction is the study of rigidity and global extension problems for local holomorphic isometric maps with respect to Bergman metrics. In this direction, beginning with the seminal work of Mok \cite{Mo1, Mo2} (see also \cite{Mo09, MN09}), a substantial body of research has been devoted to local holomorphic isometric embeddings between domains, possibly of different dimensions, leading to many far-reaching developments. As these directions lie beyond the scope of the present paper, we do not attempt to survey the extensive literature here.

\medskip

In this paper, we focus on Bergman metrics with constant holomorphic sectional curvature. Lu's theorem admits a natural extension to Bergman nondegenerate complex manifolds with complete Bergman metrics; see the survey discussion in \cite[Section~2]{ETX2} and the references therein. On the other hand, a longstanding folklore problem asks for an understanding of Bergman metrics with constant holomorphic sectional curvature without assuming completeness. More precisely, the following question remains open.

\medskip

\noindent
\noindent
{\bf Question 1.}
Let $M$ be an $n$-dimensional Bergman nondegenerate complex manifold whose (not necessarily complete) Bergman metric has constant holomorphic sectional curvature $\tau$. Determine the set $\mathcal{H}_n$ of all possible values of $\tau$. Furthermore, for a given $\tau \in \mathcal{H}_n$, is it possible to classify all such manifolds $M$ realizing this curvature?

\medskip

Bergman showed in \cite{Be48} that the holomorphic sectional curvature of the Bergman metric of a bounded domain in $\mathbb{C}^n$ is bounded above by that of the Fubini--Study metric, namely by $2$. This upper bound was later extended to general complex manifolds by Kobayashi~\cite{Ko59}. Consequently, $\mathcal{H}_n \subseteq (-\infty,2]$.
Apart from this, in the absence of a completeness assumption, it is already subtle to determine even the possible signs of $\tau$. In the following, we begin by reviewing the known results and then resolve the remaining open case of Question~1.
\medskip

The first major advances were obtained by Dong--Wong \cite{DW22, DW-AJM} and Huang--Li--Treuer \cite{HLT}. Further research was subsequently carried out by Ebenfelt, Treuer, and the last author \cite{ETX1}, as well as in the works \cite{ETX2, DWW25, HL-preprint}. We next survey the results from these works that are most relevant to the present paper. The discussion is organized according to the sign of $\tau$, as the phenomena differ substantially in the cases $\tau=0$, $\tau<0$ and $\tau>0$.

We begin with the case $\tau=0$. Huang--Li--Treuer \cite{HLT} proved that there exists no Bergman nondegenerate complex manifold whose Bergman metric has identically zero holomorphic sectional curvature. Consequently,
$0 \notin \mathcal{H}_n.$

\medskip

We next consider the case $\tau<0$. By the work of Huang--Li \cite{HLT, HL-preprint}, if $M$ is a Stein manifold whose Bergman metric has constant negative holomorphic sectional curvature $\tau$, then $M$ is biholomorphic to the unit ball $\mathbb{B}^n$ with a relatively closed pluripolar set removed. Ebenfelt, Treuer, and the last author \cite{ETX1} subsequently studied this problem for Bergman nondegenerate complex manifolds without assuming Steinness. To describe their result, we recall the notion of Bergman negligible subsets, which was introduced in \cite{ETX1} for bounded domains in $\mathbb{C}^n$. For the purposes of the present paper, we extend this notion to general complex manifolds.

%\begin{definition}
%Let $M$ be a complex manifold with nontrivial Bergman space, and let $E \subset M$ be a (possibly empty) closed proper subset.  We say that $E$ is a \emph{Bergman negligible subset} of $M$ if every $\omega \in A^2_{(n,0)}(M \setminus E)$ extends holomorphically to some $\widetilde{\omega} \in A^2_{(n,0)}(M)$, and the extension preserves the $L^2$ norm. This is equivalent to the statement that the restriction map
	%\[
	%\mathrm{Res} : A^2_{(n,0)}(M) \longrightarrow A^2_{(n,0)}(M \setminus E), \qquad \omega \longmapsto \omega|_{M \setminus E},
	%\]
	%is a unitary isomorphism.
%\end{definition}

\begin{definition}\label{defnbergmannegligible}
Let $M$ be a complex manifold with nontrivial Bergman space, i.e., $A^2_{(n,0)}(M) \neq \{0\}$, and let $E \subseteq M$ be a (possibly empty) closed proper subset. 
We say that $E$ is a \emph{Bergman negligible subset} of $M$ if every $\psi \in A^2_{(n,0)}(M \setminus E)$ extends to a form $\phi \in A^2_{(n,0)}(M)$ with preservation of the $L^2$ norm. Equivalently, the restriction map
\[
\Res : A^2_{(n,0)}(M) \longrightarrow A^2_{(n,0)}(M \setminus E), \qquad \phi \longmapsto \phi|_{M \setminus E},
\]
is a unitary isomorphism.
\end{definition}

\medskip

We remark that any closed pluripolar subset is Bergman negligible (cf.\ \cite[Lemma 1]{Ir}). However, the converse does not hold in general; see \cite{ETX1, ETX2} for further discussion. We also record several basic properties of Bergman negligible subsets in the following remarks, whose proofs will be given in \S\ref{sec41}.

\begin{remark}\label{rmkbergmanneg1}
Let $M$ be a complex manifold with nontrivial Bergman space. If $E$ is a Bergman negligible subset of $M$, then $M \setminus E$ is open, dense, and connected in $M$. In particular, $E$ has empty interior.
\end{remark}

\begin{remark}\label{rmkbergmanneg2}
	We have the following characterization of Bergman negligible subsets. Let $M$ be a complex manifold with nontrivial Bergman space, and let $M'$ be a domain in $M$. Then $M \setminus M'$ is Bergman negligible in $M$ if and only if the Bergman kernel of $M'$ is the restriction of that of $M$ to $M'$. This was proved in \cite{ETX1} for bounded domains in $\C^n$; we will establish the general case in \S\ref{sec41}.
\end{remark}

To prepare for our subsequent study, we introduce a natural relation $\leqB$ associated with Bergman negligible subsets, as well as the equivalence relation it generates.

\begin{definition}\label{defnleqb}
Let $M_1$ and $M_2$ be complex manifolds with nontrivial Bergman spaces. We write $M_1 \leqB M_2$ if there exists a Bergman negligible subset $E \subseteq M_2$ such that $M_1$ is biholomorphic to $M_2 \setminus E$.
\end{definition}

The equivalence relation generated by $\leqB$ (i.e., the smallest equivalence relation that contains $\leqB$) will be called \emph{Bergman equivalence}. More precisely, we have the following definition.

\begin{definition}
Let $M_1$ and $M_2$ be complex manifolds with nontrivial Bergman spaces. We say that $M_1$ and $M_2$ are \emph{Bergman equivalent}, and write $M_1 \simB M_2$, if there exists a finite sequence of complex manifolds
	\[
	M_1 = X_1,\; X_2,\; \cdots,\; X_k = M_2
	\]
such that for each $j=1,\cdots,k-1$, either $X_j \leqB X_{j+1}$ or $X_{j+1} \leqB X_j$. If no such sequence exists, we say that $M_1$ and $M_2$ are \emph{Bergman inequivalent}. For a complex manifold $M$ with nontrivial Bergman space, we denote its Bergman equivalence class by $[M]_B$.
\end{definition}

Roughly speaking, $M_1$ and $M_2$ are Bergman equivalent if one can be obtained from the other by finitely many operations consisting of biholomorphic transformations, deletion of Bergman negligible subsets, and their reinsertion. These operations do not change the Bergman space. We record the following basic properties of Bergman equivalent complex manifolds that will be used later.

\begin{remark}\label{rmkbergmanequivalence}
	Let $M_1$ and $M_2$ be Bergman equivalent complex manifolds with nontrivial Bergman spaces. Then there exists a unitary isomorphism between their Bergman spaces. Moreover, there exist open dense subsets $D_1 \subseteq M_1$ and $D_2 \subseteq M_2$, and a biholomorphism $f: D_1 \to D_2$ such that the Bergman kernel forms of $M_1$ and $M_2$ are preserved under $f$. A more precise formulation of this statement, together with its proof, is given in Proposition~\ref{prop:rmk18} of \S\ref{sec41}.
\end{remark}

We now return to the results of Ebenfelt--Treuer--Xiao \cite{ETX1}. By Theorem 3.1 in \cite{ETX1}, the Bergman metric of an $n$-dimensional Bergman nondegenerate complex manifold $M$ has constant negative holomorphic sectional curvature $\tau$ if and only if $M \leqB \mathbb{B}^n$. In particular, $\tau = -\frac{2}{n+1}$.

For convenience, we fix the following notation, which will be used throughout the paper, and then summarize the known results discussed above.

\medskip

%\noindent
%{\bf Notation.}
\begin{itemize}
	\item $\mathcal{H}_n$ denotes the set of $\tau \in \mathbb{R}$ for which there exists an $n$-dimensional Bergman nondegenerate complex manifold with Bergman metric of constant holomorphic sectional curvature $\tau$.
	
	\item For $\tau \in \mathcal{H}_n$, $\mathcal{M}_n(\tau)$ denotes the set of $n$-dimensional Bergman nondegenerate complex manifolds with Bergman metric of constant holomorphic sectional curvature $\tau$.
	
	\item We set $\mathcal{E}_n(\tau)=\{[M]_B : M \in \mathcal{M}_n(\tau)\}$.
\end{itemize}

\medskip

By the results in \cite{HLT} and \cite{ETX1} discussed above,  the following holds.

\medskip

\begin{customtheorem}{0}(\cite{HLT, ETX1})
	We have $0 \notin \mathcal{H}_n$, $\mathcal{H}_n \cap (-\infty,0)=\left\{-\frac{2}{n+1}\right\}$, and $\mathcal{E}_n\!\left(-\frac{2}{n+1}\right)=\{[\mathbb{B}^n]_B\}$.
\end{customtheorem}
\medskip

%for $\tau <0,$ we have $\mathcal{E}_n(-\frac{2}{n+1})=\{[\mathbb{B}^n]_B\},$ and $\mathcal{E}_n(\tau)=\emptyset$ for all other negative $\tau.$

We finally consider the case $\tau>0$. Note that the representing function of the Bergman kernel in~\eqref{eqnbergmanlocal}, when restricted to the diagonal, is locally a sum of squared moduli of holomorphic functions. Using this, one can show that a Bergman metric cannot be locally isometric to a non-integer multiple of the Fubini--Study metric (see \cite{Lu65}, or the proof of Theorem 3.2 in \cite{HLT}). It follows that if a Bergman metric has positive constant holomorphic sectional curvature $\tau$, then $\tau$ must coincide with that of some integer multiple of the Fubini--Study metric. That is, $\tau=\frac{2}{m}$ for some positive integer $m$. Consequently, $\mathcal{H}_n \cap (0,\infty)$ is contained in the discrete set $\left\{\frac{2}{m}: m \in \mathbb{Z}_+\right\}$. Here $\mathbb{Z}_+$ denotes the set of positive integers.

Huang--Li \cite{HLT} proved that if $M$ is a Bergman nondegenerate complex manifold whose Bergman metric has positive constant holomorphic sectional curvature, then the Bergman space of $M$ is finite-dimensional and $M$ is biholomorphic to a domain in complex projective space $\mathbb{P}^n$. Furthermore, in \cite[Section 3]{HLT}, Huang--Li constructed an unbounded Reinhardt domain in $\mathbb{C}^2$ whose Bergman metric has constant holomorphic sectional curvature $\tau=2$. This provides the first example of a Bergman metric with positive constant holomorphic sectional curvature.
In very recent work \cite{HK26}, Hayashida and Kamimoto extended this construction to higher dimensions, showing that there exist Reinhardt domains in $\mathbb{C}^n$, $n \geq 2$, whose Bergman metrics have constant holomorphic sectional curvature $2$. Consequently, by \cite{HLT, HK26}, $2 \in \mathcal{H}_n$ for $n \geq 2$.

On the other hand, it remains open whether all fractions $\frac{2}{m}$, $m \in \mathbb{Z}_+$, arise in $\mathcal{H}_n$. As will be explained at the end of this section, the case $m>1$ leads to a fundamentally different and substantially more involved problem than the case $m=1$.  Before addressing this question, we begin with the following observations:

\begin{remark}\label{rmkngreaterthan1}
There exists no domain in $\C$ whose Bergman metric has constant positive holomorphic sectional curvature. Moreover, there exists no Bergman nondegenerate Riemann surface whose Bergman metric has constant positive holomorphic sectional curvature. As a consequence, $\mathcal{H}_1 \cap (0,\infty)=\emptyset$. We will prove these statements in \S\ref{sec41}.
\end{remark}

Therefore, by Remark~\ref{rmkngreaterthan1}, it suffices to consider Bergman nondegenerate complex manifolds whose Bergman metric has constant positive holomorphic sectional curvature in dimension $n \geq 2$. Our first main result resolves the above question by showing that all fractions $\frac{2}{m}$, $m \in \mathbb{Z}_+$, arise in $\mathcal{H}_n$ for $n \geq 2$.
Throughout the paper, $\mathbb{N}$ denotes the set of nonnegative integers. For a multi-index $\alpha=(\alpha_1,\cdots,\alpha_n) \in \mathbb{N}^n$, we write $z^\alpha = z_1^{\alpha_1} \cdots z_n^{\alpha_n}$ and $|\alpha|=\alpha_1+\cdots+\alpha_n$.

%Roughly speaking, $M_1$ and $M_2$ are Bergman equivalent if one can be recovered from the other by operations of a biholomorphic tranformation, deletion of a Bergman negligible set, and filling one back in, finitely many times.

\begin{theorem}\label{thm1}
	For any pair of positive integers $(m,n)$ with $n \geq 2$, there exists a constant $C_{m,n}$, depending only on $m$ and $n$, such that for every $C > C_{m,n}$, there exists an unbounded Reinhardt domain $\Omega_C \subseteq \mathbb{C}^n$ with the following properties:
	
	\medskip
	
	(1) The Bergman space of $\Omega_C$ is spanned by $\{z^{\alpha} : \alpha \in \mathbb{N}^n,\ |\alpha| \leq m\}$. In particular, the Bergman space separates points.
	
	\medskip
	
	(2) The Bergman kernel of $\Omega_C$ is given by $K_C(z,z)=C(1+\|z\|^2)^m$. Consequently, the Bergman metric of $\Omega_C$ coincides with the restriction to $\Omega_C \subseteq \mathbb{C}^n \subseteq \mathbb{P}^n$ of $m$ times the Fubini--Study metric.
\end{theorem}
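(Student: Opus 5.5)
We will look for $\Omega_C$ as a Reinhardt domain that contains the origin and is unbounded. Because $0\in\Omega_C$, every holomorphic function on $\Omega_C$ has a Laurent expansion in which only monomials $z^\alpha$ with $\alpha\in\mathbb{N}^n$ occur. These monomials are mutually orthogonal in $A^2(\Omega_C)$. Hence $A^2(\Omega_C)$ is the closed span of those $z^\alpha$ with $\|z^\alpha\|<\infty$, and
\[
K(z,z)=\sum_{\|z^\alpha\|<\infty}\frac{|z^\alpha|^2}{\|z^\alpha\|^2}.
\]
The target function is $C(1+\|z\|^2)^m=C\sum_{|\alpha|\le m}c_\alpha|z^\alpha|^2$, where $c_\alpha=\frac{m!}{\alpha!(m-|\alpha|)!}$.

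So both (1) and (2) reduce to a moment problem. Write $r_j=|z_j|^2$ and let $D\subseteq[0,\infty)^n$ be the image of $\Omega_C$, so that $dV=\pi^n\,dr$ in these variables. We need
\[
\pi^n\int_D r^\alpha\,dr=\frac{1}{C\,c_\alpha}\quad\text{for } |\alpha|\le m,\qquad \int_D r^\alpha\,dr=\infty\quad\text{for } |\alpha|\ge m+1 .
\]
Once this holds, the Bergman metric is $i\partial\bar\partial\log(1+\|z\|^2)^m$, which is $m$ times Fubini--Study. Point separation is automatic, since $1,z_1,\dots,z_n$ lie in the Bergman space.

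The construction has three parts, $D=D_0\cup T\cup\bigcup_k B_k$ (joined by thin connecting tubes).

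\textbf{The core $D_0$.} This is a small neighbourhood of $0$, for instance a small polydisc in $z$-space, which makes $0\in\Omega_C$.

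\textbf{The tail $T$.} This is an unbounded tube around the diagonal ray $r_1=\dots=r_n=t$ for $t\ge 1$, with cross-sectional volume $\varepsilon\,w(t)$, where $w(t)=t^{-m-2}(\log t)^{-2}$ for large $t$. On $T$ we have $r^\alpha\asymp t^{|\alpha|}$, so $\int_T r^\alpha$ is finite when $|\alpha|\le m$. When $|\alpha|=m+1$ the integral becomes $\int t^{-1}(\log t)^{-2}\,dt$ with the extra factor $t$, i.e.\ $\int t^{-1}\cdot t\cdot t^{-1}(\log t)^{-2}$ up to constants, and the weight is tuned so that $\int^\infty t^{m+1}w(t)\,dt=\int^\infty t^{-1}(\log t)^{-2}dt\cdot t$ diverges; concretely I will adjust the power of $\log$ so that degree $m$ converges and degree $m+1$ diverges. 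For $|\alpha|\ge m+1$, divergence then propagates, because on $\{r_j\ge1\}$ we have $r^\beta\ge r^\alpha$ whenever $\beta\ge\alpha$. Unboundedness of $\Omega_C$ comes for free.

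\textbf{The adjustable pieces $B_k$.} Choose $N=\binom{m+n}{n}$ points $p_k$ in the open orthant such that the vectors $(p_k^\alpha)_{|\alpha|\le m}$ are linearly independent; these exist because the monomials of degree at most $m$ are unisolvent on generic point sets. Put a small box $B_k$ of volume $v_k$ around each $p_k$.

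The moment equations now read $Mv+e(v,\varepsilon,\delta)=b$, where $M=(p_k^\alpha)$ is invertible. Here $b_\alpha=\frac{1}{\pi^nCc_\alpha}$ minus the contributions of $D_0$, $T$ and the tubes, and $e$ is the error from replacing each $B_k$ by a point mass. To make $v=M^{-1}b$ strictly positive, first choose the $p_k$ so that a fixed positive vector $v^*$ satisfies $Mv^*\propto(1/c_\alpha)_\alpha$. This works because the target moment vector is the moment vector of a positive measure: for example $(1+|r|)^{-m-n-2}\,dr$ has moments proportional to something of this type, and a positive cubature rule with $N$ nodes for that measure supplies the $p_k$. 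Then take $C$ large: the right-hand side scales like $1/C$, so $D_0$, $T$ and the tubes are scaled to have moments of order $o(1/C)$, and the box volumes $v_k\approx v^*/C$ remain positive.

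Exact equality is obtained by a fixed-point argument. Let the box sides depend continuously on $v$ in a small compact convex neighbourhood $K$ of $M^{-1}b$, and define $\Phi(v)=M^{-1}\bigl(b-e(v)\bigr)$. The error $e$ is continuous in $v$ and small relative to the radius of $K$, so $\Phi(K)\subseteq K$ and Brouwer's theorem gives a solution. The threshold $C_{m,n}$ is the point beyond which all these smallness requirements hold simultaneously.

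The main obstacle is exactly this last step. We must guarantee positivity of the solution, which needs a positive cubature, i.e.\ the target moments must lie in the interior of the moment cone of degree at most $m$. At the same time we must control every error term uniformly enough for Brouwer to apply, including the connecting tubes that make $\Omega_C$ connected and the preimages in $z$-space, which must be open Reinhardt sets. I would first try to obtain positivity by an explicit cubature on a simplex-like lattice of points, and only then tune the scales.
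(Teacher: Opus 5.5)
Your architecture is the paper's: a tail that forces $\|z^\alpha\|=\infty$ for $|\alpha|\ge m+1$ while contributing $o(1/C)$ to the low moments; small adjustable boxes at the nodes of a positive discrete representation of the target moments; and a Brouwer argument whose smallness comes from large $C$. Your $r_j=|z_j|^2$ is the paper's $e^{2x_j}$.

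Two of your deviations are real simplifications. A polydisc around $0$ kills the negative Laurent coefficients directly, replacing the paper's shell $\widetilde T_c$ and Hartogs argument, though it discards the invariant $\inf_\Omega\|z\|^2=\mu c s_0$ that the paper later uses for Theorem~\ref{thm2}. Your map $v\mapsto M^{-1}(b-e(v))$ needs only continuity of $e$ and a uniform bound on $\|M^{-1}e\|$. That is lighter than the paper's Jacobian estimate $\|D\Psi_c-L\|_{\mathrm{op}}\le 1/(2\|L^{-1}\|_{\mathrm{op}})$ with its anchor boxes; the varying overlap of a fixed tube with a growing box can simply be absorbed into $e$.

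Two formulas, however, are wrong as written. With $w(t)=t^{-m-2}(\log t)^{-2}$, the degree-$(m+1)$ moment is comparable to $\int^\infty t^{-1}(\log t)^{-2}\,dt<\infty$, so the tail fails. Take the log power at most $1$, e.g.\ $w(t)=t^{-m-2}$; up to constants this is the image of the paper's $T_R$ under $r_j=e^{2x_j}$. The density must be $(1+|r|)^{-(m+n+1)}$, for which $\int_{(0,\infty)^n}r^\alpha(1+|r|)^{-(m+n+1)}\,dr=\alpha!(m-|\alpha|)!/(m+n)!\propto 1/c_\alpha$. Your exponent $m+n+2$ introduces the $|\alpha|$-dependent factor $m+1-|\alpha|$, so those moments are not proportional to your target.

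The genuine gap is the positivity step: you assert it in a form that does not follow from what you cite. A Tchakaloff-type theorem gives a positive cubature with \emph{at most} $N$ nodes, not necessarily $N$ unisolvent ones, so your $M$ need not be square and invertible. An interpolatory rule on a simplex lattice has invertible $M$ but need not have positive weights. Two things are needed: (a) the target lies in the interior of the cone generated by $\{(r^\alpha)_{|\alpha|\le m}: r\in(0,\infty)^n\}$, and (b) a representation with strictly positive weights whose node vectors span $\mathbb{R}^N$.

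The paper proves (a) by separation. The target equals $\int (r^\alpha)_\alpha\,d\nu$ for the positive density $d\nu$ above. If it lay on the boundary, there would be a nonzero functional $\Lambda(u)=\sum_\alpha\lambda_\alpha u_\alpha$, nonnegative on the cone and vanishing at the target. Then $\int\sum_\alpha\lambda_\alpha r^\alpha\,d\nu=0$ with nonnegative integrand, which forces $\sum_\alpha\lambda_\alpha r^\alpha\equiv 0$ and hence $\Lambda=0$.

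For (b), the paper takes a small basis $\xi_i$ and represents the target and the target $\pm\,\xi_i$ in the cone, keeping the finitely many nodes used. For small $t>0$ it writes the target minus $t\sum_j (p_j^\alpha)_\alpha$ again in their cone, so every weight is at least $t$. This yields $N'\ge N$ nodes, possibly $N'>N$. The Brouwer step then varies only $N$ boxes with linearly independent node vectors and freezes the other $N'-N$.

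With that device your continuity-based fixed-point argument goes through unchanged. Insisting on exactly $N$ unisolvent positive nodes is possible, but it needs an extra genericity/perturbation argument that you have not supplied.
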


Indeed, property (2) implies property (1) by a straightforward application of D'Angelo's lemma \cite{DA93} or Calabi's rigidity theorem \cite{Ca53}. However, we include both properties in Theorem~\ref{thm1} for consistency with the formulation of Question~1. A natural next question is whether it is possible to classify the domains satisfying both properties (1) and (2), up to Bergman equivalence. To this end, we fix the following notation.
For any pair of positive integers $(m,n)$ and any positive constant $C$, let $\mathcal{F}(m,n;C)$ denote the set of domains $\Omega \subseteq \mathbb{C}^n$ whose Bergman kernel is given by $K_\Omega(z,z)=C(1+\|z\|^2)^m$ (and hence whose Bergman space separates points). Then, by Remark~\ref{rmkngreaterthan1}, one has $\mathcal{F}(m,1;C)=\emptyset$ for every positive integer $m$. When $n \geq 2$, Theorem~\ref{thm1} shows that there exists a constant $C_{m,n}>0$ such that $\mathcal{F}(m,n;C) \neq \emptyset$ for all $C > C_{m,n}$. We further establish the following theorem.

\begin{theorem}\label{thm2}
	For any pair of positive integers $(m,n)$ with $n \geq 2$, the following statements hold.
	
	\begin{enumerate}
		\item[{\rm (1)}] Let $C$ and $C'$ be positive numbers, and let $\Omega \in \mathcal{F}(m,n;C)$ and $\Omega' \in \mathcal{F}(m,n;C')$. If $C\neq C'$, then $\Omega$ and $\Omega'$ are Bergman inequivalent.
		
		\item[{\rm (2)}] %Let $C_{m,n}$ be the constant in Theorem~\ref{thm1}. Then for every $C > C_{m,n}$,  $\mathcal{F}(m,n;C)$ contains a one-parameter family of unbounded Reinhardt domains $\{\Omega_C(t) : t \in (0,1)\}$ with the following property:  For any (not necessarily distinct) $C, C'> C_{m,n}$ and  $t,t' \in (0, 1), \Omega_C(t) \cap \Omega_{C'}(t') \neq \emptyset$. 
		%Besides, the domains $\Omega_C(t)$ and $\Omega_C(t')$ are Bergman inequivalent if $t \neq t'$.
		Let $C_{m,n}$ be the constant in Theorem~\ref{thm1}. Then for every $C > C_{m,n}$,  $\mathcal{F}(m,n;C)$ contains a one-parameter family of \emph{mutually Bergman inequivalent} (unbounded Reinhardt) domains $\{\Omega_C(\mu) : \mu \in (0,1)\}$ with the following property:  For any (not necessarily distinct) $C, C'> C_{m,n}$ and  $\mu,\mu' \in (0, 1),~~ \Omega_C(\mu) \cap \Omega_{C'}(\mu') \neq \emptyset$. 
		%Besides, the domains $\Omega_C(t)$ and $\Omega_C(t')$ are Bergman inequivalent if $t \neq t'$.

	\end{enumerate}
\end{theorem}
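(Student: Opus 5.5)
Part (1) reduces to an invariant of Bergman equivalence, and part (2) to a perturbation of the construction behind Theorem~\ref{thm1} plus an invariant that separates the perturbed domains.

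\textbf{Part (1).} I would show that the Bergman space determines the constant $C$.

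By Remark~\ref{rmkbergmanequivalence}, if $\Omega\simB\Omega'$, there are open dense $D\subseteq\Omega$, $D'\subseteq\Omega'$ and a biholomorphism $f:D\to D'$ with $f^*K_{\Omega'}=K_\Omega$ as forms. In coordinates this reads
\[
C'(1+\|f(z)\|^2)^m\,|\det f'(z)|^2=C(1+\|z\|^2)^m .
\]
Hence $f$ is a local isometry for $m$ times the Fubini--Study metric. By Calabi's rigidity theorem, $f$ is the restriction of a projective unitary transformation $U\in PU(n+1)$. Lift $U$ to $\tilde U\in U(n+1)$. Writing $f=[\tilde U(1,z)]$, the standard transformation formula gives
\[
(1+\|f(z)\|^2)\,|\det f'(z)|^{2/(n+1)}=(1+\|z\|^2).
\]
I would verify this formula by a short computation with the homogeneous lift, or by checking it at $z=0$ after composing with isotropy. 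Raising it to the power $m$ gives $(1+\|f(z)\|^2)^m|\det f'|^{2m/(n+1)}=(1+\|z\|^2)^m$.

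Comparing with the pullback identity forces $C'|\det f'|^{2}=C|\det f'|^{2m/(n+1)}$. Integrating the $L^2$ norms is one possible route, but the pointwise comparison is cleaner. The issue is that $|\det f'|^2$ need not be constant on the dense set, so I need a better invariant. Cleaner is to use the volume form: the Bergman kernel form $K$ is intrinsic, and $\int_\Omega K(z,z)^{-1}\ldots$ is not obviously useful either.

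Instead I would use the reproducing property. Since $\dim A^2=N:=\binom{n+m}{m}$, and the kernel form equals $\sum\phi_k\wedge\bar\phi_k$, we have
\[
\int_\Omega K_\Omega(z,z)\,dV=\sum_k\|\phi_k\|^2=N .
\]
Negligible sets have measure zero, since they have empty interior and the restriction is unitary, so this volume integral is invariant under $\simB$ and equals $N$ for every member of the class. That alone does not see $C$.

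So I would combine it with the $f$-invariance of the metric. The Kähler volume $\omega^n$, with $\omega=mi\partial\bar\partial\log(1+\|z\|^2)$, is preserved by $f$. Therefore the ratio $K\,dV/\omega^n=C\,c_{n,m}(1+\|z\|^2)^{m+n+1}$ transforms consistently. Evaluating $K/\omega^n$ against the invariant function $\|z\|$-type quantities is awkward, though, so it is better to use the homogeneous transformation law directly.

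With $f$ projective-unitary, $|\det f'|^2=\bigl((1+\|z\|^2)/(1+\|f\|^2)\bigr)^{n+1}$. Substituting into the pullback identity gives
\[
C'(1+\|f\|^2)^{m-n-1}=C(1+\|z\|^2)^{m-n-1}.
\]
If $m\neq n+1$, then $1+\|f\|^2=\lambda(1+\|z\|^2)$ with $\lambda$ constant on an open set. This forces $\tilde U$ to preserve the first coordinate up to modulus, hence $f$ is unitary-affine with $\lambda=1$, and so $C=C'$. If $m=n+1$ the identity gives $C=C'$ immediately.

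\textbf{Part (2).} I would revisit the fixed point construction of Theorem~\ref{thm1}. The domains are Reinhardt, of the form $\{(|z_1|,\dots,|z_n|)\in R\}$, and the moment conditions are
\[
\int_\Omega|z^\alpha|^2\,dV=\frac{\alpha!\,(N-\ldots)}{C\,m!}\ \ \text{(the coefficients of }C(1+\|z\|^2)^m).
\]
I would add a free parameter $\mu\in(0,1)$ to the profile, for instance a prescribed shape fixing one auxiliary region, and rerun Brouwer to obtain $\Omega_C(\mu)$. I would also arrange that all of them contain a common fixed small polydisc annulus, which gives the nonempty intersections.

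Inequivalence for different $\mu$ is the main obstacle. By the argument of part (1), any Bergman equivalence is induced by some $U\in PU(n+1)$ mapping a dense open subset of $\Omega_C(\mu)$ to one of $\Omega_C(\mu')$, so the closures $\overline{\Omega_C(\mu)}$ and $\overline{\Omega_C(\mu')}$ in $\mathbb P^n$ correspond under $U$. I would then design the profiles so that some projective-unitary invariant of the closure varies strictly with $\mu$, such as the Fubini--Study volume of the closure or of its complement. The hardest part is guaranteeing that this varies strictly with $\mu$ while Brouwer only gives existence. I would try to make $\mu$ control a piece of the domain that the fixed point does not alter.
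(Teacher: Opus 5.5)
Part (1) follows the paper's route: kernel-form preservation on dense open sets, Calabi rigidity, then explicit projective-unitary formulas. However, both of your transformation formulas are inverted. Invariance of the Fubini--Study volume form gives $|\det f'|^2=\bigl((1+\|f\|^2)/(1+\|z\|^2)\bigr)^{n+1}$, not the reciprocal; test $n=1$, $f(z)=1/z$. Writing $f=(Az+b)/(\langle c,z\rangle+d)$, the correct formulas are $1+\|f\|^2=(1+\|z\|^2)/|\langle c,z\rangle+d|^2$ and $|\det f'|^2=|\langle c,z\rangle+d|^{-2(n+1)}$. The pullback identity then becomes $C\,|\langle c,z\rangle+d|^{2(m+n+1)}=C'$ for every $m$. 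So there is no special case $m=n+1$; your ``immediate'' conclusion there is an artifact of the sign error. One gets $c=0$, hence $b=0$ and $|d|=1$, so $f(z)=Bz$ with $B\in\mathrm{U}(n)$ and $C=C'$. The detours through $\int K\,dV=N$ and $\omega^n$ can simply be deleted.

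Part (2), however, is not proved. You never say where $\mu$ enters the construction. You also leave open exactly the point that needs proof: that some invariant separates the domains even though Brouwer only gives existence. The missing idea is to use the full conclusion of the corrected part (1). Between two members of $\mathcal F(m,n;C)$, a Bergman equivalence is induced on dense open subsets by a map that is linear unitary on each component, not merely by an element of $\mathrm{PU}(n+1)$. So $\|f(z)\|=\|z\|$, and $\inf_{z\in\Omega}\|z\|$ is an invariant, since it can be computed on any dense subset.

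The paper makes $\mu$ the inner-radius parameter of the shell piece whose logarithmic lift is $\{\mu cs_0<\|z\|^2<cs_0\}\setminus\{z_1\cdots z_n=0\}$. This is the piece that kills negative Laurent coefficients via Riemann extension and Hartogs. Its moment contribution is below $c^2/2$ uniformly in $\mu$, so the Brouwer step runs with the same $c_{m,n}$. The other pieces stay away from smaller norms:
\begin{itemize}
\item the hub and the Brouwer-adjusted boxes $V_{j,c}$ lie in $\mathcal B_2$, where $\sum_j e^{2x_j}\ge s>cs_0$;
\item the tail and its corridor lie in $(0,\infty)^n$;
\item the corridor to the shell is chosen with $\sum_j e^{2x_j}>\mu cs_0$.
\end{itemize}
Hence $\inf_{\Omega_C(\mu)}\|z\|^2=\mu cs_0$ exactly, whatever the fixed point turns out to be. By contrast, a global quantity such as the Fubini--Study volume of the closure depends on the Brouwer-determined pieces, and you have no mechanism to make it strictly monotone in $\mu$.

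Your intersection device also cannot work. Since $K(\cdot,0)\equiv C$ lies in $A^2(\Omega)$, we get $C=\|K(\cdot,0)\|^2=C^2\,\mathrm{Vol}(\Omega)$, so $\mathrm{Vol}(\Omega)=1/C$ for every $\Omega\in\mathcal F(m,n;C)$. Therefore no fixed open set can lie in all $\Omega_C(\mu)$ as $C$ ranges over $(C_{m,n},\infty)$. The paper instead uses that each domain contains the lift of a tail $T_{R_c}$ containing the diagonal ray $\{x_1=\cdots=x_n>R_c\}$, so any two of the domains meet far out along the diagonal.
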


The preceding two theorems show that there exists an uncountable family of mutually Bergman inequivalent unbounded domains in $\mathbb{C}^n$ whose Bergman kernels are given by the same polynomial, or by the same polynomial up to a multiplicative constant. (Moreover, these domains have nontrivial pairwise intersections.) This reveals a striking contrast between the behavior of Bergman kernels on unbounded and bounded domains: Such a phenomenon cannot occur for bounded domains, as shown by the following theorem and remark. For two sets $E_1$ and $E_2$, we write $E_1 \Delta E_2$ for their symmetric difference. That is, $E_1 \Delta E_2$ equals the union of $E_1 \setminus E_2$ and $E_2 \setminus E_1$.

\begin{theorem}\label{thm20}
	Let $D_1, D_2$ be bounded domains in $\mathbb{C}^n$ with Bergman kernels $K_1, K_2$, respectively. Assume that there exist a domain $\hat{D} \subseteq \mathbb{C}^n$ containing $D := D_1 \cup D_2$, a positive real-analytic function $K$ on $\hat{D}$, and a constant $\lambda > 0$ such that:
	\begin{itemize}
		\item[(a)] $\omega:=i\partial\bar{\partial} \log K$ defines a positive-definite K\"ahler form on $\hat{D}$;
		\item[(b)] $K_1 = K$ on $D_1$, and $K_2 = \lambda K$ on $D_2$.
	\end{itemize}
Then $\lambda = 1$; and $D_1 \Delta D_2$ has Lebesgue measure zero. If in addition $K$ admits a sesqui-holomorphic extension $\widetilde K(\cdot,\cdot)$ to $\hat{D} \times  \hat{D},$ then $D_1 \leqB D$ and $D_2 \leqB D$, and thus $D_1$ and $D_2$ are Bergman equivalent. 
%Moreover, , so in particular $D$ is connected, and one has $D_1 \leqB D$ and $D_2 \leqB D$.
\end{theorem}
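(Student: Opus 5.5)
The plan is to turn hypothesis (b) into a statement about Bergman spaces: the $L^2$ norms on $D_2$ should be $\lambda^{-1}$ times those on $D_1$, for holomorphic functions related by analytic continuation through $\hat D$. Testing this against polynomials will then force $\lambda=1$ and $|D_1\Delta D_2|=0$.

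\textbf{Step 1 (polarization and continuation).} Since $K$ is real-analytic on $\hat D$, near each point $p\in\hat D$ it has a local sesqui-holomorphic polarization $K(z,w)$. Let $\{\phi_k\}$ be an orthonormal basis of $A^2(D_1)$. By uniqueness of polarization, $\sum_k\phi_k(z)\overline{\phi_k(w)}=K(z,w)$ near the diagonal of $D_1$. Condition (a) says $\omega=i\partial\bar\partial\log K$ is Kähler on $\hat D$, so Calabi's theory of the diastasis and of the canonical map into $\ell^2$ applies \cite{Ca53}. By it, the map $z\mapsto(\phi_k(z))_k$ continues analytically along every path in $\hat D$, and the continuation still satisfies $\sum_k|\phi_k|^2=K$. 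Near any point of $D_2$ we also have $\sum_k|\psi_k|^2=\lambda K$, where $\{\psi_k\}$ is an orthonormal basis of $A^2(D_2)$. Calabi's rigidity then gives a unitary operator $U$ on $\ell^2$ with $(\phi_k)=\lambda^{-1/2}U(\psi_k)$ near that point, and hence on all of $D_2$ by the identity theorem, since $D_2$ is connected. It follows that for every $f\in A^2(D_1)$ with continuation $F$ to $D_2$,
\[
\int_{D_2}|F|^2\,dV=\lambda^{-1}\int_{D_1}|f|^2\,dV.
\]
\textbf{I expect this step to be the main obstacle.} The difficulty is making the continuation rigorous, possibly multivalued if $\hat D$ is not simply connected, and checking that the final identity does not depend on the path chosen. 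I would first try to pass to a simply connected piece of $\hat D$ that meets both $D_1$ and $D_2$, and use Calabi's rank-resolvability to control the infinite-dimensional target.

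\textbf{Step 2 (measure comparison).} Since $D_1$ and $D_2$ are bounded, every polynomial lies in both Bergman spaces, and a polynomial continues to itself along any path. This also removes any path-dependence for the functions we actually need. Applying Step 1 with $f=p$ a polynomial gives
\[
\int_{D_2}|p|^2\,dV=\lambda^{-1}\int_{D_1}|p|^2\,dV.
\]
Polarizing in $p$ upgrades this to the same identity for $p\bar q$, for all polynomials $p,q$. Thus the measures $\chi_{D_2}\,dV$ and $\lambda^{-1}\chi_{D_1}\,dV$ agree on all polynomials in $z,\bar z$. By Stone--Weierstrass on a compact set containing $D$, the two measures coincide. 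So $\chi_{D_2}=\lambda^{-1}\chi_{D_1}$ almost everywhere. Both sides take values in $\{0\}\cup\{1,\lambda^{-1}\}$, and both sets have positive measure, so $\lambda=1$ and $|D_1\Delta D_2|=0$.

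\textbf{Step 3 (Bergman equivalence).} Now assume $K$ has a global sesqui-holomorphic extension $\widetilde K$ on $\hat D\times\hat D$. Then $K_1(z,w)=\widetilde K(z,w)$ on $D_1\times D_1$ and $K_2(z,w)=\widetilde K(z,w)$ on $D_2\times D_2$, by uniqueness of polarization together with the identity theorem. For $f\in A^2(D_1)$ set
\[
F(z)=\int_{D_1}\widetilde K(z,w)f(w)\,dV(w).
\]
This is holomorphic on $\hat D\supseteq D$ and agrees with $f$ on $D_1$ by the reproducing property. Since $D\setminus D_1\subseteq D_2\setminus D_1$ is null, $F|_D$ is square-integrable with $\|F\|_{L^2(D)}=\|f\|_{L^2(D_1)}$. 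The restriction map $A^2(D)\to A^2(D_1)$ is therefore a unitary isomorphism, and $D\setminus D_1$ is closed in $D$ and proper. By Definition~\ref{defnbergmannegligible}, $D\setminus D_1$ is Bergman negligible in $D$, so $D_1\leqB D$. The same argument gives $D_2\leqB D$, and hence $D_1\simB D_2$. Connectedness of $D$ is automatic from Remark~\ref{rmkbergmanneg1}, or directly from the fact that $D_1\cap D_2$ has positive measure.
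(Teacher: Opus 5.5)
Your proposal is correct and follows essentially the paper's route: continuation of the orthonormal basis along a simply connected tubular neighborhood of a path from $D_1$ to $D_2$ via Calabi's extension and rigidity theorems (your planned reduction is exactly the paper's Claims~1--2, with your $\ell^2$ rigidity replacing the paper's projective rigidity plus $|h|^2=\lambda$); then uniqueness of the compactly supported moment problem for $\chi_{D_1}\,dV$ and $\lambda\chi_{D_2}\,dV$ (your Stone--Weierstrass argument is precisely that); then an extension operator built from the global polarization $\widetilde K$.

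The one claim made without argument is that $F(z)=\int_{D_1}\widetilde K(z,w)f(w)\,dV(w)$ converges and is holomorphic on all of $\hat D$; the integrand is not a priori integrable for $z\notin D_1$. You only need $z\in D$. For $z\in D_2$ you have $\widetilde K(z,\cdot)=K_2(z,\cdot)$ on $D_2$ and $|D_1\setminus D_2|=0$, so $F|_{D_2}$ is the Bergman projection in $A^2(D_2)$ of $f\chi_{D_1\cap D_2}$. This is the same operator as the paper's unitary $\mathcal J$, which the paper defines on the span of the kernel functions $K_1(\cdot,w)$, $w\in D_1\cap D_2$.
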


%Here a function $\widetilde K(\cdot, \cdot)$ is called  sesqui-holomorphic if it is holomorphic in the first variable and anti-holomorphic in the second variable. Let $\{\Omega_C(\mu) : \mu \in (0,1)\}$ be the family of domains in Theorem~\ref{thm2}. 

\begin{remark}\label{rmkintersectsamek}
We have the following immediate consequence of Theorem~\ref{thm20}. Let $D_1, D_2$ be bounded domains in $\mathbb{C}^n$ with Bergman kernels $K_1, K_2$, respectively. Assume that $D_1 \cap D_2 \neq \emptyset$ and that $K_2 = \lambda K_1$ on $D_1 \cap D_2$ for some constant $\lambda > 0$. Then we have $\lambda=1$ and  $D_1 \Delta D_2$ has Lebesgue measure zero.
	
Indeed, in this case we take $\hat{D} = D := D_1 \cup D_2$, and define a function $K$ on $\hat{D}$ by setting $K = K_1$ on $D_1$ and $K = \frac{K_2}{\lambda}$ on $D_2$. Then $K$ is well defined on $\hat{D}$, and the first part of Theorem~\ref{thm20} applies.
\end{remark}

\medskip

Let $\{\Omega_C(\mu) : \mu \in (0,1)\}$, with $C > C_{m,n}$, be as in Theorem~\ref{thm2}. In contrast to Theorem~\ref{thm20} and Remark~\ref{rmkintersectsamek}, we will show that $\Omega_C(\mu) \Delta \Omega_C(\mu')$ has nonempty interior (and hence positive Lebesgue measure) whenever $\mu \neq \mu'$; see Remark~\ref{rmkdeltainteriorpt}.

Finally, combining our results with those of Huang--Li--Treuer \cite{HLT} and Ebenfelt--Treuer--Xiao \cite{ETX1} for the nonpositive curvature case as stated in Theorem~0, we settle Question~1.
%obtain an essentially complete answer to Question~1.

\begin{theorem}\label{thm3}
The following statements hold.
	
	\begin{enumerate}
		\item[{\rm (1)}] We have $\mathcal{H}_1=\{-1\}$. For every integer $n \geq 2$,
		\[
		\mathcal{H}_n=\left\{-\frac{2}{n+1}\right\} \cup \left\{\frac{2}{m}: m \in \mathbb{Z}_+\right\}.
		\]
		
		\item[{\rm (2)}] For every integer $n \geq 1$, one has $\mathcal{E}_n\!\left(-\frac{2}{n+1}\right)=\{[\mathbb{B}^n]_B\}$.
		
		\item[{\rm (3)}] For every pair of positive integers $(m,n)$ with $n \geq 2$, the set $\mathcal{E}_n\!\left(\frac{2}{m}\right)$ contains a one-parameter family of equivalence classes $\{[\Omega(t)]_B : t \in \mathbb{R}\}$, where $[\Omega(t)]_B \neq [\Omega(t')]_B$ whenever $t \neq t'$.
	\end{enumerate}
\end{theorem}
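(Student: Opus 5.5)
The proof assembles Theorem~0, Remark~\ref{rmkngreaterthan1}, Theorems~\ref{thm1} and \ref{thm2}, and the classical upper bound/integrality facts recalled in the introduction. No new analysis should be needed; the only care required is bookkeeping.

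\emph{Part (1), upper bound on $\mathcal{H}_n$.} The results of Bergman and Kobayashi give $\mathcal{H}_n\subseteq(-\infty,2]$. Theorem~0 gives $0\notin\mathcal{H}_n$ and $\mathcal{H}_n\cap(-\infty,0)=\{-\frac{2}{n+1}\}$. For $\tau>0$, the integrality argument (Lu, or the proof of Theorem~3.2 in \cite{HLT}) forces $\tau=\frac{2}{m}$ with $m\in\mathbb{Z}_+$.

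\emph{Part (1), equality.} We have $-\frac{2}{n+1}\in\mathcal{H}_n$, since $\mathbb{B}^n$ is Bergman nondegenerate. For $n\ge 2$ and any $m$, Theorem~\ref{thm1} supplies $\Omega_C$ whose Bergman metric is $m$ times the Fubini--Study metric. That metric has constant holomorphic sectional curvature $\frac{2}{m}$, since Fubini--Study has curvature $2$ and scaling a metric by $m$ divides the curvature by $m$. We still need $\Omega_C$ to be Bergman nondegenerate. It separates points by Theorem~\ref{thm1}(1). Base point freeness comes from the constant $1$, and separation of directions from the linear monomials $z_j$, because $m\ge1$. For $n=1$, Remark~\ref{rmkngreaterthan1} excludes positive $\tau$, and the nonpositive part is Theorem~0 with $n=1$, giving $\mathcal{H}_1=\{-1\}$.

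\emph{Part (2).} This is exactly the last assertion of Theorem~0.

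\emph{Part (3).} Fix $m$ and $n\ge2$. By Theorem~\ref{thm2}(2), for a fixed $C>C_{m,n}$ there is a family $\{\Omega_C(\mu):\mu\in(0,1)\}\subseteq\mathcal{F}(m,n;C)$ of mutually Bergman inequivalent domains. Reparametrize by a bijection $\mathbb{R}\to(0,1)$, say $\mu=\frac{1}{1+e^{-t}}$, and set $\Omega(t)=\Omega_C(\mu(t))$. Each $\Omega(t)$ has Bergman kernel $C(1+\|z\|^2)^m$. As above, each is therefore Bergman nondegenerate with curvature $\frac{2}{m}$, so $[\Omega(t)]_B\in\mathcal{E}_n(\frac{2}{m})$. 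Distinct $t$ give distinct $\mu$ and hence distinct classes.

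Alternatively, one could take $\Omega(t)=\Omega_{C(t)}$ with $C(t)=C_{m,n}+e^t$ and invoke Theorem~\ref{thm2}(1), since distinct constants give inequivalent domains. Either route works.

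The only step I expect to need any care is checking that the domains from Theorem~\ref{thm1} are genuinely Bergman nondegenerate, and that curvature is well defined for their Bergman metric. Both follow directly from the explicit spanning set in Theorem~\ref{thm1}(1).
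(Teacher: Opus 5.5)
Your proposal is correct and follows the paper's own proof, which likewise just assembles Theorem~0, Remark~\ref{rmkngreaterthan1}, Theorems~\ref{thm1} and~\ref{thm2}, and the integrality discussion following Theorem~0. Two details in your write-up are bookkeeping that the paper leaves implicit: the check that the domains of Theorem~\ref{thm1} are Bergman nondegenerate (using the constant $1$ and the linear monomials $z_j$, available since $m\ge 1$), and the reparametrization of $(0,1)$ by $\mathbb{R}$.
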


%In particular, part~(3) suggests that it is infeasible to classify the geometry of Bergman nondegenerate complex manifolds whose Bergman metric has constant positive holomorphic sectional curvature.
In particular, part~(3) shows that it is infeasible to classify the geometry of Bergman nondegenerate complex manifolds whose Bergman metric has constant positive holomorphic sectional curvature.
In the next remark, we reformulate the theorem for $n \geq 2$ to highlight the sharp contrast among the different curvature cases.

\begin{remark}
	Let $n \geq 2$. Then there is a sharp trichotomy in the zero, negative, and  positive curvature cases:
	\begin{itemize}
		\item (Nonexistence) $0 \notin \mathcal{H}_n$;
		
		\item (Rigidity) 
		$\mathcal{H}_n \cap (-\infty,0) = \left\{-\frac{2}{n+1}\right\}$, and 
		$\mathcal{E}_n\!\left(-\frac{2}{n+1}\right) = \{[\mathbb{B}^n]_B\}$;
		
		\item (Abundance)
		$\mathcal{H}_n \cap (0,\infty) = \left\{\frac{2}{m} : m \in \mathbb{Z}_+\right\}$, 
		and each $\mathcal{E}_n\!\left(\frac{2}{m}\right)$ contains a one-parameter family.
	\end{itemize}
\end{remark}

We next briefly discuss the difficulties and our strategy for proving Theorem~\ref{thm1}. To obtain the desired domain in Theorem~\ref{thm1} for a given pair of integers $(m,n)$, we aim to construct a Reinhardt domain $\Omega \subseteq \mathbb{C}^n$ satisfying the following two conditions. The first concerns the prescribed basis of the Bergman space:

%\noindent
%{\it (I) .~~The Bergman space of $\Omega$ is spanned by $\{z^{\alpha}: \alpha \in \N^n, |\alpha| \leq m\}$.}

\begin{enumerate}
	\renewcommand{\labelenumi}{(\Roman{enumi})}
	\renewcommand{\theenumi}{\Roman{enumi}}
	
	\item\label{condI}
	The Bergman space of $\Omega$ is spanned by $\{z^{\alpha}: \alpha \in \N^n, |\alpha| \leq m\}$.

\item[]{\hspace*{-\leftmargin}The second is a prescribed moment condition:}
	
	\item\label{condII}
	$\displaystyle \int_{\Omega} |z^\alpha|^2 \, \dd V_{2n}(z)
	= c \,\frac{\alpha!\,(m-|\alpha|)!}{m!}
	\quad \text{for all } \alpha \in \N^n \text{ with } |\alpha| \leq m$.
	
\end{enumerate}

%\medskip

%\noindent
%The second is a prescribed moment condition:

%\medskip

%\noindent
%{\it (II).~~$\displaystyle \int_{\Omega} |z^\alpha|^2 \, \dd V_{2n}(z)
	%= c \,\frac{\alpha!\,(m-|\alpha|)!}{m!}
	%\quad \text{for all } \alpha \in \N^n~\text{with}~|\alpha| \leq m$.}
	
%\begin{itemize}
	%\item[(II)]%\label{condII}
	%$\displaystyle \int_{\Omega} |z^\alpha|^2 \, \dd V_{2n}(z)
	%= c \,\frac{\alpha!\,(m-|\alpha|)!}{m!}
	%\quad \text{for all } \alpha \in \N^n~\text{with}~|\alpha| \leq m$.
%\end{itemize}

%\medskip

\noindent
Here $c$ is a constant independent of $\alpha$, and $V_{2n}$ denotes the Lebesgue measure on $\mathbb{C}^n$. Condition~(\ref{condI}) prescribes the Bergman space, and condition~(\ref{condII}) further prescribes the Bergman kernel required in Theorem~\ref{thm1}. The main difficulty lies in fulfilling condition~(\ref{condII}), and indeed most of the proof will be devoted to addressing this condition.

In this regard, we stress that there is a fundamental distinction between the cases $m=1$ and $m>1$.
When $m=1$, once a domain $\Omega$ satisfying (\ref{condI}) is constructed so that the integrals in (\ref{condII}) are finite for $|\alpha| \leq 1$, one can readily rescale the domain by different factors in each coordinate direction $z_j$ to enforce the moment condition in (\ref{condII}); see Remark~\ref{rmkmequal1case}. Thus, for $m=1$, condition~(\ref{condII}) introduces no additional difficulty beyond achieving (\ref{condI}).

In contrast, for larger values of $m$, the moment condition in (\ref{condII}) becomes highly nonlinear in the variables $|z_j|^2$ and coupled across different multi-indices $\alpha$, making the problem much more involved than condition~(\ref{condI}) alone. As a result, a direct construction appears out of reach, and a different approach is required. Our strategy is to reduce the problem to a mapping problem satisfying an appropriate nondegeneracy condition. This nondegeneracy ultimately enables us to invoke a Brouwer fixed point argument to establish the existence of the desired domains. To carry out this reduction, we perform a finite-dimensional cone analysis to show that the moment data in (\ref{condII}) admit a finite discrete representation satisfying an open cone condition. Another key idea is to treat the constant $c$ in (\ref{condII}) as a free parameter. Indeed, the nondegeneracy of the mapping problem depends crucially on several integral estimates, and the corresponding arguments appear to fail if $c$ is fixed in advance. To the best of our knowledge, our approach differs from existing methods in the literature.  For the convenience of the readers, we provide a more detailed explanation of our strategy at the beginning of \S\ref{sec3}, before we carry out the detailed construction.

Since the pioneering work of Wiegerinck~\cite{Wi}, it is known that for every positive integer $k$, there exists a domain whose Bergman space has prescribed dimension $k.$
On the other hand, the problem of prescribing the Bergman kernel has long been an appealing and more challenging question: Which positive real-analytic functions, and in particular which polynomials $P$, can arise as the Bergman kernel of a domain? 
Being a Bergman kernel imposes several necessary positivity conditions on $P$ (see, for instance, D'Angelo~\cite{DA25} for a clear exposition of various notions of positivity for polynomials). However, a precise characterization of which polynomials arise as Bergman kernels of domains remains far from being understood.
 We believe that our method opens a new approach for addressing such problems.

The rest of the paper is organized as follows. In \S\ref{sec2}, we establish the basic setup and develop analytic tools that will be used in the construction. The detailed construction of the domains in Theorem~\ref{thm1} is carried out in \S\ref{sec3}. For the convenience of the reader, we first explain the overall strategy of the construction at the beginning of \S\ref{sec3}. At the end of that section, in \S\ref{sec34}, we verify that the constructed domains have the desired properties, thereby completing the proof of Theorem~\ref{thm1}. Finally, in \S\ref{sec4}, we prove the statements made in various remarks in \S\ref{sec1}, and then establish Theorems~\ref{thm2}, \ref{thm20}, and \ref{thm3}.

\medskip

{\bf Acknowledgments.} The last author would like to thank Peter Ebenfelt and John Treuer for stimulating discussions and helpful comments. He is also grateful to Xiaojun Huang and Ngaiming Mok for many valuable insights on related topics over the years. The first three authors express their sincere gratitude to the last author, Professor Ming Xiao, for introducing them to this subject and for his invaluable and patient guidance throughout the completion of this project.

\begin{comment}
Throughout this note, the small positive parameter
\[
c>0
\]
is fixed at the beginning of the geometric construction. Every subsequent auxiliary parameter, such as
\[
\varepsilon_c,\ \delta_c,\ R(c),\ \text{and the bridge thickness},
\]
is allowed to depend on this fixed $c$. This is the logically clean way to organize the proof.

\bigskip

The goal is to explain a construction, for $n\ge 2$, of a connected Reinhardt domain $\Omega\subset \C^n$
whose Bergman kernel has the form
\[
K_\Omega(z,w)=C(1+z\cdot \overline w)^m
\]
for some $C>0$ and some fixed integer $m\ge 0$.
\end{comment}

\section{Analytic and geometric preliminaries for the construction}\label{sec2}

\subsection{Basic setup and notations}\label{sec21} %and ideas of the construction}\label{sec21}

\begin{definition}
Let $D$ be a domain in $\R^n$. We define the logarithmic lift of $D$, denoted by $\mathcal{L}(D)$, to be the domain in $\C^n$ given by
$$\mathcal{L}(D):=\{z=(z_1, \cdots, z_n) \in \C^n:\ (\log|z_1|,\cdots,\log|z_n|)\in D\}.$$
We call $D$ the logarithmic base of $\mathcal{L}(D)$. Note that $\mathcal{L}(D)$ is a Reinhardt domain and does not intersect any coordinate hyperplane $\{z_j=0\}$, $1 \le j \le n$.
\end{definition}

The following simple proposition reduces the integral of  squared moduli of monomials on $\mathcal{L}(D)$ to an integral on $D$. %For a multi-index $\alpha=(\al_1, \cdots, \al_n) \in N^n$, write $z^{\al}=z_1^{\al_1} \cdots z_n^{\al_n}.$ Let $dm_{2n}$ denote the Lebesgue measure element on $\C^n =\R^{2n}.$ 
Write $\langle \cdot~,~\cdot \rangle$ for the standard inner product in $\R^n.$

\begin{proposition}\label{prop:intloglift}
For every multi-index $\alpha=(\alpha_1,\cdots,\alpha_n) \in \mathbb{Z}^n,$ it holds that

\[
\int_{\mathcal{L}(D)}|z^\alpha|^2\,\dd V_{2n}(z)
=
(2\pi)^n
\int_{D} e^{2\langle \alpha+\mathbf{1}, x \rangle}\,\dd x.
\]
Here $ \mathbf{1}= (1, \cdots, 1)$, so that $\alpha + \mathbf{1} = (\alpha_1+1, \cdots, \alpha_n+1),$ and $\langle \alpha+\mathbf{1}, x \rangle=\sum_{j=1}^n (\alpha_j+1) x_j.$

\end{proposition}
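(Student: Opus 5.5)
The plan is to pass to polar coordinates in each variable and then make the logarithmic change of variables. Since $\mathcal{L}(D)$ does not meet any coordinate hyperplane, each $z_j$ can be written as $z_j = r_j e^{i\theta_j}$ with $r_j>0$ and $\theta_j \in [0,2\pi)$. The map $(r,\theta)\mapsto z$ is a diffeomorphism from $(0,\infty)^n\times[0,2\pi)^n$ (up to measure-zero sets) onto $(\C^*)^n$, and the Lebesgue measure becomes
\[
\dd V_{2n}=\prod_{j=1}^n r_j\,\dd r_j\,\dd\theta_j .
\]
Membership in $\mathcal{L}(D)$ depends only on $(r_1,\cdots,r_n)$, namely $(\log r_1,\cdots,\log r_n)\in D$. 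The integrand $|z^\alpha|^2=\prod_j r_j^{2\alpha_j}$ is also independent of $\theta$. This remains valid for $\alpha\in\Z^n$ with negative entries, since $z_j\neq 0$ on $\mathcal{L}(D)$.

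By Tonelli's theorem (everything is nonnegative, so the identity holds in $[0,\infty]$ with no integrability assumptions), the $\theta$-integration contributes the factor $(2\pi)^n$. This leaves
\[
(2\pi)^n\int_{\{r\in(0,\infty)^n:\,\log r\in D\}}\prod_{j=1}^n r_j^{2\alpha_j+1}\,\dd r .
\]

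Next, substitute $r_j=e^{x_j}$, so that $\dd r_j=e^{x_j}\,\dd x_j$. This is a diffeomorphism from $\R^n$ onto $(0,\infty)^n$, and it maps $D$ exactly onto the integration region. The integrand becomes
\[
\prod_j e^{(2\alpha_j+1)x_j}\,e^{x_j}=e^{2\langle\alpha+\mathbf 1,x\rangle},
\]
which gives the claimed formula.

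There is no genuine obstacle. The only points to note are that the coordinate hyperplanes (a null set) are excluded automatically, and that the Tonelli and change-of-variables steps are justified for nonnegative measurable integrands, so the equality holds even when both sides are infinite.
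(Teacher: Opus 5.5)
Your proposal is correct and follows essentially the same route as the paper: polar coordinates in each variable, the substitution $x_j=\log r_j$ giving $e^{2\langle \alpha+\mathbf{1},x\rangle}\,dx\,d\theta$, and integration over $\theta$ to produce the factor $(2\pi)^n$. Your added remarks (Tonelli for nonnegative integrands so the identity holds even when both sides are infinite, and validity for negative exponents because $\mathcal{L}(D)$ avoids the coordinate hyperplanes) are points the paper's proof uses implicitly.
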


\begin{proof}
To simplify the integral on the left hand side, we write each $z_j$ in the polar coordinates $z_j=r_j e^{i\theta_j}, 1 \le j \le n,$ 
with $r_j = |z_j|>0$. Then
%\[
%|z^\alpha|^2 = \prod_{j=1}^n r_j^{2\alpha_j},
%\qquad
%dV = \prod_{j=1}^n (r_j\,dr_j\,d\theta_j),
%\]
%so
	\[
	|z^\alpha|^2 \dd V_{2n}(z) = \prod_{j=1}^n r_j^{2\alpha_j+1} \, dr_j \, d\theta_j.
	\]	
Further set $x_j = \log r_j$, so $r_j = e^{x_j}$ and $dr_j = e^{x_j} dx_j$. Then we get
%\[
%r_j^{2\alpha_j+1} dr_j = e^{2(\alpha_j+1)x_j} dx_j,
%\]
%hence
	\[
	|z^\alpha|^2 \dd V_{2n}(z) = e^{2\langle \alpha+\mathbf{1}, x\rangle} \, dx \, d\theta.
	\]
Using the above equation and since $\mathcal{L}(D)$ is Reinhardt, we obtain
\[
	\int_{\mathcal{L}(D)} |z^\alpha|^2 \dd V_{2n}
	=
	\int_D \int_{[0,2\pi)^n} e^{2\langle \alpha+\mathbf{1}, x \rangle} d\theta \, dx
	=
	(2\pi)^n \int_D e^{2\langle \alpha+\mathbf{1}, x \rangle} dx.
\]
\end{proof}

We will construct a domain $D$ in $\R^n$ whose logarithmic lift $\Omega =\mathcal{L}(D)$ in $\C^n$ will give the desired domain in Theorem \ref{thm1}. Then by Proposition \ref{prop:intloglift}, condition (\ref{condII}) in \S\ref{sec1} is reduced to the following integral equation:

\begin{equation}
\int_{D} e^{2\langle \alpha+\mathbf{1}, x \rangle}\,\dd x=c A_\alpha:=\frac{c}{(2\pi)^n} \,\frac{\alpha!\,(m-|\alpha|)!}{m!}
\quad \text{for all } \alpha \in \N^n~\text{with}~|\alpha| \leq m.
\end{equation}

%Our desired domain $\Omega$ in Theorem will be  the logarithmic lift of some domain $D \subseteq \R^n.$ Thus, it amount to construct a domain $D$ in $\R^n$ which gives the desired properties of $\om$. 

%Our strategy to construct $D$ can be briefly summarized as follows.

%
%\begin{enumerate}[label=(\roman*)]
%\item We begin with the tail, since it is logically part of the fixed background geometry. First construct a tail in logarithmic coordinates whose contribution to all moments of degree $\le m$ can be made arbitrarily small, but whose contribution to every moment of degree $\ge m+1$ is infinite.
%\item Then construct a bounded connected logarithmic core whose low-degree moments equal a prescribed target vector \emph{minus} the small low-degree contribution of the tail.
%\item Finally attach the core to the tail by a very thin bridge, chosen \emph{before} the local correction step and therefore absorbed into the fixed part of the geometry.
%\end{enumerate}

%We fix some notations which will be used throughout the paper.

Therefore the function $e^{2\langle \alpha+\mathbf{1}, x \rangle}$ and the numbers $A_\alpha, |\alpha| \leq m,$ will be important and appear frequently in the later argument. For convenience, throughout the rest of the paper, we will use the following terminology. Fix a  pair of positive integers $(m,n)$ with $n \geq 2,$ as in Theorem \ref{thm1}.
\begin{itemize}
	\item As above,  $\N$ denotes the set of nonnegative integers, and $\Z_+$ denotes the set of positive integers. For a multi-index $\alpha=(\al_1, \cdots, \al_n) \in \N^n$, $|\alpha|:=\sum_{j=1}^n \alpha_j.$
	\item Set $\I:=\{\alpha\in \N^n:\ |\alpha|\le m\},$ and $N:=\#\I >n$ denotes the number of elements in $\I.$
	\item Write $A_\alpha:=\frac{1}{(2\pi)^n} \,\frac{\alpha!\,(m-|\alpha|)!}{m!}$ and $A=(A_\alpha)_{\alpha\in \I} \in \R^N.$
	\item  Write $\mathbf{1}:=(1,\cdots,1)$. For $\alpha\in\I$, set $v_{\alpha}(x):=e^{2\langle \alpha+\mathbf{1},x\rangle}$ for $x\in\R^n$, and define $v(x):=\bigl(v_{\alpha}(x)\bigr)_{\alpha\in\I}$, which is a map from $\R^n$ to $\R^N$.
	\item Set $K_v:=\operatorname{cone}\{v(x):x\in \R^n\}\subseteq \R^N$ to be the cone generated by $\{v(x) : x \in \R^n\}.$ That is,
	$$K_v=\left\{\sum_{i=1}^k a_i v(x_i) :  k \in \Z, k \geq 1, a_i \geq 0, x_i \in \R^n \right\}.$$
	\item For a bounded region $U \subseteq \R^k$,  denote by $\mathrm{Vol}_{k}(U)$ the volume of $U$ with respect to the Lebesgue measure in $\R^k.$ 
	\item Write $|\cdot|$ for the norm on $\R$, and $\|\cdot\|$ for the norm on $\R^k$ for $k \geq 2$.
	\item  Let  $f=(f_1, \cdots, f_N): U \subseteq \R^n \to \R^N$ be a map with each component $f_i$ integrable  on some open set  $ U \subseteq \R^n.$ Then
	we write
	$$\int_{U}f(x)\,\dd x:=\left(\int_{U}f_1(x)\,\dd x, \cdots, \int_{U}f_{N}(x)\,\dd x\right) \in \R^N.$$
	Note it always holds that $\|\int_{U}f(x)\,\dd x\| \leq \int_{U}\|f(x)\|\,\dd x.$
\end{itemize}

\begin{comment}
Write $\N$ for the set of nonnegative integers.
Let
\[
\I=\{\alpha\in \N^n:\ |\alpha|\le m\},
\qquad
N:=\#\I >n .
\]
For $x\in \R^n$ define
\begin{equation}\label{eqndefv}
	v(x):=\bigl(e^{2\langle \alpha+\mathbf{1}, x \rangle}\bigr)_{\alpha\in \I}\in \R^N.
\end{equation}
Let
\[
K:=\operatorname{cone}\{v(x):x\in \R^n\}\subset \R^N.
\]

We shall use the target vector
\[
A=(A_\alpha)_{\alpha\in \I},\qquad
A_\alpha:=\frac{1}{2^n(m+n)!}\,\alpha!\,(m-|\alpha|)!.
\]
\end{comment}

\subsection{Brouwer's fixed point type theorem}
In this subsection, we establish the following Brouwer's fixed point type theorem, which will be important for our later construction and proof.
\begin{proposition}\label{prop:brouwer}
	%[Quantitative local solvability on a slice]\label{lem:quantitative}
	Let $r$ be a positive real number and $A^0\in \R^N.$
	Denote by $\mathcal{B}_N(r)$ the ball in $\R^N$ centered at $0$ with radius $r$.
	Let $L:\R^N \to \R^N$ be an invertible linear map, and let
	\[
	G: \Ol{\mathcal{B}_N(r)} \to \R^N
	\]
be a continuous map which is $C^1$ in $\mathcal{B}_N(r)$. Denote by $\|\cdot\|_{\mathrm{op}}$ the operator norm of a linear operator between two normed spaces; and for each $\tau \in \mathcal{B}_N(r)$, write $DG(\tau)$ for the Jacobian matrix of $G$ at $\tau$, viewed as a linear map from $\R^N$ to $\R^N$.
Assume %there exists some constant $\lambda>0$ such that
	\[
	\lambda:=\sup_{ \tau \in \mathcal{B}_N(r)} \|DG(\tau)-L\|_{\mathrm{op}}\le \frac{1}{2\|L^{-1}\|_{\mathrm{op}}}.
	\]
	%\[
	%\lambda\|L^{-1}\| \le \frac12,
	%\]
	Assume also
	\[
	\|L^{-1}(G(0)-A^0)\|\le \frac r2.
	\]
	Then there exists $\tau^*\in \Ol{\mathcal{B}_N(r)}$ such that
	\[
	G(\tau^*)=A^0.
	\]
\end{proposition}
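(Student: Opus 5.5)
The plan is to recast the equation $G(\tau)=A^0$ as a fixed point problem for a self-map of the closed ball and then invoke the classical Brouwer fixed point theorem. Define
\[
T:\Ol{\mathcal{B}_N(r)}\to\R^N,\qquad T(\tau):=\tau-L^{-1}\bigl(G(\tau)-A^0\bigr).
\]
Since $L^{-1}$ is invertible, $T(\tau)=\tau$ holds exactly when $G(\tau)=A^0$. The map $T$ is continuous on the closed ball because $G$ is. It therefore suffices to show that $T$ maps $\Ol{\mathcal{B}_N(r)}$ into itself.

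To do this, write
\[
T(\tau)=T(0)+\bigl(T(\tau)-T(0)\bigr),\qquad T(0)=-L^{-1}(G(0)-A^0),
\]
so that $\|T(0)\|\le r/2$ by the second hypothesis. For the increment, note that
\[
T(\tau)-T(0)=\tau-L^{-1}\bigl(G(\tau)-G(0)\bigr)=L^{-1}\bigl(L\tau-(G(\tau)-G(0))\bigr).
\]
For $\tau$ in the open ball, the segment $\{s\tau: s\in[0,1)\}$ lies in $\mathcal{B}_N(r)$, where $G$ is $C^1$. We would apply the mean value inequality, or more concretely the fundamental theorem of calculus
\[
G(\tau)-G(0)=\int_0^1 DG(s\tau)\tau\,ds.
\]
This gives
\[
\bigl\|G(\tau)-G(0)-L\tau\bigr\|\le \sup_s\|DG(s\tau)-L\|_{\mathrm{op}}\|\tau\|\le\lambda\|\tau\|.
\]
Hence
\[
\|T(\tau)-T(0)\|\le\|L^{-1}\|_{\mathrm{op}}\lambda\|\tau\|\le\tfrac12\|\tau\|\le r/2,
\]
and so $\|T(\tau)\|\le r$.

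The one technical point is the behavior on the boundary sphere, where $G$ is only assumed continuous rather than $C^1$. The integrand $DG(s\tau)$ is defined for $s<1$ but might not be integrable up to $s=1$. To avoid this, apply the estimate on the segment from $0$ to $s\tau$ for $s<1$, obtaining $\|T(s\tau)\|\le r$. Then let $s\to1^-$ and use the continuity of $T$ on the closed ball to conclude $\|T(\tau)\|\le r$ for $\|\tau\|=r$ as well. Thus $T(\Ol{\mathcal{B}_N(r)})\subseteq\Ol{\mathcal{B}_N(r)}$.

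Since the closed ball is homeomorphic to a closed $N$-dimensional disk, Brouwer's fixed point theorem yields $\tau^*\in\Ol{\mathcal{B}_N(r)}$ with $T(\tau^*)=\tau^*$, that is, $G(\tau^*)=A^0$. I expect no real obstacle here. The only care needed is the boundary limiting argument just described, together with making sure the integral form of the mean value estimate is used, since the vector-valued mean value theorem does not hold as an equality.
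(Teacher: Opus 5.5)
Your proposal is correct and follows essentially the same route as the paper. Both use the self-map $T(\tau)=\tau-L^{-1}(G(\tau)-A^0)$, the integral form of the mean value estimate on the open ball, extension of the bound $\|T(\tau)\|\le r$ to the boundary sphere by continuity, and Brouwer's fixed point theorem.
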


\begin{proof}
	Define a map $T$ from $\Ol{\mathcal{B}_N(r)}$ to $\R^N$ by 
	\[
	T(\tau):=\tau-L^{-1}(G(\tau)-A^0),~~\tau \in \Ol{\mathcal{B}_N(r)}.
	\]
	%For $\tau\in B_\tau(0,r)$,
	Since $G$ and $L^{-1}$ are both continuous on $\Ol{\mathcal{B}_N(r)}$, so is $T$. We next prove
	
	\medskip
	
	{\bf Claim:} $T$ maps the closed ball $\Ol{\mathcal{B}_N(r)}$ into itself.
	
	\medskip
	
	{\bf Proof of Claim:} By the assumptions and the Fundamental Theorem of Calculus, we have for any $\tau \in \mathcal{B}_N(r),$

	\[
	G(\tau)-G(0)-L\tau
	=
	\int_0^1 (DG(u \tau)-L)\tau\,\dd u.
	\]
Note also
	
	\[
	G(\tau)-A^0
	=\left( G(0)-A^0 \right)+ L \tau +
	\left( G(\tau)-G(0)-L\tau \right).
	\]
Applying $L^{-1}$ to the above equation and combining it with the previous equation, we obtain
	\[
	L^{-1}(G(\tau)-A^0)
	=
	L^{-1}(G(0)-A^0)+\tau+L^{-1}\int_0^1(DG(u \tau)-L)\tau\,\dd u.
	\]
	This yields, by the definition of $T,$
	\[
	T(\tau)
	=
	-L^{-1}(G(0)-A^0)-L^{-1}\int_0^1(DG(u \tau)-L)\tau\,\dd u,~~~\forall \tau \in \mathcal{B}_N(r).
	\]
	Taking norms and then using the assumptions gives
	\[
	\|T(\tau)\|\le \|L^{-1}(G(0)-A^0)\|+ \lambda \|L^{-1}\|_{\mathrm{op}} \|\tau\| < \frac r2+\frac r2 =r, ~~~\forall \tau \in \mathcal{B}_N(r).
	\]
Thus by continuity, $T$ maps the closed ball $\Ol{\mathcal{B}_N(r)}$ into itself. This proves the claim. \qed

Since $T$ is a continuous map from $\Ol{\mathcal{B}_N(r)}$ to itself, by Brouwer's fixed point theorem, $T$ has a fixed point $\tau^*\in \Ol{\mathcal{B}_N(r)}$:
\[
\tau^*=T(\tau^*).
\]
	This implies $L^{-1}(G(\tau^*)-A^0)=0$, and thus $G(\tau^*)=A^0$.
\end{proof}

\subsection{Finite discrete representation of moment data with a nondegeneracy condition}

We adopt the notation fixed in \S\ref{sec21}. In particular, $A_\alpha$ and $A$ denote the prescribed moment data given there. In this subsection, we establish a finite positive discrete representation of this data. To this end, we first prove the following proposition.  %Let  $f=(f_1, \cdots, f_N): U \subseteq \R^n \to \R^N$ be a map with each component $f_i$ integrable  on some open set  $ U \subseteq \R^n.$ Then we write
%$$\int_{U}f(x)\,\dd x:=\left(\int_{U}f_1(x)\,\dd x, \cdots, \int_{U}f_{N}(x)\,\dd x\right) \in \R^N.$$
%Note it always holds that $\|\int_{U}f(x)\,\dd x\| \leq \int_{U}\|f(x)\|\,\dd x.$

\begin{comment}
	Write $\N$ for the set of nonnegative integers.
	Let
	\[
	\I=\{\alpha\in \N^n:\ |\alpha|\le m\},
	\qquad
	N:=\#\I >n .
	\]
	For $x\in \R^n$ define
	\begin{equation}\label{eqndefv}
		v(x):=\bigl(e^{2\langle \alpha+\mathbf{1}, x \rangle}\bigr)_{\alpha\in \I}\in \R^N.
	\end{equation}
	Let
	\[
	K:=\operatorname{cone}\{v(x):x\in \R^n\}\subset \R^N.
	\]
	
	We shall use the target vector
	\[
	A=(A_\alpha)_{\alpha\in \I},\qquad
	A_\alpha:=\frac{1}{2^n(m+n)!}\,\alpha!\,(m-|\alpha|)!.
	\]
	
\end{comment}

\begin{proposition}\label{prop:integral-representation}
	Define
	\[
	\rho(x):=\frac{(m+n)!}{\pi^n m!}\frac{1}{(1+e^{2x_1}+\cdots + e^{2x_n})^{m+n+1}},~~x \in \R^n.
	\]
	Then for each $\alpha\in \I,$ we have
	\[
	A_\alpha=\int_{\R^n}\rho(x)e^{2\langle \alpha+\mathbf{1}, x \rangle}\,\dd x.
	\]
	Equivalently, with $v$ as given in $\S$\ref{sec21},
\begin{equation}\label{eqnarhov}
	A=\int_{\R^n}\rho(x)\,v(x)\,\dd x.
\end{equation}
\end{proposition}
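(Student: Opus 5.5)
The plan is to reverse the logarithmic substitution of Proposition~\ref{prop:intloglift} and reduce the claim to the classical Dirichlet-type integral
\[
\int_{\C^n} \frac{|z^\alpha|^2}{(1+\|z\|^2)^{m+n+1}}\,\dd V_{2n}(z).
\]
Indeed, $\rho(x)$ is the pullback of $\frac{(m+n)!}{\pi^n m!}(1+\|z\|^2)^{-(m+n+1)}$ under $x_j=\log|z_j|$. The computation in the proof of Proposition~\ref{prop:intloglift}, applied with the extra radial weight (which is Reinhardt-invariant), gives
\[
\int_{\R^n}\rho(x)e^{2\langle\alpha+\mathbf{1},x\rangle}\,\dd x=\frac{1}{(2\pi)^n}\cdot\frac{(m+n)!}{\pi^n m!}\int_{\C^n}\frac{|z^\alpha|^2}{(1+\|z\|^2)^{m+n+1}}\,\dd V_{2n}(z),
\]
since the coordinate hyperplanes have measure zero. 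It therefore suffices to show
\[
\int_{\C^n}\frac{|z^\alpha|^2}{(1+\|z\|^2)^{m+n+1}}\,\dd V_{2n}=\pi^n\frac{\alpha!\,(m-|\alpha|)!}{(m+n)!}.
\]
Substituting this into the display above yields exactly $\frac{1}{(2\pi)^n}\frac{\alpha!(m-|\alpha|)!}{m!}=A_\alpha$.

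Alternatively, one can compute directly in $x$. Set $t_j=e^{2x_j}$, so that $\dd x_j=\dd t_j/(2t_j)$. The integral becomes
\[
2^{-n}\int_{(0,\infty)^n}\frac{t^\alpha}{(1+t_1+\cdots+t_n)^{m+n+1}}\,\dd t.
\]
This is a standard Dirichlet (inverted Beta) integral with value
\[
\frac{\prod_j\Gamma(\alpha_j+1)\,\Gamma\bigl(m+n+1-\sum_j(\alpha_j+1)\bigr)}{\Gamma(m+n+1)}=\frac{\alpha!\,(m-|\alpha|)!}{(m+n)!},
\]
which is valid because $|\alpha|\le m$ makes the last Gamma argument $m-|\alpha|+1\ge1$. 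Multiplying by the prefactor $\frac{(m+n)!}{\pi^n m!}2^{-n}$ gives $A_\alpha$.

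To justify the Dirichlet formula, I would integrate one variable at a time. The one-dimensional identity is
\[
\int_0^\infty\frac{s^{a}}{(b+s)^{p}}\,\dd s=b^{a+1-p}B(a+1,p-a-1),
\]
for $b>0$ and $p>a+1$. Apply it to $t_n$ with $b=1+t_1+\cdots+t_{n-1}$. This reduces the exponent $p$ by $\alpha_n+1$ and produces a factor $\Gamma(\alpha_n+1)\Gamma(p-\alpha_n-1)/\Gamma(p)$. Inducting on $n$, the Gamma factors telescope to the stated product.

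There is no real obstacle here. The only points needing care are the bookkeeping of constants, namely the $2^{-n}$ from the Jacobian against the $(2\pi)^{-n}$ and $\pi^{-n}$ factors, and the convergence condition $p>a+1$ at each step. That condition holds since $m+n+1-\sum_{j\le k}(\alpha_j+1)\ge m-|\alpha|+1>0$.
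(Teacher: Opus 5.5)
Your proof is correct, and its core reduction is the same as the paper's. Both substitute $y_j=e^{2x_j}$ to reach
\[
\frac{(m+n)!}{(2\pi)^n m!}\int_{(0,\infty)^n}\frac{y^\alpha}{(1+y_1+\cdots+y_n)^{m+n+1}}\,dy .
\]
Your first route through $\C^n$ is only a detour: polar coordinates with $t_j=|z_j|^2$ turn that identity back into this same integral times $\pi^n$.

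The genuine difference is in how this inverted Dirichlet integral is evaluated. The paper passes to the coordinates $s=y_1+\cdots+y_n$, $u_j=y_j/s$. This factors the integral into the simplex integral $\int_{\Delta_{n-1}}u^\alpha\,d\tilde u=\alpha!/(|\alpha|+n-1)!$, which the paper quotes as the multivariable Beta formula, times the one-variable Beta integral $B(|\alpha|+n,\,m-|\alpha|+1)$. The factor $(|\alpha|+n-1)!$ then cancels between the two. You instead integrate out $t_n,t_{n-1},\dots,t_1$ one at a time, using only $\int_0^\infty s^a(b+s)^{-p}\,ds=b^{a+1-p}B(a+1,p-a-1)$, and the Gamma factors telescope.

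What each route buys: yours is more self-contained. It needs only the one-dimensional Beta integral and Tonelli, and in effect it proves the Dirichlet formula that the paper cites. The paper's route is shorter given the standard simplex formula, and it makes the split into a ``radial'' factor in $s$ and an ``angular'' factor on the simplex transparent.

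One cosmetic point: since you integrate $t_n$ first, the exponents that must stay positive are $m+n+1-\sum_{j\ge k}(\alpha_j+1)$, not $m+n+1-\sum_{j\le k}(\alpha_j+1)$. Your lower bound $m-|\alpha|+1>0$ covers either form, so nothing changes.
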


\begin{proof}
Apply the change of variables $x \to y=(y_1, \cdots, y_n)$ where $y_j=e^{2x_j},~j=1,\cdots,n.$
	Then
	\[
	\dd x=\frac{1}{2^n}\frac{\dd y}{y_1\cdots y_n},~~~\text{and}~~
	e^{2\langle \alpha+\mathbf{1}, x \rangle}\dd x
	=
	\frac{1}{2^n} y^\alpha\,\dd y.
	\]
Moreover,
	\begin{equation}\label{eqnintxtoy}
		\int_{\R^n}\rho(x)e^{2\langle \alpha+\mathbf{1}, x \rangle}\,\dd x
		=\frac{(m+n)!}{(2\pi)^n m!}
		\int_{(0,\infty)^n}\frac{y^\alpha}{(1+y_1+\cdots+y_n)^{m+n+1}}\,\dd y.
	\end{equation}
	To compute the integral on the right hand side, we apply the change of variables: $y \to (s, \tilde{u}):=(s, u_1, \cdots, u_{n-1}),$ where
	$$s=y_1+\cdots+y_n,~\text{and}~u_j=\frac{y_j}{s}, \quad j=1,\cdots,n-1.$$
	Then under the new coordinates, the region $(0,\infty)^n$ becomes $(0, \infty) \times \Delta_{n-1},$ where
	\[
	\Delta_{n-1}= \{( u_1, \cdots, u_{n-1}) \in \R^n: ~\text{every}~ u_j>0,\,~\text{and}~ u_1+\cdots+u_n<1\}.
	\]
	Moreover, computing the Jacobian of this transformation gives $dy=s^{\,n-1}\,ds\,d\tilde{u}$. Define a function $u_n$ of $(s, u_1, \cdots, u_{n-1})$ by
	$$u_n=u_n(s, \tilde{u}):=\frac{y_n}{s}=1-(u_1+\cdots+u_{n-1}).$$ 
	Write $u:=(\tilde{u}, u_{n-1})$ and $u^{\alpha}:=u_1^{\alpha_1}\cdots u_n^{\alpha_n}$ for $\alpha =(\alpha_1, \cdots, \alpha_n) \in \N^n.$ Then $y^\alpha=s^{|\alpha|}u^\alpha.$ Consequently,
	
	\begin{equation}\label{eqninty}
		\begin{aligned}
			\int_{(0,\infty)^n}
			\frac{y^\alpha}{(1+y_1+\cdots+y_n)^{m+n+1}}\,dy
			&=
			\int_0^\infty
			\int_{\Delta_{n-1}}
			\frac{s^{|\alpha|}u^\alpha}{(1+s)^{m+n+1}}
			s^{n-1}\,d\tilde{u} \,ds
			\\[6pt]
			&=
			\left(\int_{\Delta_{n-1}} u^\alpha\,d \tilde{u} \right)
			\left(\int_0^\infty
			\frac{s^{|\alpha|+n-1}}{(1+s)^{m+n+1}}\,ds
			\right).
		\end{aligned}
	\end{equation}
By one of the standard Beta function integral formulas,
	\begin{equation}\label{eqnbeta1}
		\int_0^\infty
		\frac{s^{|\alpha|+n-1}}{(1+s)^{m+n+1}}\,ds=
		B(|\alpha|+n,m-|\alpha|+1)
		=
		\frac{(|\alpha|+n-1)!(m-|\alpha|)!}{(m+n)!}.
	\end{equation}
We also recall the following multivariable version of  Beta function integral. Let $\beta_1, \cdots, \beta_{n}>0.$ Then
$$\int_{\Delta_{n-1}}
	u_1^{\beta_1-1}\cdots u_{n-1}^{\beta_{n-1}-1}
	(1-u_1-\cdots-u_{n-1})^{\beta_n-1}
	\,d \tilde{u}
	=
	\frac{\Gamma(\beta_1)\cdots\Gamma(\beta_n)}{\Gamma(\beta_1+\cdots+\beta_n)}.$$
In particular, this yields
	\begin{equation}\label{eqnbeta2}
		\int_{\Delta_{n-1}} u^\alpha\,d \tilde{u}=\frac{\alpha!}{(|\alpha|+n-1)!}.
	\end{equation}
Then the conclusion follows from \eqref{eqnintxtoy}, \eqref{eqninty}, \eqref{eqnbeta1} and \eqref{eqnbeta2}.
\end{proof}

\begin{proposition}\label{prop:A-in-interior}
Let $K_v$ be the cone generated by the image of $v$, as defined in the notation list in \S\ref{sec21}. Then the vector $A$ lies in the interior $\operatorname{int} K_v$ of $K_v$.
\end{proposition}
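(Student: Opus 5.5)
Our plan is to use the integral representation \eqref{eqnarhov} from Proposition~\ref{prop:integral-representation}, namely $A=\int_{\R^n}\rho(x)v(x)\,\dd x$ with $\rho>0$ everywhere, together with a separation (duality) argument. The first step is to show that the components $v_\alpha$, $\alpha\in\I$, are linearly independent as functions on $\R^n$. In the variables $y_j=e^{2x_j}$ we have $v_\alpha=y^{\alpha}\cdot y_1\cdots y_n$, and distinct monomials are linearly independent on the open set $(0,\infty)^n$. Consequently $\{v(x):x\in\R^n\}$ spans $\R^N$, so $K_v$ is a convex cone with nonempty interior. Since $K_v$ is convex, its closure $\overline{K_v}$ has the same interior as $K_v$.

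We argue by contradiction and suppose $A\notin\operatorname{int}K_v$. Because $\operatorname{int}K_v=\operatorname{int}\overline{K_v}$ is a nonempty convex set not containing $A$, the supporting hyperplane theorem gives a nonzero vector $\ell\in\R^N$ with $\langle \ell, A\rangle\le 0$ and $\langle \ell, w\rangle\ge 0$ for all $w\in K_v$, in particular $\langle \ell,v(x)\rangle\ge0$ for every $x\in\R^n$. (The inequality $\langle\ell,A\rangle\le0$ follows because $A\notin \operatorname{int}\overline{K_v}$; if instead $A\notin\overline{K_v}$ we may even take it strict.) Now write $p(x):=\langle \ell, v(x)\rangle=\sum_{\alpha\in\I}\ell_\alpha e^{2\langle\alpha+\mathbf 1,x\rangle}$, which is continuous, nonnegative, and not identically zero by the linear independence from the first step. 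Integrating against $\rho$ and exchanging the finite sum with the integral (each $\rho v_\alpha$ is integrable by Proposition~\ref{prop:integral-representation}) gives
\[
\langle \ell, A\rangle=\int_{\R^n}\rho(x)\,p(x)\,\dd x>0,
\]
since $\rho>0$ and $p\ge0$ is positive on a nonempty open set. This contradicts $\langle \ell,A\rangle\le0$, so $A\in\operatorname{int}K_v$.

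The only delicate point is the convex-analytic step. There we must check that nonempty interior lets us use $\operatorname{int}\overline{K_v}=\operatorname{int}K_v$, and that a point outside the interior of a closed convex set with nonempty interior admits a supporting functional that is nonnegative on the set. Both are standard finite-dimensional facts. Alternatively, we could show directly that the functional $\ell\mapsto\int\rho\langle\ell,v\rangle$ is strictly positive on the dual cone minus $\{0\}$, which characterizes interior points.
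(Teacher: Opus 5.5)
Your proof is correct and follows essentially the same route as the paper: take a supporting functional $\ell\neq 0$ for the cone, note that $\langle \ell, v(x)\rangle$ is a nonnegative exponential polynomial, integrate it against the positive density $\rho$, and use linear independence of the monomials $y^{\alpha}$ on $(0,\infty)^n$. The one difference is a small streamlining. Because you only need $\langle \ell, A\rangle \le 0$ (rather than $=0$), you can skip the paper's preliminary Riemann-sum step showing $A\in\overline{K_v}$, and you also make explicit the identity $\operatorname{int}\overline{K_v}=\operatorname{int}K_v$, which the paper uses without stating it.
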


\begin{proof}
	%From Proposition~\ref{prop:integral-representation}, we know that
	%\[
	%A=\int_{\R^n}\rho(x)v(x)\,\dd x.
	%\]
We first claim that \eqref{eqnarhov} implies $A \in \overline{K_v}$. To see this, define $f(x):=\rho(x)v(x):\R^n \to \R^N$, where $N$ is as in the notation list in \S\ref{sec21}. %Then $f(x) \in K$ 
Fix $\varepsilon > 0$. Choose a sufficiently large $n-$cube $U = [-R,R]^n$ such that
\begin{equation}\label{eqnaintu}
	 \left\| A- \int_{U} f(x)\, dx \right\|=\left\| \int_{\mathbb{R}^n \setminus U} f(x)\, dx \right\| < \frac{\varepsilon}{2}.
\end{equation}
Since $f$ is continuous on the compact set $U$, it is uniformly continuous. Partition $U$ into finitely many small $n-$cubes $\{W_i\}_{i=1}^M$,  such that
	\[
	\|f(x) - f(y)\| \le \frac{\varepsilon}{2 |U|} \quad \text{for all } x, y \in W_i.
	\]
For simplicity, we write $|W_i|$ for the volume $\mathrm{Vol}_{n}(W_i)$ of $W_i$.	Choose $p_i \in W_i$ for every $1 \leq i \leq M.$ Then
\begin{equation}\label{eqnintuminussep}
	\left\| \int_U f(x)\, dx - \sum_{i=1}^M f(p_i)\, |W_i| \right\|
	\le \sum_{i=1}^M \int_{W_i} \|f(x) - f(p_i)\|\, dx
	\le \frac{\varepsilon}{2}.
\end{equation}
	Set
	\[
	S_{\varepsilon} := \sum_{i=1}^M f(p_i)\, |W_i|
	= \sum_{i=1}^M \rho(p_i)\, |W_i|\, v(p_i).
	\]
Since $\rho(p_i)\,|W_i| > 0$, we have $S_{\varepsilon} \in K_v$.
But \eqref{eqnaintu} and \eqref{eqnintuminussep} yield  
$$\|A - S_{\varepsilon}\|  < \frac{\varepsilon}{2}+\frac{\varepsilon}{2}= \varepsilon.$$
Since $\varepsilon$ can be arbitrarily small, we conclude $A \in \overline{K_v}$.

	%because integrals of positive combinations lie in the closure of the cone.
	
To prove $A \in \operatorname{int} K_v,$ assume for contradiction that $A\notin \operatorname{int} K_v$. Since $K_v$ is a convex cone in
$\R^N$, there exists a nonzero linear functional $\Lambda:\R^N\to \R$ such that
	\[
	\Lambda(w)\ge 0~~~\text{for all }w\in K_v,~\text{and}~\Lambda(A)=0.
	\]
	Write
	\[
	\Lambda(u)=\sum_{\alpha\in \I} c_\alpha u_\alpha, \quad ~\text{where}~u=(u_\alpha)_{\alpha \in \I}.
	\]
(Here we use that $K_v$ is a cone, so the supporting hyperplane may be chosen to pass through the origin; hence $\Lambda$ may be taken to be linear, with no constant term.) Therefore, for every $x\in\R^n$, since $v(x)\in K_v$,
	\[
	\phi(x):=\Lambda(v(x))=\sum_{\alpha\in \I} c_\alpha e^{2\inner{\alpha+\mathbf{1}}{x}}\ge 0.
	\]
On the other hand, $\Lambda(A)=0$, together with \eqref{eqnarhov}, implies that
	\[\int_{\R^n}\rho(x)\phi(x)\,\dd x=0.
	\]
Since $\rho(x)>0$ and $\phi(x)\geq 0$ on $\mathbb{R}^n$, the above identity implies that $\phi(x)\equiv 0$ on $\mathbb{R}^n$. Consequently, writing $y_j=e^{2x_j}>0$, we obtain
\[
\sum_{\alpha\in \mathcal{I}} c_\alpha y^{\alpha+\mathbf{1}}=0,
\qquad \text{and hence} \qquad
\sum_{\alpha\in \mathcal{I}} c_\alpha y^\alpha=0
\]
for all $y\in (0,\infty)^n$. By analyticity, the latter identity holds for all $y\in \mathbb{R}^n$. It follows that all coefficients $c_\alpha$ vanish, contradicting the assumption that $\Lambda\neq 0$. Hence $A \in \operatorname{int} K_v$.
\end{proof}

Before we prove the next proposition, we recall the notion of a cone generated by a set  $S\subseteq \R^N:$

$$\operatorname{cone}(S):=\left\{\sum_{i=1}^k a_i y_i :  k \in \Z_+, a_i \geq 0, y_i \in S \right\}.$$

\begin{proposition}\label{prop:finite-positive}
Let $S \subseteq \mathbb{R}^N$ and let $K_S = \operatorname{cone}(S)$. If $B \in \operatorname{int} K_S$, then there exist a positive integer $M$, vectors $y_1, \cdots, y_M \in S$, and positive coefficients $a_1, \cdots, a_M > 0$ such that
	$$B=\sum_{j=1}^M a_j y_j,$$
	and such that the vectors $y_1,\cdots,y_M$ span $\R^N$. (In particular, $M \geq N$).
\end{proposition}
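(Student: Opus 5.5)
The plan is to use the interiority of $B$ to perturb it. Since $B \in \operatorname{int} K_S$, the cone $K_S$ has nonempty interior in $\R^N$. This forces $S$ to span $\R^N$: otherwise $K_S$ would lie in a proper subspace and have empty interior. So we can pick $z_1,\dots,z_N \in S$ forming a basis of $\R^N$.

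Choose $\varepsilon>0$ so small that the closed ball of radius $\varepsilon\sum_i\|z_i\|$ about $B$ lies in $\operatorname{int} K_S$, and set
\[
B' := B - \varepsilon\sum_{i=1}^N z_i .
\]
Then $B' \in K_S$. By the definition of $\operatorname{cone}(S)$, we can write $B'=\sum_{k=1}^{p} b_k w_k$ with $b_k\ge 0$ and $w_k\in S$. In the degenerate case $B'=0$, we take $p=0$.

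Discard the terms with $b_k=0$ and combine the two representations:
\[
B=\sum_{k} b_k w_k+\sum_{i=1}^N \varepsilon z_i .
\]
This is a finite combination of elements of $S$ with strictly positive coefficients. The vectors appearing include $z_1,\dots,z_N$, so they span $\R^N$, and hence $M\ge N$. If some $w_k$ coincides with some $z_i$, we may merge the two terms by adding their coefficients, which stay positive; keeping the duplicates as separate terms is also acceptable. Listing all vectors as $y_1,\dots,y_M$ with coefficients $a_j>0$ gives the claimed representation.

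The only step needing care is the first one, deducing that $S$ spans $\R^N$ from $\operatorname{int}K_S\neq\emptyset$. A cone generated by vectors lying in a proper subspace $V$ is contained in $V$, and a proper subspace has empty interior, so this is immediate. Nothing else, such as Carathéodory's theorem, is required. If we wanted to bound $M$ (say $M\le 2N$), we could apply Carathéodory to $B'$, but the statement does not ask for this.
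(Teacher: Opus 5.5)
Your argument is correct, and it shares the paper's central trick. You subtract a small positive combination of generators from $B$, represent the remainder in the cone with nonnegative coefficients, and add the subtracted terms back so that every coefficient becomes strictly positive.

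Where you differ is in how spanning is secured. The paper's route is as follows:
\begin{itemize}
\item it chooses a small basis $\xi_1,\dots,\xi_N$ and represents the $2N+1$ points $B$, $B\pm\xi_i$ over $S$;
\item it collects the generators used into a finite set $Y$ and gets $\operatorname{span}(Y)=\R^N$ from $\xi_i=\frac12\bigl((B+\xi_i)-(B-\xi_i)\bigr)$;
\item it must then verify $B\in\operatorname{int}K_Y$ before subtracting $t\sum_{y\in Y}y$ inside the finite cone $K_Y$.
\end{itemize}
You instead use $K_S\subseteq\operatorname{span}(S)$ to extract a basis $z_1,\dots,z_N$ directly from $S$. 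You then perturb only along these vectors while staying in $K_S$, so the spanning vectors are built into the representation from the outset.

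This dispenses with the auxiliary cone $K_Y$ and the interiority claim for it. Combined with Carath\'eodory applied to $B'$, it also gives the bound $M\le 2N$ for free. The only thing the paper's version gains is that every element of the finite set $Y$ receives a positive weight. Nothing later in the paper (e.g.\ Corollary~\ref{cor:discrete-A}) uses that, so your shorter argument serves equally well.
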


\begin{proof}
	Since $B \in \operatorname{int}K_S$, there exists $r>0$ such that $\{x \in \R^N: |x-B|< r \}\subseteq K_S.$
	Choose a basis $\xi_1,\cdots,\xi_N$ of $\R^N$ with all $|\xi_i|<r$. Then
	\[
	\Gamma:=\{B,\ B+\xi_1,\cdots, B+\xi_N,\ B-\xi_1,\cdots, B-\xi_N\} \subseteq K_S.
	\]

By the definition of the cone $K_S$, each  point in $\Gamma$ can be represented as a finite nonnegative combination of
	elements of $S$. Collect all elements of $S$ appearing in all these representations into a finite set
	$Y:=\{y_1,\cdots,y_M\}\subseteq S.$
	Then $B$ and all $B \pm \xi_i$ belong to $K_Y:=\operatorname{cone}(Y)$.
	Therefore
	\[
	\xi_i=\frac12\bigl((B+\xi_i)-(B-\xi_i)\bigr)\in \operatorname{span}(Y),
	\]
Consequently, $\operatorname{span}(Y)=\R^N$. Note that $B$ lies in the interior of the cone $K_Y$, i.e., $B \in \operatorname{int}K_Y$. (This can be seen from the facts that all $B \pm \xi_i$ belong to $K_Y$ and that $K_Y$ is convex.) Now set
	\[
	u:=y_1+\cdots+y_M.
	\]
	%Since $A\in \operatorname{int}K_Y$, 
	Then for sufficiently small $t>0$ we have $B-tu\in K_Y.$
	Thus
	\[
	B-tu=\sum_{j=1}^M b_j y_j~~~\text{for some}~~~b_j\ge 0.
	\]
	Consequently, $B=\sum_{j=1}^M (b_j+t)y_j,$
	and every coefficient $a_j:=b_j+t$ is strictly positive.
\end{proof}

By Proposition~\ref{prop:A-in-interior}, $A \in \operatorname{int} K_v.$ Applying Proposition~\ref{prop:finite-positive} to $S=\{v(x):x\in \R^n\}$ and $B=A$, we obtain:

\begin{corollary}\label{cor:discrete-A}
There exist points $p_1,\cdots,p_M\in \R^n$
and coefficients $a_1,\cdots,a_M>0$ for some positive integer $M$
such that
	\begin{equation}\label{eqnaincone}
		A=\sum_{j=1}^M a_jv(p_j),
	\end{equation}
	and the vectors $v(p_1),\cdots,v(p_M)$ span $\R^N$ (in particular, $M \geq N$).
By relabeling if necessary, we may and shall assume that the first $N$ vectors are linearly independent. Consequently, the matrix
	\begin{equation}\label{eqndefl}
		L:=\bigl(v(p_1),\ \cdots\ ,v(p_N)\bigr)\in \R^{N\times N}~\text{is invertible}.
	\end{equation}
Here the vectors in $\mathbb{R}^N$, in particular $v(p_j)$, are regarded as column vectors.
\end{corollary}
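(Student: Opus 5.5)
The corollary follows almost immediately from the two preceding propositions; all that remains is some bookkeeping for the relabeling. First we invoke Proposition~\ref{prop:A-in-interior}, which places $A$ in $\operatorname{int} K_v$, where $K_v=\operatorname{cone}(S)$ with $S=\{v(x):x\in\R^n\}$. Next we apply Proposition~\ref{prop:finite-positive} with this $S$ and $B=A$. It yields finitely many vectors $y_1,\cdots,y_M\in S$ and coefficients $a_1,\cdots,a_M>0$ with $A=\sum_j a_j y_j$, and the $y_j$ span $\R^N$. Since each $y_j$ lies in $S$, we may choose a point $p_j\in\R^n$ with $y_j=v(p_j)$, making one such choice for each $j$. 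This gives \eqref{eqnaincone}. Because $N$ vectors can never span $\R^N$ with fewer than $N$ of them, we also get $M\ge N$.

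For the invertibility of $L$, we use the fact that any finite spanning set of $\R^N$ contains a basis. So some $N$ of the vectors $v(p_1),\cdots,v(p_M)$ are linearly independent. We then relabel the indices $j$, permuting the pairs $(p_j,a_j)$ together, so that these $N$ vectors come first. The identity \eqref{eqnaincone}, the positivity of the $a_j$, and the spanning property are all unaffected by a permutation. After relabeling, the $N\times N$ matrix $L$ whose columns are $v(p_1),\cdots,v(p_N)$ has linearly independent columns, and so it is invertible, which is \eqref{eqndefl}.

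None of these steps is a genuine obstacle. The substance of the argument lies in the two cited propositions. The only point that needs care is to keep each coefficient $a_j$ attached to its point $p_j$ during the relabeling, so that the representation of $A$ is preserved.
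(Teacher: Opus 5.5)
Your proposal is correct and matches the paper's argument: the paper obtains the corollary by using Proposition~\ref{prop:A-in-interior} to get $A\in\operatorname{int}K_v$, applying Proposition~\ref{prop:finite-positive} with $S=\{v(x):x\in\R^n\}$ and $B=A$, and then extracting a basis from the spanning set and relabeling. Your point that each pair $(p_j,a_j)$ must be permuted together is exactly the bookkeeping the paper leaves implicit.
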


%\begin{remark}
%The finite discrete representation of the moment data in \eqref{eqnaincone}, and especially the nondegeneracy of $L$, will play a fundamental role in our later proof of using the Brouwer's fixed point  type theorem, Proposition \ref{prop:brouwer}.
%\end{remark}

\begin{remark}
It is clear that the choices of the integer $M$, the points $\{p_j\}_{j=1}^M$, and the coefficients $\{a_j\}_{j=1}^M$ depend only on the map $v:\R^n \to \R^N$ and the data $A \in \R^N$. The latter depend only on $m$ and $n$. Therefore,  $M$, $\{p_j\}_{j=1}^M$, and $\{a_j\}_{j=1}^M$ (and hence also $L$) depend only on $m$ and $n$.
\end{remark}

%%%%%%%%%%%%%%%%%%%%%%%%%%%%%%%%%%%%%%%%%%%%%%%%%%%%%%%%%%%%%%%%%%%%%%%%%%%%%%%%%%%%%%%%%%%%%%%%%%%%%%

\subsection{Preparation for the tail domain construction}
Let $(m,n)$ be a fixed  pair of positive integers with $n \geq 2$ as above. Set
\begin{equation}\label{eqndefgamma}
\gamma=\gamma_{m, n}:=\frac{2(m+n+1)}{n-1}.
\end{equation}
For  a parameter $R>1,$ define a domain in  $\R^n$ by
\[
T_R
:=
\left\{
x=(x_1, \cdots, x_n)\in \R^n~~\big|~~
t:=\frac{x_1+\cdots+x_n}{n}>R,
\quad
|x_j-t|<e^{-\gamma t},
\quad
j=1,\cdots,n
\right\}.
\]

\begin{remark}\label{rmk1taildomain}
Let $x \in T_R.$ Since for each $j$ we have $t - x_j \le |x_j - t| < e^{-\gamma t}$, it follows that $x_j > t - e^{-\gamma t} > R - 1$. Therefore, $T_R \subseteq (0,\infty)^n$. Moreover, for every $R > 1$,
\[
\{(x_1,\cdots,x_n) : x_1 = \cdots = x_n \in (R,\infty)\} \subseteq T_R.
\]
Consequently, for any $R, R' > 1$, we have $T_R \cap T_{R'} \neq \emptyset$. These facts will be used to prove part~(2) of Theorem~\ref{thm2}.
\end{remark}
%Geometrically, this is a very thin tube around the diagonal direction
%\[
%x_1=\cdots=x_n=t,
%\]
%whose cross-section shrinks exponentially as $t\to+\infty$.

\begin{proposition}\label{prop:tail}
%Fix $\gamma>0.$ Then 
For every multi-index $\alpha\in \N^n$, there exists  positive constants $C_{\alpha}$ and $\widetilde{C}_{\alpha},$ depending only on $\alpha,$ such that
\begin{equation}\label{eqnestimateinttr}
\widetilde{C}_{\alpha} \int_R^\infty e^{2(|\alpha|-m-1)u}\,\dd u < 
\int_{T_R} e^{2\langle \alpha+\mathbf{1}, x \rangle}\,\dd x
<  C_{\alpha}
\int_R^\infty e^{2(|\alpha|-m-1)u}\,\dd u.
\end{equation}
Consequently,

\begin{enumerate}[label=(\roman*)]
\item If $|\alpha|\le m$, i.e., $\alpha \in \I,$ then
\[
\int_{T_R} e^{2\langle \alpha+\mathbf{1}, x \rangle}\,\dd x<\infty,
\]
and furthermore this integral tends to $0$ as $R\to\infty$;

\item If $|\alpha|\ge m+1$, then
\[
\int_{T_R} e^{2\langle \alpha+\mathbf{1}, x \rangle}\,\dd x=+\infty.
\]
\end{enumerate}
\end{proposition}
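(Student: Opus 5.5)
Our approach is to change variables to coordinates adapted to the diagonal. Write $x = t\mathbf{1} + w$, where $t = \frac{x_1+\cdots+x_n}{n}$ and $w=(w_1,\cdots,w_n)$ satisfies $\sum_j w_j = 0$, so $w_j = x_j - t$. Parametrize the hyperplane $\{\sum w_j=0\}$ by $(w_1,\cdots,w_{n-1})$. The linear map $x \mapsto (t, w_1,\cdots,w_{n-1})$ is invertible with constant Jacobian; call its determinant $\kappa_n>0$, which depends only on $n$. Then $\dd x = \kappa_n\,\dd t\,\dd w_1\cdots \dd w_{n-1}$. In these coordinates, $T_R$ consists of the points with $t>R$, $|w_j|<e^{-\gamma t}$ for $j\le n-1$, and $|w_1+\cdots+w_{n-1}|<e^{-\gamma t}$, since $w_n=-(w_1+\cdots+w_{n-1})$.

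For fixed $t$, this cross-section contains the cube $\{|w_j|<e^{-\gamma t}/(n-1)\}$ and is contained in the cube $\{|w_j|<e^{-\gamma t}\}$. Its $(n-1)$-volume therefore lies between $c_n e^{-(n-1)\gamma t}$ and $2^{n-1}e^{-(n-1)\gamma t}$, where $c_n=(2/(n-1))^{n-1}$. By the choice of $\gamma$ in \eqref{eqndefgamma}, $(n-1)\gamma = 2(m+n+1)$.

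Next, the integrand is $e^{2\langle\alpha+\mathbf{1},x\rangle} = e^{2(|\alpha|+n)t}\,e^{2\langle\alpha+\mathbf{1},w\rangle}$. On $T_R$ we have $|w_j|<e^{-\gamma t}<1$, so $|\langle \alpha+\mathbf{1},w\rangle| \le |\alpha|+n$. Hence the second factor lies between $e^{-2(|\alpha|+n)}$ and $e^{2(|\alpha|+n)}$. Applying Fubini in $(t,w)$ and combining with the cross-section bounds, the integral over $T_R$ is squeezed between constant multiples of
\[
\int_R^\infty e^{2(|\alpha|+n)t}e^{-2(m+n+1)t}\,\dd t=\int_R^\infty e^{2(|\alpha|-m-1)t}\,\dd t.
\]
The constants are $\widetilde C_\alpha = \tfrac12\kappa_n c_n e^{-2(|\alpha|+n)}$ and $C_\alpha = 2\kappa_n 2^{n-1}e^{2(|\alpha|+n)}$. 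The factors $\tfrac12$ and $2$ only serve to make the inequalities in \eqref{eqnestimateinttr} strict. This is legitimate because the one-dimensional integral is positive, either finite or $+\infty$; in the $+\infty$ case the strict inequality is read in the extended sense. These constants depend on $n$ and $m$ as well as $\alpha$, but $(m,n)$ is fixed throughout, so this is harmless.

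The consequences then follow directly. If $|\alpha|\le m$, the exponent $2(|\alpha|-m-1)\le -2$ is negative, so $\int_R^\infty e^{2(|\alpha|-m-1)u}\,\dd u = \frac{e^{-2(m+1-|\alpha|)R}}{2(m+1-|\alpha|)}$. This is finite and tends to $0$ as $R\to\infty$, which gives (i) via the upper bound. If $|\alpha|\ge m+1$, the exponent is nonnegative, so the one-dimensional integral is $+\infty$, and the lower bound gives (ii).

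The only mildly delicate step is the two-sided estimate of the cross-sectional volume. The constraint $|w_n|<e^{-\gamma t}$ couples the coordinates, and it is handled by the inner and outer cube comparison above. Everything else is routine bookkeeping.
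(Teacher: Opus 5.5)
Your proof is correct and follows essentially the same route as the paper. Both use the diagonal coordinates $(t,y_1,\dots,y_{n-1})$, the factorization $e^{2(|\alpha|+n)t}e^{2\langle\alpha+\mathbf{1},y\rangle}$ with the transverse factor trapped between $e^{\mp 2(|\alpha|+n)}$, and the identity $(n-1)\gamma=2(m+n+1)$. The only difference is that the paper computes the cross-sectional volume exactly by rescaling, $\mathrm{Vol}_{n-1}(E_t)=V_0e^{-\gamma(n-1)t}$, whereas you sandwich it between an inner and an outer cube.

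One cosmetic point: the constant in $dx=\kappa_n\,dt\,dw$ is the reciprocal of the forward map's determinant, namely $n$, not the determinant itself. Since only its positivity matters, this changes nothing.
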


\begin{proof}

As in the definition of $T_R$, set
$t:=\frac{x_1+\cdots+x_n}{n}.$ We also set $y_j:=x_j-t,$ and $y:=(y_1, \cdots, y_n), 1 \leq j \leq n.$ Then
\[
x_j=t+y_j,
\qquad
\sum_{j=1}^n y_j=0.
\]
Consequently, $x=t \mathbf{1}+y.$ We apply the following change of variables:
$$x=(x_1, \cdots, x_n) \to (t, \tilde{y}):=(t, y_1, \cdots, y_{n-1}),$$
and we regard $y_n$ as a function of $\tilde{y}:$
$$y_n=y_{n}(\tilde{y})=-\sum_{j=1}^{n-1} y_j.$$
By computing the Jacobian of the change of variables and its determinant, we see the volume element changes as follows: $dx=n\,dt\,d\tilde{y}.$  Furthermore, in the new coordinates, the domain $T_R$ becomes 
$$\{(t, \tilde{y}) \in \R^n ~|~ t \in (R, \infty), \tilde{y} \in E_t\},$$ 
$$~\text{where}~E_t=\left\{ \tilde{y} \in \R^{n-1}:\ |y_1|<e^{-\gamma t},~ \cdots,~|y_{n-1}|<e^{-\gamma t},~~~|y_n|=\left|\sum_{j=1}^{n-1} y_j\right|<e^{-\gamma t}  \right\}. $$ 
%the Lebesgue measure \(dx\) differs from \(dt\,d\sigma(y)\) only by a positive constant, where \(d\sigma(y)\) denotes the \((n-1)\)-dimensional Euclidean volume element on the hyperplane \(H\). Hence there exists a constant \(C_n>0\) such that
%\[
%dx=C_n\,dt\,d\sigma(y).
%\]
Therefore, writing $y=(\tilde{y}, y_n),$ we have
\begin{equation}\label{eqninttr1}
\int_{T_R}e^{2\langle \alpha+\mathbf{1}, x \rangle}\,dx
=n
\int_R^\infty
\int_{E_t}
e^{2 \langle\alpha+\mathbf{1},t\mathbf{1}+y\rangle}
\, d\tilde{y}\,dt,
\end{equation}
%where
%\[\mathbf{1}=(1,\cdots,1),\qquadE_t=
%\left\{
%y\in H:\ |y_j|<e^{-\gamma t}\right\}.
%\]
Next we separate the exponential factor:
\[
\langle \alpha+\mathbf{1},t\mathbf{1}+y\rangle
=
\langle \alpha+\mathbf{1},t\mathbf{1} \rangle +\langle \alpha+\mathbf{1}, y \rangle
=
(|\alpha|+n)t+\langle \alpha+\mathbf{1}, y \rangle.
\]
This helps us to further reduce the integral in \eqref{eqninttr1} into
\begin{equation}\label{eqninttr2}
\int_{T_R}e^{2\langle \alpha+\mathbf{1}, x \rangle}\,dx
=
n
\int_R^\infty
e^{2(|\alpha|+n)t}
\left(
\int_{E_t}e^{2\langle \alpha+\mathbf{1}, y \rangle}\,d \tilde{y}
\right)
dt.
\end{equation}
To estimate the inner integral over $E_t$  on the right hand side, we prove

\smallskip

{\bf Claim:} Let $t>0.$ The following two statements hold:

(a) Let $\alpha\in \N^n$, and set $b_{\alpha}:=2(|\alpha|+n)$. Then 

\[
e^{-b_{\alpha}}
<
e^{2\langle \alpha+\mathbf{1}, y \rangle}
<
e^{b_{\alpha}}, ~~~\forall  y=(\tilde{y}, y_n)~\text{with}~\tilde{y} \in E_t.
\]

\smallskip

(b) There exists a constant $V_0>0$ (which is in particular independent of $t$) such that the volume of $E_t$ in $\R^{n-1}$ satisfies
$\mathrm{Vol_{n-1}}(E_t)=V_0  e^{-\gamma (n-1)t}.$

\smallskip

{\bf Proof of Claim:}  To prove (a), we note for every $y=(\tilde{y}, y_n)$ with $\tilde{y} \in E_t,$

\[
|\langle \alpha+\mathbf{1}, y \rangle|
\le
\sum_{j=1}^n(\alpha_j+1)|y_j|
\le
(|\alpha|+n)e^{-\gamma t}.
\]
Consequently, with $b_{\alpha}=2(|\alpha|+n),$ we have
\[
-b_{\alpha} < -b_{\alpha}e^{-\gamma t}
<
2\langle \alpha+\mathbf{1}, y \rangle
\le
b_{\alpha}e^{-\gamma t}  \le b_{\alpha}.
\]
This yields the conclusion in (a). To prove (b), we set

$$E_0:= \left\{\tilde{u}=(u_1, \cdots, u_{n-1}) \in \R^{n-1}:\ |u_1|<1,~ \cdots,~|u_{n-1}|<1,~~~\left|\sum_{j=1}^{n-1} u_j\right|<1  \right\}.$$
We also write $V_0=\mathrm{Vol_{n-1}}(E_0).$  Now to compute the volume of $E_t$, we apply the change of variables: $(y_1, \cdots, y_{n-1}) \to (u_1, \cdots, u_{n-1})$ with $u_j=e^{\gamma t}y_j$, and get

$$\mathrm{Vol_{n-1}}(E_t)=\int_{E_t} d\tilde{y}= e^{-\gamma (n-1)t} \int_{E_0} d\tilde{u}=e^{-\gamma (n-1)t} V_0.$$
This proves part (b) and establishes the claim. \qed

\smallskip

We now return to the proof of Proposition~\ref{prop:tail}. By the claim, we immediately have

$$
e^{-b_{\alpha}}V_0 e^{-\gamma(n-1)t}
<
\int_{E_t} e^{2\langle \alpha+\mathbf{1}, y\rangle}\, d\tilde y
<
e^{b_{\alpha}}V_0 e^{-\gamma(n-1)t}.
$$

The above inequality together with \eqref{eqninttr2} yields, with
$C_{\alpha}:=e^{b_{\alpha}}nV_0$ and
$\widetilde C_{\alpha}:=e^{-b_{\alpha}}nV_0$,

$$
\widetilde C_{\alpha}\int_R^\infty
e^{2(|\alpha|+n)t}e^{-\gamma(n-1)t}\,dt
<
\int_{T_R} e^{2\langle \alpha+\mathbf{1}, x\rangle}\,dx
<
C_{\alpha}\int_R^\infty
e^{2(|\alpha|+n)t}e^{-\gamma(n-1)t}\,dt.
$$

By the definition of $\gamma$ in \eqref{eqndefgamma}, we arrive at \eqref{eqnestimateinttr}. Finally, assertions (i) and (ii) follow easily from \eqref{eqnestimateinttr}. This finishes the proof of Proposition~\ref{prop:tail}.
\end{proof}

\section{Construction of the desired domain and proof of Theorem \ref{thm1}}\label{sec3}

%\subsection{Something before construction}

In this section, we construct the desired domains and establish Theorem \ref{thm1}. Before we start the construction, we fix some data and notations which will be used throughout the construction.

Let the  positive integer $M$, the  points $p_1,\cdots,p_M\in \R^n$
and coefficients $a_1,\cdots,a_M>0, $ as well as the $N \times N$ matrix $L$,  be as in Corollary~\ref{cor:discrete-A}.
Choose two big balls $\mathcal{B}_1, \mathcal{B}_2$ centered at $0$ in $\R^n$ such that 
\[
\{p_1,\cdots,p_M\}  \subseteq  \mathcal{B}_1 \subseteq \Ol{\mathcal{B}_1} \subseteq  \mathcal{B}_2 \subseteq \Ol{\mathcal{B}_2}.
\]
For $r>0$ and $p \in \R^n$, we denote by $B(p,r)$ the ball in $\R^n$ centered at $p$ with radius $r$. Pick $\rho>0$ sufficiently small so that $\{B(p_j,2\rho)\}_{j=1}^M$ are
pairwise disjoint and precompact balls contained in $\mathcal{B}_1$.
%Note the choice of $Q_1, Q_2, Q_3$ and $\rho$ do not depend on $c.$
Write 
\begin{equation}\label{eqnka12}
\kappa_1=\sup_{x \in \Ol{\mathcal{B}_2}} \|v(x)\|, ~\text{and}~\kappa_2=\sup_{x \in \Ol{\mathcal{B}_2}} \|v'(x)\|.
\end{equation}
%\begin{equation}\label{eqnlm12}
%\kappa:=\sup_{x \in \Ol{\mathcal{B}_2}} \|v'(x)\|.
%\end{equation}
Here $v'$ denotes the map obtained from $v$ by taking the derivative component-wise. Furthermore, set
\begin{equation}\label{eqndefs}
S(x):=\sum_{j=1}^n e^{2x_j};
\end{equation}
%Define
\begin{equation}\label{eqns0}
	s:=\min_{x \in \Ol{\mathcal{B}_2}}|S(x)|>0, ~\text{and}~s_0:=\min \left\{\frac{s}{2}, \frac{1}{N}\right\}<s.
\end{equation}

%In this section we combine the bounded geometric skeleton, the bridge to the tail, the tail itself, and the variable branches into one family, and then apply Lemma~\ref{lem:quantitative}. Following the discussion, we \emph{fix} a small positive number $c$ at the beginning of the construction, and all later choices depend on this $c$.
\begin{comment}
\begin{theorem}[Geometric realization theorem]\label{thm:geometric-realization}
Assume $n\ge 2$, and let the discrete data
\[
A=\sum_{j=1}^M a_j\,v(p^{(j)}),\qquad a_j>0,
\]
be as in Corollary~\ref{cor:discrete-A}, with
\[
L=\bigl(v(p^{(1)})\ \cdots\ v(p^{(N)})\bigr)
\]
invertible. Then for every sufficiently small $c>0$ there exists  a connected open set
\[
D_{c}\subset \R^n
\]
such that:

\begin{enumerate}[label=(\roman*)]
\item for every $\alpha\in \I$,
\[
\int_{D_{c}} e^{2\inner{\alpha+1}{x}}\,\dd x
=
(cA)_\alpha;
\]

\item for every multi-index $\alpha$ with $|\alpha|\ge m+1$,
\[
\int_{D_{c}} e^{2\inner{\alpha+1}{x}}\,\dd x=+\infty.
\]
\end{enumerate}
\end{theorem}

\end{comment}

\begin{remark}
	Note that the choices of the above data—$\mathcal{B}_1, \mathcal{B}_2, \rho, \kappa_1, \kappa_2, s, s_0$—depend only on $\{p_j\}_{j=1}^M$ and the map $v(x)$, and hence ultimately depend only on $m$ and $n$.   
\end{remark}

Let $0< c<1$ be a parameter, which we will make very small in later proof.  We also fix some auxiliary parameters which depend on $c$.
Fix any  constant $0<\beta<\frac1{n-1}$ and set
\[
\varepsilon_c:=c^\beta, \qquad %\qquad 0<\beta<\frac1{n-1},
~\text{an $(n-1)$-cube}~~Q_c:=(-\varepsilon_c,\varepsilon_c)^{n-1} \subseteq \R^{n-1};
\]
\[
\sigma_c:=\mathrm{Vol}_{n-1}(Q_c)=(2\varepsilon_c)^{n-1}=2^{n-1}c^{\beta(n-1)},
\qquad
\delta_c:=c^2.
\]
Then it is clear that, as $c\to 0^+$, it holds that
\[
\varepsilon_c\to 0,\qquad \frac{c}{\sigma_c}\to 0,\qquad \delta_c \sigma_c=o(c^2).
\]

Before presenting the detailed construction of the desired domains, we first outline the general strategy. For each small $c>0$, we aim to construct a domain $D_c^*$ which, among other things, satisfies the following two conditions:

\begin{enumerate}
	\renewcommand{\labelenumi}{(\Alph{enumi})}
	\renewcommand{\theenumi}{(\Alph{enumi})}
	\item \label{condA} $\int_{D_c^*} e^{2\langle \alpha+\mathbf{1}, x \rangle}\,\dd x=\infty,~\forall \alpha\in \Z^n \setminus \I;$
	\item \label{condB} $\int_{D_c^*} e^{2\langle \alpha+\mathbf{1}, x \rangle}\,\dd x=c A_\alpha, ~\forall \alpha\in \I.$
\end{enumerate}

The main difficulty lies in fulfilling condition~\ref{condB}. To address this, for each small $c>0$, we first construct a family of domains $D_c(T)$, where $T$ is a parameter taking values in $\R^M$. All domains $D_c(T)$ satisfy condition~\ref{condA}. In general, they do not satisfy condition~\ref{condB}, but only approximate it. We then apply a Brouwer fixed point type argument to show that, for a suitable choice of $T$, the domain $D_c(T)$ satisfies condition~\ref{condB} exactly, thereby yielding the desired domain $D_c^*$.

The first key step is therefore to construct the family of candidate domains $D_c(T)$. These domains consist of several subdomains. To ensure that $D_c(T)$ is connected, we introduce a shell-shaped hub subdomain $O_c \subseteq D_c(T)$ such that every other subdomain is connected to $O_c$ by a thin corridor. The subdomains of $D_c(T)$ are listed below, and their detailed construction will be carried out in the next subsections \S\ref{sec31}--\ref{sec33}.

\smallskip

(1). The hub domain $O_c$ and two tail domains $T_c, \tilde{T}_c$, together with thin corridors connecting them to $O_c$, will be constructed in \S\ref{sec31}. The two tail domains $T_c$ and $\tilde{T}_c$ are used to ensure the condition in (A) for $\alpha\in \N^n \setminus \I$ and $\alpha \in \Z^n \setminus \N^n$, respectively. 

(2). The anchor domains $U_{j,c}$ and the variable domains $V_{j,c}$, $1 \leq j \leq M$, together with thin corridors connecting them to $O_c$, will be constructed in \S\ref{sec32}. 

The domains $U_{j,c}$ and $V_{j,c}$ are carefully chosen $n$-cubes based at points $p_j$, $1 \leq j \leq M$, where the points $\{p_j\}_{j=1}^M$ are given in Corollary \ref{cor:discrete-A}. We associate to each $V_{j,c}$ a real parameter $t_j \in \R$, writing $V_{j,c}=V_{j,c}(t_j)$, and thus $T=(t_1, \cdots, t_M)$ parametrizes the ultimate candidate domains $D_c(T)$ (all subdomains of $D_c(T)$ other than $V_{j,c}$ are independent of $T$).

The role of the anchor domain $U_{j,c}$ is to ensure a fixed intersection with $V_{j,c}$—namely, $U_{j,c}\cap V_{j,c}$ is independent of $T$. This guarantees that $V_{j,c}$ remains connected to $O_c$ and, more importantly, allows us to carry out the geometric analysis primarily on the difference $W_{j,c}:=V_{j,c} \setminus U_{j,c}$, which is crucial for the subsequent Brouwer fixed point type argument.

(3). The final domain $D_c(T)$ is defined as the union of the above subdomains, and thus depends on the parameter $T=(t_1, \cdots, t_M)$ through $V_{j,c}(t_j)$. This enables us to formulate a mapping problem in $\R^M$. Using \eqref{eqndefl}, we show that, after restricting to an $N$-plane in $\R^M$, the mapping satisfies a suitable nondegeneracy condition. We finally apply a Brouwer fixed point type argument to conclude that, for each small $c>0$, there exists $T_c$ such that $D_c(T_c)$ satisfies condition~\ref{condB}, and hence is the desired domain $D_c^*$. This is carried out in \S\ref{sec33}. In the final section \S\ref{sec34}, we show that the logarithmic lift of $D_c^*$ yields the desired domain in Theorem~\ref{thm1}.

\subsection{Shell-shaped hub domain and two tail domains}\label{sec31}

\textbf{Step 1. Construction of the shell-shaped hub domain $O_c$}

\smallskip

We first choose a sufficiently thin spherical shell $O_c$ centered at $0$; that is,
$O_c=\{x \in \R^n: r_{c,1}<\|x\|<r_{c,2}\}$
for some $r_{c,2}>r_{c,1}>0$, such that
\begin{equation}\label{eqnintoc}
	\overline{O_c} \subseteq \mathcal{B}_2 \setminus \overline{\mathcal{B}_1}, \quad \text{and} \quad
	\left\|\int_{O_c} v(x)\,\dd x\right\|\le c^2.
\end{equation}
Note the shell $O_c$ encloses the smaller ball $\mathcal{B}_1$ and lies inside the larger ball $\mathcal{B}_2$. The domain $O_c$ serves as a hub so that every other subdomain of $D_c(T)$ constructed below will be connected to $O_c$.
%\textbf{Step 2. Fix the large cube and the corner ball.}
%The latter is doable as $\mathcal{B}_2$ does not depend on $c$. For instance, we can take a larger ball $\mathcal{B}_3$ with $\Ol{\mathcal{B}_2} \subseteq \mathcal{B}_3,$ and let 
%$\lambda_1=\sup_{x \in \Ol{\mathcal{B}_3}} \|v(x)\|$. Then we just need to make $O_c$ thin enough so that its volume satisfies
%$$\mathrm{Vol_{n}}(O_c) \leq \frac{c^2}{\lambda_1}.$$
%containing all the points $p^{(j)}$, and 

\medskip

\noindent
\textbf{Step 2. Construction of the first tail domain and corridor connecting it to the hub}

\medskip

Recall by Proposition~\ref{prop:tail}, for any multi-index $\alpha\in \N^n$ and $R>1,$ % with $|\alpha|>m,$ 

\begin{equation}\label{eqninttrc1}
\int_{T_R} e^{2\langle \alpha+\mathbf{1}, x \rangle}\,\dd x = \infty,~~~\forall |\alpha|>m;
\end{equation}
%On the other hand, %for each fixed $\alpha\in \I$,
\begin{equation}\label{eqninttrc2}
\int_{T_R} e^{2\langle \alpha+\mathbf{1}, x \rangle}\,\dd x<\infty~~~\text{and}~~ \lim_{R\to \infty} \int_{T_R} e^{2\langle \alpha+\mathbf{1}, x \rangle}\,\dd x=0,~~~\forall \alpha\in \I.
\end{equation}
Since $T_R  \subseteq \{x_1+\cdots+x_n > nR\}$ and $\I$ is finite, we may choose  R to be a very large number $R_c$ (depending on $c$) such that
\begin{equation}\label{eqninttrc3}
\mathcal{B}_2  \cap T_{R_c}  =\emptyset, ~~\text{and}~~\left\|
\int_{T_{R_c}} v(x)\,\dd x
\right\|\le  \frac{c^2}{2}.
\end{equation}
%\[
%(2). \left\| \int_{D_{\mathrm{tail}}^{(R_c)}} v(x)\,\dd x\right\|\le  c^2
%\]
%From now on the tail parameter is fixed to be this $R(c)$.
Pick a line segment $l_c$ in $\R^n$ connecting a point $q_1 \in O_c$ and a point $q_2 \in T_{R_c}$ such that $l_c \cap \Ol{\mathcal{B}_1} =\emptyset.$ 
Let $J_c$ be a thin tubular neighborhood of $l_c$ so that $O_c \cup T_{R_c} \cup J_c$ is connected. Heuristically,  $J_c$ can be viewed as a very thin corridor connecting $T_{R_c}$ to $O_c.$
Making $J_c$ sufficiently thin, we further have
 %(in particular, $J_c$ contains some neighborhoods of $q$ and $q'$) such that the following hold:
%$$(1). O_c \cup J_c \cup D_{\mathrm{tail}}^{(R_c)}~\text{is connected};$$
\begin{equation}\label{eqninttrc4}
 J_c \cap \Ol{\mathcal{B}_1} =\emptyset, ~~\text{and}~~
\left\|
\int_{J_c} v(x)\,\dd x
\right\|\le  \frac{c^2}{2}.
\end{equation}

\begin{remark}\label{rmklcjc}
Since $O_c$ is a spherical shell and  $T_{R_c} \subseteq (0,\infty)^n$ (see Remark \ref{rmk1taildomain}), we can pick $l_c, J_c$ so that  they are both contained in $(0,\infty)^n$. This special choice will only be used to prove part (2) of Theorem \ref{thm2}.
\end{remark}
%Roughly speaking, $J_c$ is a very thin corridor connecting $B_c$ and $D_{\mathrm{tail}}^{(R_c)}$. 
%\[
%B_*\subset Q_{\mathrm{big}}
%\]
%near one corner of $Q_{\mathrm{big}}$.

\noindent
\textbf{Step 3. Construction of the second tail domain and corridor connecting it to the hub}

\medskip

Let $s$ and $s_0$ be as defined in \eqref{eqns0}. Fix any $0< \mu <1.$ Now for every $0<c<1,$ set

\begin{equation}\label{eqndeftildetc}
\widetilde{T}_{c}=\widetilde{T}_{c}(\mu)
:=
\left\{
x\in \mathbb{R}^n:
\mu c s_0
<
S(x):=e^{2x_1}+\cdots+e^{2x_n}
<
cs_0
\right\}.
\end{equation}

\begin{remark}
	We can fix $\mu$ for most of the subsequent arguments, and will treat it as varying in $(0,1)$ only when proving part~(2) of Theorem~\ref{thm2}. Accordingly, for simplicity we write $\widetilde{T}_c(\mu)$ as $\widetilde{T}_c$.
\end{remark}

Note by \eqref{eqns0},  we have $S(x) \geq s>cs_0$ on $\Ol{\mathcal{B}_2}$. Consequently, $\widetilde{T}_c \cap \Ol{\mathcal{B}_2}=\emptyset.$
For each $0<c<1,$ we also define
\[
G_{c}=G_{c}(\mu):=
\left\{
y\in (0,\infty)^n:
\mu c s_0 < y_1+\cdots+y_n < cs_0
\right\}.
\]
Note $\widetilde{T}_{c}$ becomes $G_{c}$ under the change of variables
$y_j = e^{2x_j},~1 \leq j \leq n.$ Note since $G_{c}$ is connected (it is indeed convex), so is $\widetilde{T}_c.$ Moreover, 
$$G_{c} \subseteq (0, cs_0)^n.~\text{Consequently},~\mathrm{Vol_{n}}(G_{c}) \leq (cs_0)^n < \frac{c^2}{N}.$$
Here we have used the fact that $n \geq 2$ and $s_0 \leq  \frac{1}{N}< 1$ to get the last inequality. 
We next prove

\begin{lemma}\label{lmtildetcint}
For a multi-index $\alpha \in \N^n$, we have
\begin{equation}\label{eqninttc1}
\int_{\widetilde{T}_{c}} e^{2\langle \alpha+\mathbf{1}, x \rangle}\,dx < \frac{c^2}{2^n N}.
\end{equation}
Consequently,
\begin{equation}\label{eqninttc2}
	\left\|\int_{\widetilde{T}_{c}} v(x) \,dx \right\| < \frac{c^2}{2^n \sqrt{N}} < \frac{c^2}{2}.
\end{equation}
\end{lemma}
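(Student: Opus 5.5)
We will pass to the variables $y_j=e^{2x_j}$, exactly as in the proof of Proposition~\ref{prop:integral-representation}. Under this change of variables, $\widetilde{T}_c$ becomes $G_c$, and the Jacobian relation $dx=\frac{1}{2^n}\frac{dy}{y_1\cdots y_n}$ gives
\[
\int_{\widetilde{T}_{c}} e^{2\langle \alpha+\mathbf{1}, x \rangle}\,dx=\frac{1}{2^n}\int_{G_c} y^{\alpha}\,dy .
\]
The factor $y^{\alpha+\mathbf{1}}$ coming from the integrand cancels the $y_1\cdots y_n$ in the denominator, leaving $y^\alpha$. Since $\alpha\in\N^n$, the integrand $y^\alpha$ is a genuine polynomial with nonnegative exponents. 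This is the point where the restriction $\alpha\in\N^n$ is used.

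Next we bound $y^\alpha$ on $G_c$. By the observation above, $G_c\subseteq (0,cs_0)^n$, and $cs_0<1$ because $c<1$ and $s_0\le 1/N<1$. Hence every coordinate satisfies $0<y_j<1$ on $G_c$, so $0<y^\alpha\le 1$. It follows that
\[
\int_{\widetilde{T}_c} e^{2\langle \alpha+\mathbf{1},x\rangle}dx\le \frac{1}{2^n}\mathrm{Vol}_n(G_c)\le \frac{(cs_0)^n}{2^n}.
\]
For the strict inequality, we use $n\ge2$ and $s_0\le 1/N$ to get $(cs_0)^n\le c^2 s_0^2\le c^2/N^2<c^2/N$, where the last step uses $N>1$. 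This establishes \eqref{eqninttc1}. One could also bound $y^\alpha$ more crudely by $(cs_0)^{|\alpha|}$, but the bound $y^\alpha\le 1$ already suffices.

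Finally, $v(x)$ has $N$ components, each of the form $e^{2\langle\alpha+\mathbf{1},x\rangle}$ with $\alpha\in\I\subseteq\N^n$. Each component integral is therefore positive and bounded by $c^2/(2^nN)$. The Euclidean norm of the integrated vector is then at most $\sqrt{N}\cdot c^2/(2^nN)=c^2/(2^n\sqrt N)$, and this is less than $c^2/2$ since $2^n\ge4$ and $N\ge1$. This gives \eqref{eqninttc2}.

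No step is a real obstacle. The only care needed is in bookkeeping the Jacobian factor so that the exponent $\alpha$, rather than $\alpha-\mathbf{1}$, appears, and in keeping every inequality strict. The estimate rests on the inclusion $G_c\subseteq(0,cs_0)^n$ together with $cs_0<1$.
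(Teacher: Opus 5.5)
Your proposal is correct and follows the paper's route: substitute $y_j=e^{2x_j}$ to get $\frac{1}{2^n}\int_{G_c}y^\alpha\,dy$, bound $y^\alpha\le 1$ using $G_c\subseteq(0,cs_0)^n$ with $cs_0<1$, and use $\mathrm{Vol}_n(G_c)\le (cs_0)^n<c^2/N$. Since each component bound is strict, you can sharpen your ``at most'' for the norm to the strict inequality $<\frac{c^2}{2^n\sqrt N}$ required in the statement.
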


\begin{proof}
To study the integral in \eqref{eqninttc1}, we apply the change of variables $y_j = e^{2x_j},~1 \leq j \leq n,$ to derive 
\begin{comment}
\[
dx = \frac{1}{2^n}\frac{dy}{y_1\cdots y_n},
\qquad
e^{2\langle \alpha+1,x\rangle}dx
=
\frac{1}{2^n} y^\alpha \,dy.
\]
\end{comment}
\[
\int_{\widetilde{T}_{c}} e^{2\langle \alpha+\mathbf{1}, x \rangle} \,dx
=
\frac{1}{2^n}
\int_{G_c} y^\alpha \,dy.
\]
Furthermore, for $y=(y_1, \cdots, y_n) \in G_{c},$ we have each $0< y_j< cs_0<1.$ Consequently,
$$y^\alpha \leq 1,~\text{and thus}~ \int_{G_{c}} y^\alpha \,dy \leq \mathrm{Vol_{n}}(G_{c}) < \frac{c^2}{N}.$$
Then \eqref{eqninttc1} follows immediately, and together with the definition of $v$ in $\S$\ref{sec21}, it yields \eqref{eqninttc2}.
\end{proof}

\begin{remark}\label{rmktildetc}
The above lemma shows that adding $\widetilde{T}_{c}$ to our domain under construction does not impact the integrability of $e^{2\langle \alpha+\mathbf{1}, x \rangle}$ for $\alpha \in \N^n$. On the other hand, by a very similar argument as in Lemma \ref{lmtildetcint}, one can show that $e^{2\langle \alpha+\mathbf{1}, x \rangle}$ is not integrable on $\widetilde{T}_{c}$  for $\alpha \in \Z^n \setminus \N^n $, so that condition \ref{condA}  is fulfilled for all $\alpha \in \Z^n \setminus \N^n $. But we will not directly use this property. It is indeed more convenient to use the spirit of this property on the logarithmic lift of our constructed domain. See the proof of Theorem \ref{thm1} in \S\ref{sec34}.
\end{remark}

Pick a path $\widetilde{l}_{c}$ in $\R^n$ connecting a point $\widetilde{q}_1 \in O_c$ and a point $\widetilde{q}_2 \in \widetilde{T}_{c}$ such that $\widetilde{l}_{c} \cap \Ol{\mathcal{B}_1} =\emptyset.$ 
Let $\widetilde{J}_c$ be a thin tubular neighborhood of $l_c$ so that $O_c \cup \widetilde{J}_c \cup \widetilde{T}_{c}$ is connected. %Heuristically,  $J_c$ can be viewed as a very thin corridor connecting $T_{R_c}$ to $O_c.$
Making $\widetilde{J}_c$ sufficiently thin, we further have
%(in particular, $J_c$ contains some neighborhoods of $q$ and $q'$) such that the following hold:
%$$(1). O_c \cup J_c \cup D_{\mathrm{tail}}^{(R_c)}~\text{is connected};$$
\begin{equation}\label{eqninttc3}
	\widetilde{J}_c \cap \Ol{\mathcal{B}_1} =\emptyset, ~~\text{and}~~
	\left\|
	\int_{\widetilde{J}_c} v(x)\,\dd x
	\right\|\le  \frac{c^2}{2}.
\end{equation}

\begin{remark}\label{rmktildelj}
Note since $\widetilde{q}_1 \in O_{c} \subseteq \mathcal{B}_2,$  by the definition \eqref{eqns0} of $s$ and $s_0$, $S(\widetilde{q}_1) \geq s> \mu cs_0.$ Since  $\widetilde{q}_2 \in \widetilde{T}_{c}$, by the definition of $\widetilde{T}_{c},$ $S(\widetilde{q}_2)> \mu cs_0.$  Since $\{x \in \R^n: a< S(x)< b\} $ is connected for any $b>a>0,$ we can carefully choose $\widetilde{l}_{c}$ and make $\widetilde{J}_c$ sufficiently thin, such that we additionally have
$\inf_{x \in \widetilde{J}_c} S(x)>\mu cs_0.$ Consequently,
$$\inf_{x \in \widetilde{T}_{c} \cup \widetilde{J}_c} S(x)=\mu cs_0.$$
This will only be used to prove part (2) of Theorem \ref{thm2}.
\end{remark}

\smallskip

\begin{remark}\label{rmkmequal1case}
If $m=1$, then we can stop here and no further constructions are needed. Indeed, taking  $D:=O_c \cup (T_{R_c} \cup J_c) \cup (\widetilde{J}_c \cup \widetilde{T}_{c}),$
the logarithmic lift $\Omega:=\mathcal{L}(D) \subseteq \C^n$, after a linear transformation, is the desired domain. To see this, by \eqref{eqninttrc1}, \eqref{eqninttrc2} and the discussion in Remark \ref{rmktildetc},  we note that
$e^{2\langle \alpha+\mathbf{1}, x \rangle}, \alpha \in \Z^n,$ is integrable on $D$ if and only if $\alpha \in \N^n$  and $|\alpha| \le m=1.$ Consequently, using Proposition \ref{prop:intloglift}, one can show the Bergman space of $\om$ is spanned by $\{1, z_1,\cdots, z_n\}.$ Since $\om$ is Reinhardt, its Bergman kernel must take the form 
$$K_{\om}=b_0 +b_1 |z_1|^2+\cdots +b_n |z_n|^2~~~\text{with all}~~b_i>0.$$
By the transformation law of the Bergman kernel, applying an appropriate linear transformation to $\om$, we can obtain a new domain in $\C^n$ whose Bergman kernel is given by
$$b(1+|z_1|^2+\cdots  +|z_n|^2)~~~\text{for some}~~b>0.$$ 
This is precisely the desired Bergman kernel for $m=1$. However, this argument does not work for the general case $m \ge 2$ due to the more complicated nature of the moment data condition (\ref{condII}) in \S\ref{sec1}. %and thus further constructions are needed.
\end{remark}

%\textbf{Step 4. Bounded skeleton inside the large cube.}

\subsection{The anchor domain and the variable domain}\label{sec32}

\textbf{Step 1. Construction of anchor domain  $U_{j,c}$ and corridor connecting it to the hub}

\smallskip

Inside each $B(p_j,\rho), 1 \leq j \leq M,$ we place an anchor box
\[
U_{j,c}:=p_j+\bigl((-4\delta_c,0)\times Q_c\bigr).
\]
The  volume of $U_{j,c}$ with respect to the Lebesgue measure in $\mathbb{R}^n$ is given by:
$$\mathrm{Vol}_{n}(U_{j,c})=4 \delta_c \cdot \mathrm{Vol}_{n-1}(Q_c)= 4 \delta_c\sigma_c=2^{n+1} c^2 c^{\beta(n-1)}.$$
Recall that the choices of $M$, $\rho$, and $\kappa_1$ depend only on $m$ and $n$. Therefore, we may choose a small constant $c_0\in(0,1)$, depending only on $m$ and $n$, such that for every $0<c<c_0$, the conditions in \eqref{eqnajcinb} and \eqref{eqncbeta} are satisfied.
\begin{equation}\label{eqnajcinb}
\Ol{U_{j,c}} \subseteq B(p_j,\rho),~\forall 1 \leq j \leq M, ~~~\text{thus in particular},~~ 4\delta_c < \rho;
\end{equation}
\begin{equation}\label{eqncbeta}
2^{n+1} c^{\beta(n-1)} \leq \frac{1}{2 M \kappa_1},~~\text{where}~\kappa_1~\text{is given in \eqref{eqnka12}}.
\end{equation}
Consequently, we have 
\begin{equation}\label{eqnujc}
\left\|
\int_{U_{j,c}} v(x)\,\dd x
\right\|\le \kappa_1 \mathrm{Vol_{n}}(U_{j,c})  \leq   \frac{c^2}{2M},~~~\forall 1 \leq j \leq M.
\end{equation}
Write for each $1 \leq j \leq M$,
$$\tilde{p}_j:=p_j+(-2\delta_c, 0, \cdots, 0) \in U_{j,c} ~~~~~\text{and}~~~~~~\hat{p}_j:=p_j+(-2\rho, 0, \cdots, 0) \in \partial B(p_j,2\rho).$$
Let $\alpha_{j, 1}$ be the following line segment connecting $\tilde{p}_j$ and $\hat{p}_j$:
$$\alpha_{j, 1}(t):= p_j+(-t, 0, \cdots, 0), t\in [2\delta_c, 2\rho].$$
Choose a path $\alpha_{j, 2}$ in $\mathcal{B}_2$ connecting $\hat{p}_j$ to a point in $O_c$ such that $\alpha_{j, 2}$ does not intersect  $B(p_k,2\rho)$ for any $1 \leq k \leq M.$
This is doable because, if $\alpha_{j, 2}$ intersects $B(p_k,2\rho)$ for some $k$, we can replace the portion inside $B(p_k,2\rho)$ with a curve lying on the boundary $\partial B(p_k,2\rho)$, thereby avoiding $B(p_k,2\rho)$. %Similarly, we can make $\beta_{j}$ avoid $B(p^{(j)},\rho)$.

Let $\alpha_j:=\alpha_{j, 1}+ \alpha_{j, 2}$ be the concatenation of $\alpha_{j, 1}$ and $\alpha_{j, 2}$. Then 
$\alpha_j$ is a path connecting $\tilde{p}_j$ to a point in $O_c$.
Let $\Gamma_{j, c} \subseteq \mathcal{B}_2$ be a thin tubular neighborhood of $\alpha_j$ such that the following conditions in \eqref{eqngammajc} and \eqref{eqngammabpj} hold for every $1 \leq j \leq M,$
\begin{equation}\label{eqngammajc}
\left\|
\int_{\Gamma_{j, c}} v(x)\,\dd x
\right\|\le  \frac{c^2}{2M}.
\end{equation}

\begin{equation}\label{eqngammabpj}
\text{If}~p_j+(x_1, x_2, \cdots, x_n) \in \Gamma_{j, c} \cap B(p_j,\rho),~\text{then}~x_1<-\delta_c.
\end{equation}
The latter is feasible because $\{\alpha_j(t)\} \cap B(p_j,\rho)=\{\alpha_{j,1}(t)\} \cap B(p_j,\rho)$ and because of the specific form of $\alpha_{j,1}$.

%Roughly speaking,  $\Gamma_{j, c}$ is a thin corridor connecting $A_{j,c}$ to $B_c$. 
%let
%\[
%B_{c,\mathrm{exit}}
%\]
%denote the thin corridor joining $B_*$ to the chosen exit point of the large cube.

%%%%%%%%%%%%%%%%%%%%%%%%%%%%%%%%%%%%%%%

\begin{comment}

Using Lemma~\ref{lem:avoidance}, choose pairwise disjoint thin polygonal corridors from $B_*$ to the branch
neighborhoods $B(p^{(j)},\rho/2)$, and one extra thin polygonal corridor from $B_*$ to a chosen exit point on a face
of $Q_{\mathrm{big}}$ that leads toward the tail region.

Inside $B(p^{(j)},\rho/2)$ place an anchor box
\[
A_{j,c}:=p^{(j)}+\bigl((-2\delta_c,0)\times U_c\bigr).
\]
Let $R_{j,c}$ denote the thin corridor joining $B_*$ to $A_{j,c}$, and let
\[
B_{c,\mathrm{exit}}
\]
denote the thin corridor joining $B_*$ to the chosen exit point of the large cube.

By taking all these corridors sufficiently thin, we may ensure that 

\end{comment}

%%%%%%%%%%%%%%%%%%%%%%%%%%%%%%%%%%%%

%\bigskip

Next define the following skeleton domain inside $\mathcal{B}_2:$
\[
S_c^{\mathrm{bd}}
:=
\bigcup_{j=1}^M (\Gamma_{j,c}\cup U_{j,c}).
\]
Note $S_c^{\mathrm{bd}}$ is an open subset $\mathcal{B}_2$, and $O_c \cup S_c^{\mathrm{bd}}$ is connected. Furthermore,  by \eqref{eqnujc} and \eqref{eqngammajc}, we have 

\begin{comment}

 and that the volume of $S_c^{\mathrm{bd}}$  satisfies
\[
\mathrm{Vol}(S_c^{\mathrm{bd}})\le \frac{c^2}{3M_0},
\]
where
\[
M_0:=\sup_{x\in Q_{\mathrm{big}}} |v(x)|.
\]
Consequently,
\end{comment}

\begin{equation}\label{eqnscbd}
\left\|\int_{S_c^{\mathrm{bd}}} v(x)\,\dd x\right\|\le M \left( \frac{c^2}{2M} + \frac{c^2}{2M} \right)=c^2.
\end{equation}
\begin{comment}
\textbf{Step 5. Bridge from the large cube to the tail.}

Connect the exit point of $Q_{\mathrm{big}}$ to the initial portion of the tail $D_{\mathrm{tail}}^{(R(c))}$ by an open
bridge
\[
B_{c}^{\mathrm{tail}}
\]
lying outside the branch neighborhoods $B(p^{(j)},\rho/2)$ and chosen so thin that
\[
\left\|\int_{B_{c}^{\mathrm{tail}}} v(x)\,\dd x\right\|\le \frac{c^2}{3}.
\]
\end{comment}
Now we define the full fixed background domain
\begin{equation}\label{eqndefscr}
S_{c,R_c}
:=
(T_{R_c} \cup J_c) \cup (\widetilde{T}_{c} \cup \widetilde{J}_{c}) \cup  O_c \cup S_c^{\mathrm{bd}}.
\end{equation}
Note this set  $S_{c,R_c}$ is open and connected, as all points are connected to  $O_c.$ Furthermore, the norm of $\int_{S_{c,R_c}} v(x)\,\dd x$ is bounded above by the sum of the norms of the integrals of $v(x)$ over the six domains appearing on the right-hand side of \eqref{eqndefscr}. Consequently, by \eqref{eqnintoc}, \eqref{eqninttrc3}, \eqref{eqninttrc4}, \eqref{eqninttc2}, \eqref{eqninttc3}, and \eqref{eqnscbd},
\begin{equation}\label{eqnintscrcpart}
\left\|\int_{S_{c,R_c}} v(x)\,\dd x\right\|
\le 4c^2.
\end{equation}
\begin{comment}
In particular,
\[
\left|\int_{S_{c,R(c)}} v(x)\,\dd x\right|=o(c)
\qquad (c\to 0^+).
\]
\end{comment}
Moreover, for every multi-index $\alpha \in \N^n$ with $|\alpha|\ge m+1$, \eqref{eqninttrc1} gives
\begin{equation}\label{eqnintscrcpartm1}
\int_{S_{c,R_c}} e^{2\langle \alpha+\mathbf{1}, x \rangle}\,\dd x
\ge
\int_{T_{R_c}} e^{2\langle \alpha+\mathbf{1}, x \rangle}\,\dd x
=
+\infty.
\end{equation}

\smallskip

\noindent
\textbf{Step 2. Adding the variable domains $V_{j,c}(t)$.}

\smallskip

Recall $a_1,\cdots,a_M$ denote the positive coefficients given in Corollary~\ref{cor:discrete-A}. Set 
$$a^*:=2 \max_{1 \leq j \leq M} a_j>0.$$
Note $a^*$ and $\rho$  depend only on $m$ and $n$.  Then we can shrink $c_0$ to a  smaller number, which only depends on $m$ and $n$, such that

$$V_{j, c}^*:=p_j+\left((-\delta_c,\ a^* c/\sigma_c)\times Q_c\right)  \subseteq B(p_j,\rho)~~~~\forall  1 \leq j \leq M, 0< c < c_0.$$
For $0<c<c_0$, we define for each $1\leq j\leq M$,
\[
V_{j,c}(t):=p_j+\bigl((-\delta_c,\ t/\sigma_c)\times Q_c\bigr)
\subseteq V_{j,c}^*,\qquad t\in(0,a^*c).
\]
Let $U_{j,c}$, $1\leq j\leq M$, be the anchor domains constructed in Step 1. Note that
\begin{equation}\label{eqnajcvjc}
	V_{j,c}(t)\cap U_{j,c}
	=p_j+\bigl((-\delta_c,0)\times Q_c\bigr),
\end{equation}
which is independent of $t$. Consider the difference
\begin{equation}\label{eqndefcjc}
	W_{j,c}(t):=V_{j,c}(t)\setminus U_{j,c}
	=
	p_j+\bigl([0,\ t/\sigma_c)\times Q_c\bigr).
\end{equation}
Its volume is
\begin{equation}\label{eqnvolcjc}
	\mathrm{Vol}_{n}(W_{j,c}(t))=t,~\text{as}~\mathrm{Vol}_{n-1}(Q_c)=\sigma_c.
\end{equation}
%Because $\delta_c\to 0$, $\varepsilon_c\to 0$, and $t/\sigma_c=O(c/\sigma_c)\to 0$ when $t\sim c$, for sufficiently small
%$c$ and all $t$ in a neighborhood of $ca_j$ we have
%\[
Since $V_{j,c}(t)\subseteq B(p_j,\rho),$ and  $\{B(p_j,2\rho)\}_{j=1}^M$ are
pairwise disjoint, we see
\begin{equation}\label{eqnvjvk}
W_{j,c}(t) \cap W_{k,c}(t') =\emptyset,~~~\forall 1 \leq j \neq k \leq M~\text{and}~t, t'\in(0,a^*c).
\end{equation}
Moreover, by construction, $V_{j,c}(t)$ is disjoint from
$S_{c,R_c}\setminus U_{j,c}$ for each $j$. Indeed, \eqref{eqngammabpj}
implies that $V_{j,c}(t)\cap \Gamma_{j,c}=\emptyset$. Furthermore, every
other subdomain of $S_{c,R_c}$ lies outside $B(p_j,\rho)$. As a result,
for each $1\le j\le M$,
\begin{equation}\label{eqnvjajcj}
	V_{j,c}(t)\cap S_{c,R_c}=V_{j,c}(t)\cap U_{j,c},\quad
	V_{j,c}(t)\cup S_{c,R_c}=S_{c,R_c}\cup W_{j,c}(t),\quad
	S_{c,R_c}\cap W_{j,c}(t)=\emptyset.
\end{equation}

%he extra exit corridor
%$B_{c,\mathrm{exit}}$, from the bridge $B_{c}^{\mathrm{tail}}$, and from the tail itself.

\subsection{Analysis on their union $D_{c}(T)$}\label{sec33}

%\bigskip

We keep the same notation as above, and further set
$$\mathcal{T}_c=\{T=(t_1, \cdots, t_M) \in \R^M: 0< t_j< a^*c~\text{for all}~1 \leq j \leq M\}.$$
For each $T \in \mathcal{T}_c$, we define the open set
\begin{equation}\label{eqndefdc}
D_{c}(T):=S_{c,R_c}\cup \left(\bigcup_{j=1}^M V_{j,c}(t_j)\right).
\end{equation}
Since each branch $V_{j,c}(t_j)$ intersects the fixed background domain $S_{c,R_c}$, it follows that $D_c(T)$ is connected.
We also record the following fact in the next remark. Readers may note that it is not needed for most of the subsequent arguments, and will be used only in the proof of part~(2) of Theorem~\ref{thm2}.

\begin{remark}\label{rmkminsxdc}
	Let $s_0$ be as defined in \eqref{eqns0}, and let $\mu$ be as introduced in \eqref{eqndeftildetc}. Then the following property holds for $D_c$:
	\begin{equation}\label{eqninfsxdc}
		\inf_{x\in D_c} S(x)=\mu cs_0.
	\end{equation}
To see this, recall that $D_c$ consists of $T_{R_c}\cup J_c$, $\widetilde{T}_c\cup\widetilde{J}_c$, and some additional subdomains contained in $\mathcal{B}_2$. By Remark~\ref{rmklcjc}, if $x\in T_{R_c}\cup J_c$, then $x\in(0,\infty)^n$ and hence $S(x)>n>\mu cs_0$. By the definition of $s_0$ in \eqref{eqns0}, we have $S(x)>s_0>\mu cs_0$ for all $x\in\mathcal{B}_2$. Then, \eqref{eqninfsxdc} follows from Remark~\ref{rmktildelj}.
\end{remark}

Next, by \eqref{eqnvjajcj} and \eqref{eqnvjvk}, we note that for every integrable $f$,
\begin{equation}\label{eqnintdc}
	\int_{D_c(T)} f
	=
	\int_{S_{c,R_c}} f
	+
	\sum_{j=1}^M \int_{W_{j,c}(t_j)} f.
\end{equation}
Now we define a moment map as follows. For each $T\in\mathcal{T}_c$, set
\begin{equation}\label{eqnphi}
	\Phi_c(T):=
	\left(
	\int_{D_c(T)} e^{2\inner{\alpha+\mathbf{1}}{y}}\,\dd y
	\right)_{\alpha\in\I}
	\in\R^N.
\end{equation}
It is clear that $\Phi_c$ is a well-defined map on $\mathcal{T}_c$.
Set
\[
r_c:=\frac12 c\min_{1\le j\le M} a_j>0,
\]
and denote by $\mathcal{B}_N(r_c)$ the ball in $\R^N$ centered at $0$ with radius $r_c$:
\[
\mathcal{B}_N(r_c):=
\left\{
\tau=(\tau_1,\cdots,\tau_N)\in\R^N:
\sum_{j=1}^N \tau_j^2<(r_c)^2
\right\}.
\]
Write
\[
T_c^0:=(ca_1,\cdots,ca_M), \qquad
\tau^\sharp:=(\tau,0,\cdots,0)\in\R^M
\quad \text{for each } \tau\in\R^N,
\]
where the last $M-N$ entries are zero.
\begin{comment}
Let
\[
E:=\operatorname{span}\{e_1,\cdots,e_N\}\subset \R^M.
\]
We vary only the first $N$ coordinates.

Set
\[
r_c:=\frac12 c\min_{1\le j\le N} a_j.
\]
\end{comment}
By the definitions of $\mathcal{T}_c$ and $r_c$,
\[
T_c^0+\tau^\sharp\in\mathcal{T}_c,
\qquad \forall\,\tau\in\Ol{\mathcal{B}_N(r_c)}.
\]
As a result, we obtain a well-defined map $\Psi_c$ from
$\Ol{\mathcal{B}_N(r_c)}$ to $\R^N$ given by
\begin{equation}\label{eqnpsi}
	\Psi_c(\tau):=\Phi_c(T_c^0+\tau^\sharp),
	\qquad \tau\in\Ol{\mathcal{B}_N(r_c)}.
\end{equation}
\begin{comment}
for $e\in E$ with $|e|<r_c$, each of the first $N$ coordinates
\[
t_j=ca_j+e_j
\]
remains positive and still satisfies $t_j\sim c$. The remaining coordinates are fixed:
\[
t_j=ca_j,\qquad j>N.
\]
%\textbf{Step 9. Moment map.}
Restrict to the affine slice $t^{0,c}+E$:
\[
G_{c,R(c)}(e):=F_{c,R(c)}(t^{0,c}+e),
\qquad e\in E,\ |e|<r_c.
\]
\end{comment}
Our goal is to solve
\[
\Psi_c(\tau)=cA~~~~\text{with}~~~ \tau \in \Ol{\mathcal{B}_N(r_c)}.
\]
The strategy is to make use the Brouwer fixed point type theorem--Proposition \ref{prop:brouwer}. Before applying it, we need to verify that both assumptions of Proposition \ref{prop:brouwer} are satisfied. In order to do that, we define a function $I_{j,\alpha,c}$ (which we will write as $I_{j,\alpha}$ for simplicity, although it depends on $c$)
%\textbf{Step 10. Jacobian estimate.}
for each $\alpha\in \I$ and $1 \le j\le N$, 
\[
I_{j,\alpha}(\tau):=\int_{W_{j,c}(ca_j+\tau_j)} e^{2\inner{\alpha+\mathbf{1}}{y}}\,\dd y,~~~~~~\text{with}~\tau=(\tau_1, \cdots, \tau_N) \in \Ol{\mathcal{B}_N(r_c)}.
\]
Then by \eqref{eqnintdc}, \eqref{eqnphi} and \eqref{eqnpsi}, we have $\Psi_c=\left(
\Psi_{c, \alpha}
\right)_{\alpha\in \I}$, where
\begin{equation}\label{eqnpsica}
\Psi_{c, \alpha}(\tau)=\int_{S_{c,R_c}} e^{2\inner{\alpha+\mathbf{1}}{y}}\,\dd y+\sum_{j=1}^N I_{j,\alpha}(\tau)
+
\sum_{j=N+1}^M \int_{W_{j,c}(ca_j)} e^{2\inner{\alpha+\mathbf{1}}{y}}\,\dd y.
\end{equation}
Writing $y=p_j+(x_1,x')$, we have
\[
I_{j,\alpha}(\tau)
=
\int_0^{\frac{ca_j+\tau_j}{\sigma_c}}\int_{U_c}
e^{2\inner{\alpha+\mathbf{1}}{\,p_j+(x_1,x')}}\,\dd x'\,\dd x_1.
\]
Then it is clear that $I_{j,\alpha}$ is a  $C^0-$function in $\Ol{\mathcal{B}_N(r_c)}$ and is $C^1$ in $\mathcal{B}_N(r_c)$. Moreover, writing $\tau_{j, c}:=\frac{ca_j+\tau_j}{\sigma_c},$ we have in $\mathcal{B}_N(r_c):$
\begin{equation}\label{eqnijkderivative}
\frac{\partial I_{j,\alpha}}{\partial \tau_k} \equiv 0,~\forall 1 \leq j \neq k \leq N.
\end{equation}

\begin{equation}\label{eqnijjderivative}
\frac{\partial I_{j,\alpha}}{\partial \tau_j}(\tau)
=
\frac1{\sigma_c}\int_{U_c}
e^{2\inner{\alpha+\mathbf{1}}{\,p_j+(\tau_{j, c},x')}}\,\dd x', ~\forall 1 \leq j \leq N.
\end{equation}
To analyze the integrand in \eqref{eqnijjderivative}, we first establish a property of the function
$v_{\alpha}(y):=e^{2\inner{\alpha+\mathbf{1}}{y}}.$

\begin{lemma}\label{lmheal}
Let $\kappa_2$ be as in \eqref{eqnka12}.  For $y, \hat{y} \in  \mathcal{B}_2$, we have
\begin{equation}\label{eqnmvt}
|v_{\alpha}(y)-v_{\alpha}(\hat{y})|\leq  \kappa_2 \|y-\hat{y}\|.
\end{equation}
Consequently, taking $y=p_j$ and $\hat{y}=p_j+(\tau_{j, c},x')$ with $x' \in Q_c,$ we get
\begin{equation}\label{eqnepjdiff}
\left| e^{2\inner{\alpha+\mathbf{1}}{\,p_j}}- e^{2\inner{\alpha+\mathbf{1}}{\,p_j+(\tau_{j, c},x')}}\right| \leq   \hat{\lambda}_c,
\end{equation}
where $ \hat{\lambda}_c:=\kappa_2 (a^* \frac{c}{\sigma_c}+\sqrt{n-1}\varepsilon_c).$
\end{lemma}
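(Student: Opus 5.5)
The plan is to apply the mean value theorem on the convex set $\mathcal{B}_2$, and then to check that the specific points $p_j+(\tau_{j,c},x')$ actually lie in $\mathcal{B}_2$ and are at controlled distance from $p_j$.

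For \eqref{eqnmvt}, fix $y,\hat y\in\mathcal{B}_2$. Since $\mathcal{B}_2$ is a ball, it is convex, so the segment $y+u(\hat y-y)$, $u\in[0,1]$, lies in $\mathcal{B}_2\subseteq\Ol{\mathcal{B}_2}$. By the fundamental theorem of calculus along this segment,
\[
v_\alpha(\hat y)-v_\alpha(y)=\int_0^1 \nabla v_\alpha\bigl(y+u(\hat y-y)\bigr)\cdot(\hat y-y)\,\dd u .
\]
Here $\nabla v_\alpha$ is one row of the Jacobian $v'$. Hence $\|\nabla v_\alpha(z)\|\le\|v'(z)\|\le\kappa_2$ for every $z\in\Ol{\mathcal{B}_2}$, with either the operator norm or the Frobenius norm for $v'$, since both dominate the norm of a single row. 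Cauchy--Schwarz then gives $|v_\alpha(\hat y)-v_\alpha(y)|\le\kappa_2\|\hat y-y\|$.

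For \eqref{eqnepjdiff}, take $y=p_j$ and $\hat y=p_j+(\tau_{j,c},x')$ with $x'\in Q_c$. First, $\hat y\in V_{j,c}^*\subseteq B(p_j,\rho)\subseteq\mathcal{B}_1\subseteq\mathcal{B}_2$. This holds because
\[
0<\tau_{j,c}=\frac{ca_j+\tau_j}{\sigma_c}\le \frac{ca_j+r_c}{\sigma_c}<\frac{a^*c}{\sigma_c},
\]
using $|\tau_j|\le r_c\le \tfrac12 ca_j$ and $a^*=2\max_k a_k$. Also $p_j\in\mathcal{B}_1$ by construction. Next,
\[
\|\hat y-p_j\|\le|\tau_{j,c}|+\|x'\|\le a^*\frac{c}{\sigma_c}+\sqrt{n-1}\,\varepsilon_c ,
\]
because every coordinate of $x'\in Q_c=(-\varepsilon_c,\varepsilon_c)^{n-1}$ has absolute value less than $\varepsilon_c$. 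Plugging this bound into \eqref{eqnmvt} gives $\hat\lambda_c$.

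There is no real obstacle here. The only points needing care are the norm convention for $v'$ in the definition of $\kappa_2$, which I handle by bounding a single row by the whole matrix norm, and confirming that $\hat y$ stays in $\mathcal{B}_2$ for all $\tau\in\Ol{\mathcal{B}_N(r_c)}$ and $0<c<c_0$, which follows from the choice of $c_0$ making $V_{j,c}^*\subseteq B(p_j,\rho)$.
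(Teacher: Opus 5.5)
Your proposal is correct and follows essentially the same route as the paper: the mean value theorem along segments in the convex ball $\mathcal{B}_2$, then the check that $0<\tau_{j,c}<a^*c/\sigma_c$ puts $p_j+(\tau_{j,c},x')$ in $V_{j,c}^*\subseteq B(p_j,\rho)\subseteq\mathcal{B}_2$, followed by the bound $\|\hat y-p_j\|\le|\tau_{j,c}|+\sqrt{n-1}\,\varepsilon_c$. Your explicit verification that $\tau_{j,c}$ lies in this range, and your remark on the norm convention for $v'$, are slightly more careful than the paper's text.
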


\begin{proof}
Since $\mathcal{B}_2$ is convex, we can apply the mean value theorem to $v_{\alpha}(ty+(1-t)\hat{y}), t\in [0, 1]$ to obtain \eqref{eqnmvt}.  
To prove \eqref{eqnepjdiff}, we note that since $|\tau_j| \leq r_c,$ we have $0< \tau_{j, c}< \frac{a^* c}{\sigma_c},$ and thus
$p_j+(\tau_{j, c},x') \in V_{j, c}^* \subseteq B(p_j,\rho) \subseteq \mathcal{B}_2$ for every $x' \in Q_c.$
Therefore, in \eqref{eqnmvt}, we may take $y=p_j$ and $\hat{y}=p_j+(\tau_{j,c},x')$, where $x'\in Q_c$, to obtain

\begin{equation}\label{eqnmvttoepj}
|v_{\alpha}(y)-v_{\alpha}(\hat{y})|\leq \kappa_2 \|y-\hat{y}\| =\kappa_2 \left(|\tau_{j, c}|^2+ \|x'\|^2\right)^{1/2} \leq \kappa_2 \left(|\tau_{j, c}|+\|x'\|\right).
\end{equation}
Finally, we note that $|\tau_{j,c}|<\dfrac{a^*c}{\sigma_c}$ and $\|x'\|\le \sqrt{n-1}\,\varepsilon_c$. These facts, together with \eqref{eqnmvttoepj}, yield \eqref{eqnepjdiff}.
\end{proof}

\begin{comment}
Since $|x'|\le \varepsilon_c$ and $t/\sigma_c=O(c/\sigma_c)\to 0$ uniformly on the parameter ball,
\[
I'_{j,\alpha}(t)
=
e^{2\inner{\alpha+1}{p^{(j)}}}+O\!\left(\varepsilon_c+\frac{c}{\sigma_c}\right),
\]
uniformly for $|e|<r_c$.
\end{comment}

By \eqref{eqnijjderivative}, since $\mathrm{Vol_{n-1}}(Q_c)=\sigma_c$, we have for every $1 \le j \le N,$

$$\frac{\partial I_{j,\alpha}}{\partial \tau_j} (\tau) - v_{\alpha}(p_j)= \frac{\partial I_{j,\alpha}}{\partial \tau_j} (\tau) -  e^{2\inner{\alpha+\mathbf{1}}{\,p_j}}
=\frac1{\sigma_c}\int_{Q_c}
\left(e^{2\inner{\alpha+\mathbf{1}}{\,p_j+(\tau_{j, c},x')}}- e^{2\inner{\alpha+\mathbf{1}}{\,p_j}}\right)\,\dd x'.$$
By the above equation and Lemma \ref{lmheal}, we get 
\begin{equation}\label{eqnijajdiffestimate}
\left|\frac{\partial I_{j,\alpha}}{\partial \tau_j} (\tau) -  e^{2\inner{\alpha+\mathbf{1}}{\,p_j}} \right| \leq \hat{\lambda}_c~~~\text{for all}~~~ \tau \in \mathcal{B}_N(r_c).
\end{equation}
Write $D \Psi_c$ for the Jacobian matrix of $\Psi_c$ with respect to $(\tau_1, \cdots, \tau_N).$ That is, for each $1 \le j \le N,$ the $j$th column of $D \Psi_c$ is given by $\left(\frac{\partial \Psi_{c, \alpha}}{\partial \tau_j}\right)_{\alpha\in \I}$. Then by \eqref{eqnpsica}, \eqref{eqnijkderivative} and \eqref{eqnijajdiffestimate}, we have
\begin{equation}\label{eqndpsic}
D \Psi_c(\tau)=L+X_c(\tau),~~~~\tau \in \mathcal{B}_N(r_c),
\end{equation}
where $L$ is given by $\eqref{eqndefl}$ and $X_c=\left((X_c)_{ij}\right)_{1 \le i, j \le N}$ is a matrix-valued function in $\mathcal{B}_N(r_c)$ satisfying every entry
$$|(X_c)_{ij}| \leq  \hat{\lambda}_c~\text{in}~\mathcal{B}_N(r_c),~~~1 \le i, j \le N.$$
Consequently, as an operator from $\R^N$ to $\R^N,$ the operator norm of $X_c(\tau)$ satisfies

\[
\|X_c(\tau)\|_{\mathrm{op}} \leq \lambda_c:=N \hat{\lambda}_c,~~~~\forall~\tau \in \mathcal{B}_N(r_c).
\]
This, together with \eqref{eqndpsic}, yields
\begin{equation}\label{eqncondition1psi}
\sup_{\tau \in \mathcal{B}_N(r_c)}\|D \Psi_c(\tau)-L\|_{\mathrm{op}} \le \lambda_c.
\end{equation}
Note that $L$ depends only on $m$ and $n$, and that $\lambda_c=o(c)$ as $c \to 0^+$. By further shrinking $c_0$ to a smaller number, we may assume
\begin{equation}\label{eqncondition2psi}
	\|L^{-1}\|_{\mathrm{op}} \lambda_c\le \frac12,~~~~\forall 0 <c < c_0.
\end{equation}
Note that this new constant $c_0$ can still be chosen to depend only on $m$ and $n$, in view of the expressions for $\lambda_c$ and $\hat{\lambda}_c$.
As a consequence of \eqref{eqncondition1psi} and \eqref{eqncondition2psi},
\begin{equation}\label{eqncondition12psi}
	\sup_{\tau \in \mathcal{B}_N(r_c)}\|D \Psi_c(\tau)-L\|_{\mathrm{op}}\le \frac{1}{2\|L^{-1}\|_{\mathrm{op}}},~~~~\forall 0 <c < c_0.
\end{equation}
%\textbf{Step 11. Base point defect.}
We finally estimate $\Psi_c(\tau)$ at $\tau=0.$ For that, we note
\begin{equation}\label{eqnpsidecomposition}
\Psi_c(0)
=
\int_{S_{c,R_c}} v(y)\,\dd y
+
\sum_{j=1}^M \int_{W_{j,c}(ca_j)} v(y)\,\dd y.
\end{equation}
To estimate the last integrals in \eqref{eqnpsidecomposition}, we note from \eqref{eqnvolcjc} that the volume of $W_{j,c}(ca_j)$ is $ca_j$. Consequently,
\begin{equation}\label{eqnintatzero}
\left\|\int_{W_{j,c}(ca_j)} v(y)\,\dd y-ca_j\,v(p_j)\right\|\leq \Delta_{j, c}:=\int_{W_{j,c}(ca_j)} \|v(y)-v(p_j)\|\,\dd y
%+E_{j,c},
\end{equation}
By \eqref{eqnmvt} in Lemma \ref{lmheal}, we have for $y \in W_{j,c}(ca_j),$
$$|v_{\alpha}(y)-v_{\alpha}(p_j)|\leq  \kappa_2 \|y-p_j\| \leq \hat{\lambda}_c.$$
Since $v(y)$ is an $N-$vector whose components are $v_{\alpha}(y)$, we get %$\|v(y)-v(p_j)\|\le \lambda_c.$ Consequently, where
\[
\|v(y)-v(p_j)\|\le \lambda_c,~\text{and thus}~
\Delta_{j,c}
\le \,ca_j \lambda_c.
\]
We next recall $A$ can be expressed as in \eqref{eqnaincone}.
%$$A=\sum_{j=1}^M a_j\,v(p_j).$$
Then we sum up the inequalities in \eqref{eqnintatzero}  over $1 \le j \leq M,$ and use the triangle inequality in $\R^N$ to obtain
\[
\left\| \sum_{j=1}^M \int_{W_{j,c}(ca_j)} v(y)\,\dd y-cA \right\|
\leq \sum_{j=1}^M \Delta_{j,c} \le  c \lambda_c M \sum_{j=1}^M a_j.
\]
%Also
%\[
%\left\|\int_{S_{c,R(c)}} v(x)\,\dd x\right\|=o(c).
%\]
Combining this with \eqref{eqnintscrcpart} and \eqref{eqnpsidecomposition}, we get
\[
\left\|\Psi_c(0)-cA\right\| \leq \mu_c:= 4c^2+c \lambda_c M \sum_{j=1}^M a_j.
\]
Since $\lambda_c=o(c)$ as $c \to 0^+$, the same is true for $\frac{\mu_c}{c}$. On the other hand, $\lim_{c \to 0^+}\frac{r_c}{c}=\frac{1}{2} \min_{1\le j\le M} a_j>0$ (which depends only on $m$ and $n$). By further shrinking $c_0$ to a smaller number, we may assume
$$\|L^{-1}\|_{\mathrm{op}}~\mu_c \leq \frac{r_c}{2},~~~~\forall 0<c< c_0.$$
Note that this new constant $c_0$ can still be chosen to depend only on $m$ and $n$, in view of the expressions for $r_c$ and $\mu_c$.
As a result of the last two inequalities, for every $0<c< c_0,$
\begin{equation}\label{eqncondition3psi}
\left\|L^{-1}\left(\Psi_c(0)-cA\right)\right\| \le \frac{r_c}{2}.
\end{equation}
From \eqref{eqncondition12psi} and \eqref{eqncondition3psi}, we see  both assumptions of Proposition \ref{prop:brouwer} are satisfied for the map $G=\Psi_c$ with $r=r_c$ and $A^0=cA,$ the model linear map $L$ being given by \eqref{eqndefl}, provided that $0<c< c_0$. Then by the conclusion of  Proposition \ref{prop:brouwer}, for each $0< c<c_0$,
%the model linear map $L$,the target value $cA$, and the radius $r_c$. Hence
there exists some $\tau_c \in \Ol{\mathcal{B}_N(r_c)},$
%\[
%e_{c,R(c)}^*\in E,\qquad |e_{c,R(c)}^*|<r_c,
%\]
such that
\[
\Psi_c(\tau_c)=cA. 
\]
Equivalently, $\Phi_c(T^0_c+\tau_c^\sharp)=cA.$ That is, for  $0<c< c_0,$
set the domain
\[
D^*_{c}:=D_{c}(T^0_c+\tau_c^\sharp),
\]
where $D_{c}(T)$ is the domain as defined in \eqref{eqndefdc}. Then
%we have $D^*_{c}$ is connected and open, and
\begin{equation}\label{eqnintdstar1}
\int_{D^*_{c}} e^{2\inner{\alpha+\mathbf{1}}{y}}\,\dd y=cA_\alpha,
\qquad \text{for every }\alpha\in \I.
\end{equation}
Furthermore, %for every $\alpha$ with $|\alpha|\ge m+1$, the integral over $D^*_{c}$ is infinite 
by the construction of $D_c(T)$ and $D^*_{c}$, and \eqref{eqnintscrcpartm1},

\begin{equation}\label{eqnintdstar2}
	\int_{D^*_{c}} e^{2\inner{\alpha+\mathbf{1}}{y}}\,\dd y=+\infty
	\qquad \text{for every }\alpha \in \N^n~\text{with}~|\alpha|\ge m+1.
\end{equation}

\begin{remark}
In the above argument, although we have shrunk the constant $c_0$ at several steps, at each step we can shrink $c_0$ to a smaller number that depends only on $m$ and $n$. Therefore, we may write the final constant as 
$$c_0:=c_{m,n},$$ 
which depends only on $m$ and $n$. We briefly summarize what we have proved so far: there exists a constant $c_{m,n}$ such that, for every $0<c<c_{m,n}$, there exists a domain $D_c^*$ satisfying \eqref{eqnintdstar1} and \eqref{eqnintdstar2}.
\end{remark}

%\end{proof}

\subsection{Lift to a Reinhardt domain and proof of Theorem \ref{thm1}}\label{sec34}

The logarithmic lift $\mathcal{L}(D^*_{c})$ of the set $D^*_{c}$ finally gives the desired  Reinhardt domain. More precisely, we have %As before, for any subset $D \subseteq \R^n, $ we define the logarithmic lift of $D$ be the following set in $\C^n:$
%$$\mathcal{L}(D):= \{z\in \C^n:\ (\log|z_1|,\cdots,\log|z_n|)\in E\}.$$

\begin{theorem}\label{thm:final-domain}
Let $c_{m,n}$ and  $D^*_{c},~0<c< c_{m, n},$ be as above. For  each $0< c< c_{m, n}$, define
\[
\Omega^*_c:=\mathcal{L}(D^*_{c})= \{z=(z_1, \cdots,z_n) \in \C^n:\ (\log|z_1|,\cdots,\log|z_n|)\in D^*_{c}\}.
\]
Then $\Omega^*_c$ is a connected Reinhardt domain such that
\begin{enumerate}
	\item[(1)]The Bergman space $A^2(\Omega^*_c)$ of $\Omega^*_c$ is spanned by $\{z^{\alpha}\}_{\alpha\in \I}.$ In particular, the  Bergman space separates points. %in $\Omega_c$.

\item[(2)]The Bergman kernel of $\Omega^*_c$ is given by 
\[
K_{\Omega^*_c}(z,z)=\frac{1}{c}(1+\|z\|^2)^m.
\]
\end{enumerate}
\end{theorem}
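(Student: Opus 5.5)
The plan is to exploit the Reinhardt structure: on $\Omega^*_c$ every $f\in A^2(\Omega^*_c)$ has a Laurent expansion $f=\sum_{\alpha\in\Z^n} b_\alpha z^\alpha$, converging locally uniformly since $\Omega^*_c$ avoids the coordinate hyperplanes. The monomials are mutually orthogonal in $L^2(\Omega^*_c)$ by integration in the angular variables. Moreover $\|f\|^2=\sum_\alpha |b_\alpha|^2\|z^\alpha\|^2$; to justify this, integrate over the torus fibers and apply Parseval on each fiber, then Tonelli over the logarithmic base. Consequently $b_\alpha\neq 0$ forces $\|z^\alpha\|_{L^2(\Omega^*_c)}<\infty$. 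Connectedness of $\Omega^*_c$ follows because $D^*_c$ is connected and the torus fibers are connected.

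The key step is to decide exactly which monomials are square integrable. By Proposition~\ref{prop:intloglift}, $\|z^\alpha\|^2=(2\pi)^n\int_{D^*_c}e^{2\langle\alpha+\mathbf 1,x\rangle}dx$.
\begin{itemize}
\item For $\alpha\in\I$, this equals $(2\pi)^n cA_\alpha=c\,\alpha!(m-|\alpha|)!/m!$ by \eqref{eqnintdstar1}.
\item For $\alpha\in\N^n$ with $|\alpha|\ge m+1$, it is infinite by \eqref{eqnintdstar2}.
\item For $\alpha\in\Z^n\setminus\N^n$, I use the second tail $\widetilde T_c\subseteq D^*_c$, following Remark~\ref{rmktildetc}. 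Say $\alpha_1\le -1$. After the substitution $y_j=e^{2x_j}$, the integral over $\widetilde T_c$ becomes $2^{-n}\int_{G_c}y^\alpha\,dy$. Restrict to the region where $y_2,\dots,y_n$ range over a small box on which $\mu cs_0<y_2+\cdots+y_n<cs_0$. There the admissible $y_1$ fill an interval $(0,\epsilon)$, and $y_1^{\alpha_1}$ with $\alpha_1\le-1$ is not integrable near $0$, while the remaining factors are bounded below by positive constants. Hence $\|z^\alpha\|=\infty$.
\end{itemize}

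I expect this negative-index case to be the main obstacle, since it was only sketched earlier. Doing it on the logarithmic lift directly amounts to the same computation: $\mathcal L(\widetilde T_c)$ contains a set of the form $\{0<|z_1|<\epsilon'\}\times V$, minus the hyperplane, and $|z_1|^{2\alpha_1}$ is not integrable near $z_1=0$ in $\C$.

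It follows that $A^2(\Omega^*_c)$ is exactly the span of $\{z^\alpha\}_{\alpha\in\I}$, which gives (1). Point separation then follows from the presence of $1,z_1,\dots,z_n$: given two distinct points, the function $z_j-z_j(\xi_1)$ (for a coordinate $j$ where they differ) vanishes at one point and not the other. For (2), the functions $z^\alpha/\|z^\alpha\|$, $\alpha\in\I$, form an orthonormal basis. Therefore
\[
K(z,z)=\sum_{\alpha\in\I}\frac{|z^\alpha|^2\, m!}{c\,\alpha!(m-|\alpha|)!}=\frac1c\sum_{|\alpha|\le m}\frac{m!}{\alpha!(m-|\alpha|)!}|z_1|^{2\alpha_1}\cdots|z_n|^{2\alpha_n}=\frac1c(1+\|z\|^2)^m,
\]
where the last equality is the multinomial theorem applied to $(1+|z_1|^2+\cdots+|z_n|^2)^m$.
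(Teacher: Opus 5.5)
Your proof is correct. It differs from the paper's mainly in how you eliminate the Laurent coefficients with $\alpha\in\Z^n\setminus\N^n$.

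The paper's Lemma~\ref{lmfexpansion} argues function-theoretically. $\mathcal{L}(\widetilde T_c)$ is the shell $\{\mu c s_0<\|z\|^2<cs_0\}$ with the coordinate hyperplanes removed. So an $L^2$ holomorphic $f$ extends across the hyperplanes by the removable singularity theorem, and then to the ball $\{\|z\|^2<cs_0\}$ by Hartogs. Hence its Laurent series has no negative indices. Only afterwards does the paper use the orthogonality inequality on an exhaustion $\Omega^*_{c,k}$ to kill the indices with $|\alpha|\ge m+1$.

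You instead prove the global identity $\|f\|^2=\sum_\alpha|b_\alpha|^2\|z^\alpha\|^2$ in $[0,\infty]$, by fiberwise Parseval plus Tonelli. You then show directly that $\|z^\alpha\|=\infty$ for every $\alpha\notin\I$, including the negative indices, via the computation on $G_c$ that the paper only sketches in Remark~\ref{rmktildetc}. Your computation is right. The box argument is exactly where $n\ge 2$ enters: for $n=1$, $G_c=(\mu cs_0,cs_0)$ stays away from $y_1=0$, so negative powers would be integrable. In the paper, $n\ge 2$ enters through Hartogs instead.

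Your route is more elementary and handles all excluded indices by one mechanism, with no exhaustion step. The paper's route avoids the explicit non-integrability estimate. It also yields the stronger structural fact that every $f\in A^2(\Omega^*_c)$ extends holomorphically to a full ball around the origin.
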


\begin{proof}
Let $0< c< c_{m,n}$. By the construction of $\Omega^*_c$, since $D^*_{c}$ is connected and open, it follows that $\Omega^*_c$ is a connected Reinhardt domain. By a classical expansion theorem (see, for example, \cite[Theorem 1.5, p.~46]{Ra}), every holomorphic function $f$ on $\Omega^*_c$ admits a unique Laurent series expansion which converges uniformly on compact subsets:
\begin{equation}\label{eqnfexpansion1}
	f=\sum_{\alpha \in \mathbb{Z}^n} a_{\alpha} z^{\alpha} \quad \text{on } \Omega^*_c, \quad~\text{with}~~a_{\alpha} \in \C.
\end{equation}
%where the infinite sum on the right hand side in \eqref{eqnfexpansion1} converges uniformly on every compact subset of $\om^*_c.$ Moreover, the coefficients $a_{\alpha} \in \mathbb{C}$ are uniquely determined by $f.$ 
We next prove

\begin{lemma}\label{lmfexpansion}
Let $f \in A^2(\Omega^*_c)$ with Laurent series expansion as in \eqref{eqnfexpansion1}. Then $a_{\alpha}=0$ for all $\alpha \in \Z^n \setminus \N^n.$
\end{lemma}

\begin{proof}
	As indicated in Remark~\ref{rmktildetc}, we make use of the second tail domain $\widetilde{T}_{c}$ constructed earlier in \eqref{eqndeftildetc}.
	Set $\mathcal{P}_{c}:= \mathcal{L}(\widetilde{T}_c) \subseteq \Omega^*_c$ and write
	$$\mathcal{A}_c=\mathcal{A}_c(\mu):=\{z=(z_1, \cdots,z_n) \in \C^n:  \mu c s_0
	< \sum_{i=1}^n|z_i|^2 <c s_0\},\quad \text{and} \quad \mathcal{V}_c:=\{z \in \mathcal{A}_c: \prod_{j=1}^n z_j=0 \}.$$
	By the definitions of $\mathcal{L}$ and $\widetilde{T}_c$, we have
	\begin{equation}\label{eqnacvp}
		\mathcal{P}_{c}=\mathcal{A}_c \setminus \mathcal{V}_c.
	\end{equation}
	Since $\widetilde{T}_c \subseteq D^*_{c}$, we have $\mathcal{P}_{c} \subseteq \Omega^*_c$, and hence $f \in A^2(\mathcal{P}_{c})$. By \eqref{eqnacvp} and the Riemann removable singularity theorem, it follows that $f \in A^2(\mathcal{A}_c)$.
	By Hartogs' extension theorem, $f$ extends holomorphically to the ball $\mathcal B_c:=\{z \in \C^n: \sum_{i=1}^n|z_i|^2 <c s_0\}$, and thus admits a power series expansion on $\mathcal B_c$:
	$$f=\sum_{\alpha \in \mathbb{N}^n} c_{\alpha} z^{\alpha}, \quad c_{\alpha} \in \C.$$
	Restricting this expansion to $\mathcal{P}_c$, we obtain a power series expansion of $f$ on $\mathcal{P}_c$.
	On the other hand, restricting \eqref{eqnfexpansion1} to $\mathcal{P}_c \subseteq \Omega^*_c$ yields a Laurent series expansion of $f$ on $\mathcal{P}_c$.
	By the uniqueness of the Laurent expansion on the Reinhardt domain $\mathcal{P}_c$, we conclude that $a_{\alpha}=0$ for all $\alpha \in \Z^n \setminus \N^n$.
\end{proof}

Next for any multi-index $\alpha \in \N^n$, by Proposition \ref{prop:intloglift},

\[
\int_{\Omega^*_c}|z^\alpha|^2\,\dd V_{2n}(z)
=
(2\pi)^n
\int_{D^*_{c}}  e^{2\langle \alpha+\mathbf{1}, x \rangle}\,\dd x.
\]
%By Theorem~\ref{thm:geometric-realization}, 
Then %for $|\alpha|\le m$, i.e., $\alpha \in \I$ 
by the above equation, \eqref{eqnintdstar1} and the property of Reinhardt domains, we have
\begin{equation}\label{eqnintzalm}
	\int_{\Omega^*_c} z^\alpha \overline{z}^{\beta}\,\dd V_{2n}(z)
	=
	(2\pi)^n \delta_{\alpha\beta}\,(cA_\alpha), \quad \forall\, \alpha,\beta \in \I,
\end{equation}
where $\delta_{\alpha\beta}$ denotes the Kronecker delta.
%this integral equals $(2\pi)^n(cA)_\alpha$, hence is finite.
%For $|\alpha|\ge m+1$, the corresponding logarithmic integral is infinite, because the tail is included in $D_{c,R(c)}$.
Likewise, using \eqref{eqnintdstar2}, we get 
\begin{equation}\label{eqnintzabmp1}
\int_{\Omega^*_c}|z^\alpha|^2\,\dd V_{2n}(z)
= +\infty
\qquad ~\text{if}~|\alpha|\ge m+1.
\end{equation}
Let $f \in A^2(\Omega^*_c)$. By Lemma \ref{lmfexpansion},  $f$ admits a power series expansion which converges uniformly on compact subsets:
\begin{equation}\label{eqnfexpansion2}
f=\sum_{\alpha \in \mathbb{N}^n} a_{\alpha} z^{\alpha} \quad \text{on } \Omega^*_c.
\end{equation}
Let $\{D^*_{c,k}\}_{k=1}^{\infty}$ be an exhausting sequence of relatively compact subdomains of $D^*_c$. For each $k \geq 1$, write $\Omega^*_{c,k}:=\mathcal{L}(D^*_{c,k})$, which is clearly a Reinhardt domain. Moreover, $\{\Omega^*_{c,k}\}_{k=1}^{\infty}$ is an exhausting sequence of relatively compact subdomains of $\Omega^*_c$. By the uniform convergence of \eqref{eqnfexpansion2} on $ \om^*_{c, k}$ and the property of Reinhardt domains, we have
$$\int_{\Omega^*_{c,k}} |f|^2 \,\dd V_{2n}(z) \geq  |a_{\alpha}|^2 \int_{\Omega^*_{c,k}}|z^\alpha|^2\,\dd V_{2n}(z).$$
By \eqref{eqnintzabmp1} and checking the limit of both sides in the above equation as $k \to \infty$, we conclude $a_{\alpha}=0$ for all $\alpha \in \mathbb{N}^n$ with $|\alpha|\ge m+1$. Using this and \eqref{eqnintzalm}, one can easily verify that the following gives an orthonormal basis in $A^2(\Omega^*_c):$
\begin{equation}\label{eqnonbomegac}
\{(2\pi)^{\frac{-n}{2}}(cA_\alpha)^{-\frac{1}{2}} z^\alpha \}_{\alpha \in \I}.
\end{equation} 
Consequently, the  Bergman space separates points as it contains all constant and linear functions. This proves part (1) of Theorem \ref{thm:final-domain}. Furthermore, the Bergman kernel of $\Omega^*_c$ can be computed as

%$$\frac{1}{2^n(m+n)!}\,\alpha!\,(m-|\alpha|)! $$
\begin{equation}\label{eqnbergmankernelom}
\begin{split}
K_{\Omega^*_c}&=\frac{1}{c}\sum_{\alpha \in \I}  \frac{|z^\alpha|^2}{(2\pi)^n A_\alpha} \\
&=\frac{1}{c}\sum_{\alpha \in \I} \frac{m!}{\alpha!\,(m-|\alpha|)!} |z^\alpha|^2 \\
&=\frac{1}{c} (1+\|z\|^2)^m
\end{split}
\end{equation}
This establishes part (2) of Theorem~\ref{thm:final-domain}, and thus completes the proof.
\end{proof}

\begin{comment}
Since $\Omega$ is Reinhardt, holomorphic functions on $\Omega$ admit monomial expansions adapted to the Reinhardt
symmetry, and distinct monomials are orthogonal in $A^2(\Omega)$. The divergence of the norms for all monomials
of degree $\ge m+1$ forces every $L^2$ holomorphic function on $\Omega$ to lie in
\[
\operatorname{span}\{z^\alpha:\ |\alpha|\le m\}=P_m.
\]
Hence
\[
A^2(\Omega)=P_m.
\]

The Bergman kernel of the finite-dimensional Hilbert space $P_m$ is, up to a positive constant, the standard reproducing kernel
\[
(1+z\cdot \overline w)^m.
\]
Hence
\[
K_\Omega(z,w)=C(1+z\cdot \overline w)^m
\]
for some $C>0$.
\end{comment}

Theorem \ref{thm1} is then a consequence of Theorem~\ref{thm:final-domain}.

\smallskip

{\bf Proof of Theorem \ref{thm1}.} Set
$C_{m,n}:=\frac{1}{c_{m,n}},~C:=\frac{1}{c}$ and $\Omega_C:=\Omega^*_c.$
Then parts~(1) and~(2) of Theorem~\ref{thm1} follow from the corresponding parts of Theorem~\ref{thm:final-domain}. \qed

\smallskip

We conclude this section with two remarks, which will be used in the proof of Theorem~\ref{thm2}.

\begin{remark}\label{rmkfamilyomegac}
Fix $0< c< c_{m,n}$. Recall that in Step~3 of \S\ref{sec31}, when we construct $\widetilde{T}_{c}=\widetilde{T}_{c}(\mu)$, the parameter $\mu$ can be any number in $(0,1)$. Therefore, by varying $\mu$, we obtain a family $\{\widetilde{T}_{c}(\mu)\}_{\mu \in (0,1)}$ of possible choices. This gives rise to a family of domains  $\{D^*_{c}(\mu)\}_{\mu \in (0,1)}$ and thus their logarithmic lift $\{\Omega^*_c(\mu)\}_{\mu \in (0,1)}$. By our construction and theorem~\ref{thm:final-domain}, every $\Omega^*_c(\mu), \mu \in (0,1),$ satisfies the conclusion in Theorem \ref{thm:final-domain}. Moreover, we will show they are Bergman inequivalent for different choices of $\mu$.  To prove that, a key feature of $\Omega^*_c(\mu)$ that we will use is the following consequence of Remark~\ref{rmkminsxdc}:
\begin{equation}\label{eqninfnormomega}
\inf_{z \in \Omega^*_c(\mu)} \|z\|^2=\inf_{x \in D^*_{c}(\mu)} S(x)=\mu cs_0,
\end{equation}
where $S(x)$ is as defined in \eqref{eqndefs}. 
%To see the above equation hold, we note by the definition of $s_0$ and remarks ???, we have $S(x) \geq c s_0$ for every $x \in D^*_{c}(\mu) \setminus \widetilde{T}_{c}(\mu).$ 
\end{remark}

\begin{remark}\label{rmkintersectdc}
By Remark \ref{rmk1taildomain}, $T_R \cap T_{R'} \neq \emptyset$ for any $R, R' > 1$. Recall $T_{R_c} \subseteq D^*_{c}(\mu).$ We thus have
$$D^*_{c}(\mu) \cap D^*_{c'}(\mu') \neq \emptyset,~~~~\text{for any}~0< c, c'< c_{m,n}~\text{and}~\mu, \mu' \in (0,1).$$
Consequently, $\Omega^*_c(\mu) \cap \Omega^*_{c'}(\mu') \neq \emptyset$ for any $0< c, c'< c_{m,n}~\text{and}~\mu, \mu' \in (0,1).$
Furthermore, by the definition of $\widetilde{T}_c(\mu)$ in \eqref{eqndeftildetc}, for any $0<\mu'<\mu<1$, the set $\widetilde{T}_c(\mu')\setminus \widetilde{T}_c(\mu)$, and hence also the symmetric difference $\widetilde{T}_c(\mu')\Delta \widetilde{T}_c(\mu)$, both contain the open set
\[
\left\{x\in\mathbb{R}^n:\mu'cs_0<S(x)<\mu cs_0\right\}.
\]
By our construction and Remark~\ref{rmkminsxdc}, we conclude that  $D^*_{c}(\mu') \Delta D^*_{c}(\mu)$, and hence $\Omega^*_c(\mu') \Delta \Omega^*_{c}(\mu)$, have nonempty interior for any $0< c < c_{m,n}$ and $0< \mu'< \mu < 1$.
\end{remark}

\smallskip

\begin{comment}
	
\begin{remark}\label{rmk1taildomain}
	Since for each $j$ we have $t - x_j \le |x_j - t| < e^{-\gamma t}$, it follows that $x_j > t - e^{-\gamma t} > R - 1$. Therefore, $T_R \subseteq (0,\infty)^n$. Moreover, for every $R > 1$,
	\[
	\{(x_1,\cdots,x_n) : x_1 = \cdots = x_n \in (R,\infty)\} \subseteq T_R.
	\]
	Consequently, for any $R, R' > 1$, we have $T_R \cap T_{R'} \neq \emptyset$. These facts will be used to prove part~(2) of Theorem~\ref{thm2}.
\end{remark}

\section{Conclusion}

Combining Proposition~\ref{prop:A-in-interior}, Corollary~\ref{cor:discrete-A},
Theorem~\ref{thm:geometric-realization}, and Theorem~\ref{thm:final-domain}, we obtain:

\begin{theorem}
For every integer $m\ge 0$ and every dimension $n\ge 2$, there exists a connected Reinhardt domain
\[
\Omega\subset \C^n
\]
such that its Bergman kernel is
\[
K_\Omega(z,w)=C(1+z\cdot \overline w)^m
\]
for some constant $C>0$.
\end{theorem}

\begin{remark}
The proof above is written in detailed draft form. The most delicate part is the geometric realization theorem, whose
role is to realize the low-degree moment vector after the tail and the bridge have already been incorporated into the
fixed geometry. This ordering is logically preferable to attaching the tail only at the end, because it prevents the
tail from introducing an uncontrolled change in the low-degree moments after the local correction has already been made.
\end{remark}

\end{comment}

\section{Proof of some remarks and Theorem \ref{thm2} and \ref{thm3}}\label{sec4}

\subsection{Proof of the statement in Remark \ref{rmkngreaterthan1}, \ref{rmkbergmanneg1}, \ref{rmkbergmanneg2} and \ref{rmkbergmanequivalence}}\label{sec41}

We start with the proof of Remark \ref{rmkngreaterthan1}.

\smallskip

{\bf Proof of the assertion in Remark \ref{rmkngreaterthan1}.} 
We prove the first assertion by contradiction. Suppose that there exists a domain $\Omega \subseteq \C$ whose Bergman metric $\omega$ has constant positive holomorphic sectional curvature. Then $\omega$ is locally isometric to $\lambda \omega_{FS}$ on $\mathbb{P}^1$ for some constant $\lambda>0$. By the discussion following Theorem~0, $\lambda$ must be a positive integer $m$. Hence there exist small domains $U \subseteq \C \subseteq \mathbb{P}^1$ and $V \subseteq \Omega$, together with a biholomorphic map $f:U\to V$ such that $f^*\omega=m\omega_{FS}$.
Let $K$ denote the Bergman kernel of $\Omega$. It follows that $\log(K\circ f)-m\log(1+|z|^2)$ is harmonic on $U$. On the other hand, since $A^2(\Omega)$ is nontrivial, Wiegerinck's theorem for planar domains \cite[Section~3]{Wi} implies that $A^2(\Omega)$ is infinite-dimensional. Let $\{\phi_j\}_{j=0}^\infty$ be an orthonormal basis of $A^2(\Omega)$.
Using the harmonicity of $\log(K\circ f)-m\log(1+|z|^2)$ and shrinking $U, V$ if necessary, a standard argument yields
$$\sum_{j=0}^{\infty} |h(\phi_j\circ f)|^2=(1+|z|^2)^m \quad \text{on } U,$$
where $h$ is a nowhere-vanishing holomorphic function on $U$. Applying $\frac{\partial^{2k}}{\partial z^k \partial \bar z^k}$ to both sides for any $k>m$, we see each $h(\phi_j\circ f)$ is a holomorphic polynomial of degree at most $m$. It follows that the family $\{h(\phi_j\circ f)\}_{j=0}^\infty$ is linearly dependent, and therefore so is $\{\phi_j\}_{j=0}^\infty$, a contradiction. This proves the first assertion.

To prove the second assertion, again arguing by contradiction, suppose that there exists a Bergman nondegenerate Riemann surface $M$ whose Bergman metric has constant positive holomorphic sectional curvature. Then by Huang--Li \cite[Theorem~1.1]{HLT}, $M$ is biholomorphic to a domain $D \subseteq \mathbb{P}^1$. Set $D_0:=D\setminus\{\infty\}\subseteq \C$. By the Riemann removable singularity theorem,  the Bergman kernel of $D_0$ is the restriction of that of $D$ to $D_0$. Consequently, the Bergman metric of $D_0$ also has constant positive holomorphic sectional curvature. This contradicts the first assertion. \qed

\smallskip

{\bf Proof of the assertion in Remark \ref{rmkbergmanneg1}.} 
Since $E$ is closed, $M \setminus E$ is open.

	\noindent
	To prove connectedness of $M \setminus E$,
	we suppose $M \setminus E = U_1 \cup U_2$ with $U_1, U_2$ nonempty disjoint open sets. 
	Pick $0 \neq \eta \in \A(M)$ and define
	\[
	\omega =
	\begin{cases}
		\eta & \text{on } U_1,\\
		0 & \text{on } U_2.
	\end{cases}
	\]
	Then $\omega \in \A(M \setminus E)$. By negligibility, $\omega$ extends to $\widetilde{\omega} \in \A(M)$. 
	Thus $\widetilde{\omega} = \eta$ on $U_1$ and $\widetilde{\omega} = 0$ on $U_2$, which contradicts the identity theorem and the connectivity of $M$. Hence $M \setminus E$ is connected.
To prove density of $M \setminus E$, we
	suppose $E$ contains a nonempty open set $V$. For any $\eta \in \A(M)$,
	\[
	\|\eta\|_{M}^2 = \|\eta\|_{M \setminus E}^2,
	\]
	so $\|\eta\|_{E}^2 = 0$, hence $\eta \equiv 0$ on $V$. By the identity theorem, $\eta \equiv 0$ on $M$, contradicting $\A(M) \neq \{0\}$. Thus $E$ has empty interior, and thus $M \setminus E$ is dense. \qed

\smallskip

{\bf Proof of the assertion in Remark \ref{rmkbergmanneg2}.} 
We first give a more precise formulation of Remark \ref{rmkbergmanneg2}, and then prove it. In the special case where $M$ is a bounded domain in $\C^n$, the result was established in \cite{ETX1}, but the argument there does not extend directly to the setting of general complex manifolds. We therefore provide a proof in full generality.

\begin{proposition}\label{prop:rmk15}
Let $M$ be an $n$-dimensional complex manifold with nontrivial Bergman space, and let $M'$ be a domain in $M$. Assume that the Bergman kernel of $M'$ is the restriction of that of $M$ to $M'$. Then the following restriction map is a unitary isomorphism:
	\[
	\Res : A^2_{(n,0)}(M) \longrightarrow A^2_{(n,0)}(M'), \qquad \omega \longmapsto \omega|_{M'}.
	\]
By definition, $M \setminus M'$ is a Bergman negligible subset of $M$.
\end{proposition}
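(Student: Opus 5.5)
We argue directly in the Hilbert spaces, using only the reproducing property. Write $H:=\A(M)$ and $H':=\A(M')$, with kernels $K_M$ and $K_{M'}$. Restriction $\Res:H\to H'$ is well defined and norm-nonincreasing, because $M'\subseteq M$ and the integrand $(\sqrt{-1})^{n^2}\omega\wedge\Ol{\omega}$ is a nonnegative $(n,n)$-form. It is injective by the identity theorem, since $M$ is connected and $M'$ is a nonempty open set. Thus it suffices to show that $\Res$ is isometric and surjective.

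\textbf{Step 1: reproducing elements.} For $w\in M'$ and a local coordinate chart $(W,w_1,\dots,w_n)$ around $w$, the representing function $\widetilde K_M(\cdot,w)$ of \eqref{eqnbergmanlocal} determines an element $k^M_w\in H$: in terms of an orthonormal basis, $k^M_w=\sum_k \Ol{\tilde\phi_k(w)}\,\phi_k$, where $\phi_k=\tilde\phi_k\,dz$ locally. It satisfies $\inner{\phi}{k^M_w}=\tilde\phi(w)$ and $\|k^M_w\|^2=\widetilde K_M(w,w)$. Define $k^{M'}_w\in H'$ in the same way. The hypothesis $K_{M'}=K_M|_{M'}$, read on the diagonal and then polarized (both sides are sesqui-holomorphic on $M'\times M'$, and a sesqui-holomorphic function is determined by its diagonal values), gives $\widetilde K_{M'}(z,w)=\widetilde K_M(z,w)$ for $z,w\in M'$. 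Hence $\Res(k^M_w)=k^{M'}_w$ as forms on $M'$.

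\textbf{Step 2: isometry on a dense set.} For $w,w'\in M'$,
\[
\inner{k^{M'}_w}{k^{M'}_{w'}}_{H'}=\widetilde K_{M'}(w',w)=\widetilde K_M(w',w)=\inner{k^M_w}{k^M_{w'}}_H .
\]
So $\Res$ preserves inner products on $V:=\operatorname{span}\{k^M_w:w\in M'\}$. The space $V$ is dense in $H$: if $\phi\perp V$, then $\tilde\phi(w)=0$ for all $w\in M'$, so $\phi\equiv 0$ by the identity theorem. Since $\Res$ is bounded, it is an isometry on all of $H$.

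\textbf{Step 3: surjectivity.} Because $\Res$ is an isometry, its image is closed. If $\psi\in H'$ is orthogonal to the image, then $\tilde\psi(w)=\inner{\psi}{k^{M'}_w}=\inner{\psi}{\Res k^M_w}=0$ for every $w\in M'$, so $\psi=0$. Hence $\Res$ is unitary, and by Definition~\ref{defnbergmannegligible}, $M\setminus M'$ is Bergman negligible.

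\textbf{Main obstacle.} The delicate point is Step 1 in the manifold setting. The representing functions depend on the chart, and one must check that $k^M_w$ is a genuine element of $H$: the series $\sum_k\Ol{\tilde\phi_k(w)}\phi_k$ converges in $H$ because $\sum_k|\tilde\phi_k(w)|^2=\widetilde K_M(w,w)<\infty$. One must also make sure that the reproducing identity and the polarization step are carried out in one fixed chart around $w$ and $w'$. The same chart is used for both $M$ and $M'$, and since the kernel forms, not just the local functions, agree, the chart dependence cancels. No boundedness of $M$ and no Bergman nondegeneracy are needed beyond $H\neq\{0\}$. Even when $\widetilde K_M(w,w)=0$ at some points, the argument survives, because Step 2 only uses the vectors $k^M_w$, which are then simply zero.
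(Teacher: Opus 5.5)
Your proof is correct and follows essentially the same route as the paper: both take the span of the reproducing elements $k^M_w$, $w\in M'$, use the kernel identity to make restriction isometric on this span, use the identity theorem to show the span is dense, and obtain surjectivity because anything orthogonal to all $k^{M'}_w$ vanishes. The differences are cosmetic. You work with $\Res$ directly, whereas the paper first builds an abstract unitary $\mathcal U$ between the two kernel spans and then identifies it with $\Res$. You also make explicit the polarization step from the diagonal identity to the off-diagonal one, which the paper uses tacitly.
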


\begin{proof}
Set $\mathcal H:=A^2_{(n,0)}(M)$ and $\mathcal H':=A^2_{(n,0)}(M')$, and let $K_M$ and $K_{M'}$ denote the Bergman kernels. By assumption, we have
\begin{equation}\label{eqnkmequalkmp}
	K_M|_{M'}=K_{M'}.
\end{equation}
For each $w^* \in M'$, fix a local holomorphic coordinate chart $w=(w_1,\cdots,w_n)$ near $w^*$, and write $d\overline w= d\overline w_1 \wedge \cdots \wedge d\overline w_n.$ Then, by the properties of the Bergman kernel, there exist two unique elements
	$\hat{K}_M(\cdot, w^*)\in \mathcal H$ and $\hat{K}_{M'}(\cdot, w^*)\in \mathcal H'$ such that
	\[
	K_M(\cdot, w^*)
	= \hat{K}_M(\cdot, w^*)\, d\overline w\big|_{w^*}, \quad 
	K_{M'}(\cdot, w^*)
	= \hat{K}_{M'}(\cdot, w^*)\, d\overline w\big|_{w^*}.
	\]
By the assumption on the two Bergman kernels, 
\begin{equation}\label{eqnbergmanequal}
\hat{K}_{M'}(\cdot, w^*)=\hat{K}_M(\cdot, w^*)|_{M'},~\text{for~all}~w^*\in M'.
\end{equation}
Set
	\[
	\mathcal D:=\mathrm{Span}\{\hat{K}_M(\cdot, w^*): w^*\in M'\}\subseteq \mathcal H,
	\quad
	\mathcal D':=\mathrm{Span}\{\hat{K}_{M'}(\cdot, w^*): w^*\in M'\}\subseteq \mathcal H'.
	\]
We define a linear operator $\mathcal U:\mathcal D \to \mathcal D'$ as follows. For
\begin{equation}\label{eqnomexpression}
	\phi=\sum_{j=1}^s c_j \hat{K}_M(\cdot, w_j^*) \in \mathcal D,~~~\text{where }  s \in \Z_+,~ w^*_j \in M' \text{ and } c_j \in \C,
\end{equation}
we set
\[
\mathcal U\phi:=\sum_{j=1}^s c_j \hat{K}_{M'}(\cdot, w_j^*) \in \mathcal D'.
\]
It follows from \eqref{eqnbergmanequal} that $\mathcal U$ is well defined. Moreover, we claim that $\mathcal U$ is surjective and isometric from $\mathcal D$ onto $\mathcal D'$. The surjectivity is clear. To verify that $\mathcal U$ is isometric, fix any $\zeta^*, \eta^* \in M'$. Let $\widetilde{K}_M$ denote the representing function of $K_M$ in local coordinates at $\zeta^*$ and $\eta^*$, as in \eqref{eqnbergmanlocal}, and similarly for $\widetilde{K}_{M'}$. By assumption \eqref{eqnkmequalkmp}, we have $\widetilde{K}_M(\zeta^*, \eta^*)=\widetilde{K}_{M'}(\zeta^*, \eta^*)$. Then, by the reproducing property of the Bergman kernel,
\[
\langle \hat{K}_M(\cdot, \eta^*),\hat{K}_M(\cdot, \zeta^*)\rangle_{\mathcal H}
= \widetilde{K}_M(\zeta^*, \eta^*)
= \widetilde{K}_{M'}(\zeta^*, \eta^*)
= \langle \hat{K}_{M'}(\cdot, \eta^*),\hat{K}_{M'}(\cdot, \zeta^*)\rangle_{\mathcal H'}.
\]
This shows that $\mathcal U$ is isometric.
%Here $\widetilde{K}_M$ denotes the representing function of $K_M$ as in \eqref{eqnbergmanlocal}.

We next claim $\mathcal D$ and $\mathcal D'$ are dense. Indeed, if $\phi \in\mathcal H$ is orthogonal to $\mathcal D$, then for all $w^*\in M',$ we have $\langle \phi,\hat{K}_{M}(\cdot, w^*)\rangle_{\mathcal H}=0$, and thus $\phi (w^*)=0.$ Then by the identity theorem,  $\phi=0$.
Similarly, if $\phi'\in\mathcal H'$ is orthogonal to $\mathcal D'$, then $\phi' \equiv 0$ on $M'$. Thus both $\mathcal D$ and $\mathcal D'$ are dense.

By the density of $\mathcal D$ and $\mathcal D'$, the operator $\mathcal U$ extends uniquely to a unitary operator from $\mathcal H$ onto $\mathcal H'$, which we continue to denote by $\mathcal U$.
On the other hand, let $\phi\in\mathcal D$ be expressed as in \eqref{eqnomexpression}. Then, for any $z\in M'$,
\[
(\mathcal U\phi)(z)=\sum_{j=1}^s c_j \hat{K}_{M'}(z, w_j^*)
=\sum_{j=1}^s c_j \hat{K}_M(z,w_j^*)
=\phi(z).
\]
Thus $\mathcal U\phi=\phi|_{M'}=\Res(\phi)$, for all $ \phi \in \mathcal D$. Since
$\|\Res(\phi)\|_{\mathcal H'}\le \|\phi\|_{\mathcal H}$ for every
$\phi\in\mathcal H,$
the restriction map $\Res:\mathcal H\to\mathcal H'$ is continuous. Therefore, by the density of $\mathcal D$ and the continuity of the two operators, we conclude that $\mathcal U=\Res$ on $\mathcal H$. Hence $\Res$ is a unitary isomorphism, and by definition, $M\setminus M'$ is Bergman negligible.
\end{proof}

{\bf Proof of the assertion in Remark \ref{rmkbergmanequivalence}.} Again we first give a more precise formulation of the remark, and then prove it. In the following, $K_{M_1}$ and $K_{M_2}$ denote the Bergman kernel of the two complex manifolds $M_1$ and $M_2$, respectively.

%\bigskip

\begin{proposition}\label{prop:rmk18}
Let $M_1$ and $M_2$ be complex manifolds with nontrivial Bergman spaces. Assume $M_1 \sim_B M_2$. Then there exists a unitary isomorphism from $A^2_{(n,0)}(M_1)$ to $A^2_{(n,0)}(M_2)$. Moreover, there exist open dense subsets $D_1 \subseteq M_1$ and $D_2 \subseteq M_2$, and a biholomorphic map $f : D_1 \to D_2$ such that
\[
K_{M_1}\big|_{D_1} = f^*\bigl(K_{M_2}\big|_{D_2}\bigr).
\]
\end{proposition}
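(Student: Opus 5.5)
Since $\simB$ is the equivalence relation generated by $\leqB$, there is a chain $M_1=X_1,\dots,X_k=M_2$ with each consecutive pair related by $\leqB$ in one direction or the other. The plan is to prove both conclusions for a single step $X\leqB Y$, show that both conclusions are symmetric, and then compose along the chain.

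\emph{The one-step case.} Suppose $X\leqB Y$, so there are a Bergman negligible $E\subseteq Y$ and a biholomorphism $g:X\to Y\setminus E$. Pullback $g^*$ is a unitary isomorphism $A^2_{(n,0)}(Y\setminus E)\to A^2_{(n,0)}(X)$. This holds because the inner product $(\sqrt{-1})^{n^2}\int f\wedge\overline h$ is intrinsic to $(n,0)$-forms and invariant under biholomorphisms. By definition of negligibility, $\Res:A^2_{(n,0)}(Y)\to A^2_{(n,0)}(Y\setminus E)$ is unitary, so $g^*\circ\Res$ is a unitary isomorphism $A^2_{(n,0)}(Y)\to A^2_{(n,0)}(X)$.

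For the kernels, take an orthonormal basis $\{\phi_k\}$ of $A^2_{(n,0)}(Y)$. Since $\Res$ is unitary, $\{\phi_k|_{Y\setminus E}\}$ is an orthonormal basis of $A^2_{(n,0)}(Y\setminus E)$. Independence of the kernel from the choice of basis then gives $K_{Y\setminus E}=K_Y|_{Y\setminus E}$. Likewise $\{g^*\phi_k\}$ is an orthonormal basis of $A^2_{(n,0)}(X)$, so $K_X=g^*K_{Y\setminus E}=g^*(K_Y|_{Y\setminus E})$, where the pullback is taken by $g\times g$ on $X\times X$. Finally, $Y\setminus E$ is open and dense in $Y$ by Remark~\ref{rmkbergmanneg1}. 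So one step yields the data $(D_1,D_2,f)=(X,\,Y\setminus E,\,g)$, and reversing the direction yields $(Y\setminus E,\,X,\,g^{-1})$.

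\emph{Composition.} Inductively, suppose we have open dense $A\subseteq X_1$, $B\subseteq X_j$ and a biholomorphism $f:A\to B$ with $K_{X_1}|_A=f^*(K_{X_j}|_B)$. Suppose the step from $X_j$ to $X_{j+1}$ gives open dense $B'\subseteq X_j$, $C\subseteq X_{j+1}$ and a biholomorphism $h:B'\to C$ with the analogous kernel identity. Set
\[
D_1:=f^{-1}(B\cap B'),\qquad D_2:=h(B\cap B'),
\]
and take $h\circ f$ as the map. The set $B\cap B'$ is open and dense in $X_j$, being the intersection of two open dense sets. Its preimage under the homeomorphism $f:A\to B$ is dense in $A$, hence dense in $X_1$; similarly $h(B\cap B')$ is dense in $X_{j+1}$. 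The kernel identities restrict to these open subsets and compose, giving $K_{X_1}|_{D_1}=(h\circ f)^*(K_{X_{j+1}}|_{D_2})$. The unitary isomorphisms of the Bergman spaces simply compose along the chain.

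\emph{Main obstacle.} The delicate point is the density bookkeeping in the composition step. Each step only provides a map between dense open subsets, not between the whole manifolds, so intersecting is needed. One must check that density is preserved under restriction to open subsets and under the homeomorphisms involved. This relies on the fact that an open dense subset of an open dense set is dense in the ambient space. A secondary check is that restricting a kernel form to $D\times D$ and pulling back are compatible operations, which is immediate from the local expression~\eqref{eqnbergmanlocal}.
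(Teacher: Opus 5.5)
Your proposal is correct and follows essentially the same route as the paper: pass to a chain, extract for each step a kernel-preserving biholomorphism between open dense subsets (density coming from Remark~\ref{rmkbergmanneg1}), and compose inductively via $f^{-1}(B\cap B')$ and $h(B\cap B')$, which is exactly the paper's induction with $E_{j+1}=f_j^{-1}(G_j\cap U_j)$ and $G_{j+1}=\phi_j(G_j\cap U_j)$. Your one-step argument that $K_{Y\setminus E}=K_Y|_{Y\setminus E}$ and $K_X=g^*K_{Y\setminus E}$, obtained by transporting an orthonormal basis, spells out a point the paper asserts without detail.
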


\begin{proof}
By the definition of Bergman equivalence, there exists a finite sequence of complex manifolds
\[
M_1=X_1,\; X_2,\;\cdots,\;X_k=M_2
\]
such that for each $1\le j\le k-1$, either $X_j\leqB X_{j+1}$ or $X_{j+1}\leqB X_j$. By Definitions~\ref{defnbergmannegligible} and \ref{defnleqb}, each such relation induces a unitary isomorphism between the Bergman spaces of $X_j$ and $X_{j+1}$. Composing these isomorphisms along the chain, we obtain a unitary isomorphism between $A^2_{(n,0)}(X_i)$ and $A^2_{(n,0)}(X_j)$ for any $1\le i,j\le k$, and in particular between $A^2_{(n,0)}(M_1)$ and $A^2_{(n,0)}(M_2)$. Since $M_1$ and $M_2$ have nontrivial Bergman spaces by assumption, it follows that every $X_j$ also has nontrivial Bergman space.
	
By Definition \ref{defnleqb}, for each $1\le j\le k-1$, there exists a biholomorphism
	$\phi_j:U_j\to V_j$,
	where $U_j\subseteq X_j$ and $V_j\subseteq X_{j+1}$ are open subsets obtained by removing Bergman negligible subsets (possibly empty sets), and such that
	\[
	K_{X_j}\big|_{U_j}=(\phi_j)^* \bigl(K_{X_{j+1}}\big|_{V_j}\bigr).
	\]
	By Remark \ref{rmkbergmanneg1}, both $U_j$ and $V_j$ are open dense subsets of $X_j$ and $X_{j+1}$, respectively.

\smallskip

\noindent
{\bf Claim.} For every $1 \le j \le k,$ there exist open dense subsets $E_j\subseteq X_1$ and $G_j\subseteq X_j$, together with a biholomorphism
$f_j:E_j\to G_j,$
such that
\[
K_{X_1}\big|_{E_j} = f_j^*\bigl(K_{X_j}\big|_{G_j}\bigr).
\]

\begin{proof}[Proof of the claim]
We argue by induction on $j$. When $j=1$, simply take $E_1=G_1=X_1$ and $f_1=\mathrm{id}_{X_1}$, and the conclusion holds trivially.
Now assume that for some $1\le j\le k-1$, there exist open dense subsets $E_j\subseteq X_1$ and $G_j\subseteq X_j$, together with a biholomorphism $f_j:E_j\to G_j$, such that
\[
K_{X_1}\big|_{E_j}=f_j^*\bigl(K_{X_j}\big|_{G_j}\bigr).
\]
We prove the statement for $j+1$. To do that, we set
	\[
	E_{j+1}:=f_j^{-1}(G_j\cap U_j), \qquad G_{j+1}:=\phi_j(G_j\cap U_j).
	\]
Since $G_j$ and $U_j$ are open dense subsets of $X_j$, their intersection $G_j\cap U_j$ is also open dense in $X_j$. Because $f_j:E_j\to G_j$ is biholomorphic, it follows that $E_{j+1}$ is open dense in $E_j$, hence open dense in $X_1$. Likewise, since $\phi_j:U_j\to V_j$ is biholomorphic and $G_j\cap U_j$ is open dense in $U_j$, the set $G_{j+1}$ is open dense in $V_j$, hence open dense in $X_{j+1}$. Next define
\[
f_{j+1}:=\phi_j\circ f_j\big|_{E_{j+1}}:E_{j+1}\to G_{j+1}.
\]
Then $f_{j+1}$ is biholomorphic. Moreover, we have
	\[
	f_{j+1}^*\bigl(K_{X_{j+1}}\big|_{G_{j+1}}\bigr)
	=
	f_j^*\!\left((\phi_j)^*\bigl(K_{X_{j+1}}\big|_{G_{j+1}}\bigr)\right) =
	f_j^*\bigl(K_{X_j}\big|_{G_j\cap U_j}\bigr)
	=
	K_{X_1}\big|_{E_{j+1}},
	\]
where in the last step we used the induction hypothesis restricted to $E_{j+1}\subseteq E_j$.
This proves the induction step, and hence the claim follows.
\end{proof}
Finally, set
	\[
	D_1:=E_k\subseteq M_1,\qquad D_2:=G_k\subseteq M_2,\qquad f:=f_k:D_1\to D_2.
	\]
Then $D_1$ and $D_2$ are open dense subsets of $M_1$ and $M_2$, respectively, $f$ is biholomorphic, and
$K_{M_1}\big|_{D_1}=f^*\bigl(K_{M_2}\big|_{D_2}\bigr).$
This proves the proposition.
\end{proof}

%%%%%%%%%%%%%%%%%%%%%%%%%%%%%%%%%%%%%%%%%%%%%
%%%%%%%%%%%%%%%%%%%%%%%%%%%%%%%%%%%%%%%%%%%%%
%%%%%%%%%%%%%%%%%%%%%%%%%%%%%%%%%%%%%%%%%%%%%

\subsection{Proof of Theorem \ref{thm2}, \ref{thm20} and \ref{thm3}}

We first give a proof of Theorem \ref{thm2} and \ref{thm3}. Before that, we prove the following proposition. See a recent related result in \cite{Ya26}; see also rigidity results of a similar flavor in complex analysis established in \cite{LMZ08, MZ21}.

\begin{proposition}\label{propfunitary}
Let $D_1,D_2\subseteq \C^n$ be domains, and assume that their Bergman kernels satisfy
\begin{equation}\label{eqnkdjformula}
K_{D_j}(z,z)=C_j\bigl(1+ \|z\|^2 \bigr)^m,\qquad j=1,2,
\end{equation}
for some constants $C_1,C_2>0$ and some integer $m\ge 1$. Let $U_1 \subseteq D_1$ and $U_2 \subseteq D_2$ be their subdomains.
Let $F: U_1 \to U_2$ be a biholomorphic map  which preserves the Bergman kernel forms of $D_1,D_2$ in the sense that
\begin{equation}\label{eqnfpreservebergmanform}
K_{D_1}=
K_{D_2}(F,F)\,|\det F'(z)|^2~~\text{on}~U_1.
\end{equation}
Then $F$ is a unitary map: $F(z)=Bz$ for some $B \in U(n)$ and $C_1=C_2.$
\end{proposition}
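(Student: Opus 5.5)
The plan is to first pin down $F$ from the metric alone using Calabi's rigidity for the Fubini--Study metric, and then use the full kernel identity \eqref{eqnfpreservebergmanform}, not just its $\partial\bar\partial\log$, to force $F$ to be linear and $C_1=C_2$.

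\textbf{Step 1: $F$ is a Fubini--Study isometry.} Take $i\partial\bar\partial\log$ of both sides of \eqref{eqnfpreservebergmanform}. The term $\log|\det F'|^2$ is pluriharmonic and disappears, and so do the constants. Using \eqref{eqnkdjformula}, we get on $U_1$
\[
m\, i\partial\bar\partial \log(1+\|z\|^2) = F^*\bigl(m\, i\partial\bar\partial\log(1+\|w\|^2)\bigr).
\]
Hence $F$ is a local holomorphic isometry of $(\mathbb{P}^n,\omega_{FS})$, viewed through the affine chart $\C^n\subseteq\mathbb{P}^n$. By Calabi's rigidity theorem \cite{Ca53}, $F$ is the restriction of a global isometry. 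The holomorphic isometries of $\mathbb{P}^n$ form $PU(n+1)$. So there is $g=\begin{pmatrix} b & c^t\\ a & A\end{pmatrix}\in U(n+1)$, with $b\in\C$, $a,c\in\C^n$, $A\in\C^{n\times n}$, such that
\[
F(z)=\frac{a+Az}{\ell(z)},\qquad \ell(z):=b+c^t z .
\]
This step is where I expect the main difficulty: the rigidity theorem must be applied on a small connected open set, and we must make sure $\ell$ does not vanish on $U_1$. The latter is automatic, since $F$ is holomorphic there.

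\textbf{Step 2: express both factors through $\ell$.} Since $g$ is unitary, it preserves the Hermitian norm of $(1,z)$. Dividing by $|\ell(z)|^2$ gives
\[
1+\|F(z)\|^2=\frac{1+\|z\|^2}{|\ell(z)|^2}.
\]
A standard computation for linear fractional maps gives
\[
\det F'(z)=\frac{\det g}{\ell(z)^{n+1}},\qquad\text{hence}\qquad |\det F'(z)|=|\ell(z)|^{-(n+1)},
\]
because $|\det g|=1$. (Alternatively, I would verify the formula directly by writing $F$ as a composition of a unitary map and a standard involutive automorphism of $\mathbb{P}^n$.)

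\textbf{Step 3: compare the kernels.} Substituting Step 2 into \eqref{eqnfpreservebergmanform} gives, on $U_1$,
\[
C_1(1+\|z\|^2)^m=C_2\,\frac{(1+\|z\|^2)^m}{|\ell(z)|^{2(m+n+1)}}.
\]
Therefore $|\ell|$ is constant on the open set $U_1$. A holomorphic function of constant modulus on an open set is constant, so $c=0$ and $\ell\equiv b$.

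Unitarity of $g$ now forces the remaining data. The first column of $g$ is $(b,a)$, and the first row is $(b,c^t)$ with $c=0$; both are unit vectors. The row condition gives $|b|=1$, and then the column condition gives $a=0$. Thus $A\in U(n)$, and $F(z)=b^{-1}Az=Bz$ with $B:=b^{-1}A\in U(n)$. Finally $|\ell|=1$, so the displayed identity reduces to $C_1=C_2$.
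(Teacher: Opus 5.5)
Your proposal is correct and follows the paper's proof essentially step for step. You use $i\partial\bar\partial\log$ and Calabi's rigidity to obtain a projective unitary map, then the identities $1+\|F\|^2=(1+\|z\|^2)/|\ell|^2$ and $|\det F'|=|\ell|^{-(n+1)}$, and then compare kernels to force $\ell$ to be constant, which gives $c=0$, $a=0$, $F(z)=Bz$ with $B\in U(n)$, and $C_1=C_2$. The only differences are cosmetic: you put the homogeneous coordinate first in $g$, and you derive the norm identity directly from $g$ preserving $\|(1,z)\|$.
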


\begin{proof}
By the expressions for their Bergman kernels in \eqref{eqnkdjformula}, we have the Bergman metric of $D_j, j=1, 2,$ equals the restriction of $m$ times the Fubini--Study metric to  $D_j \subseteq \C^n \subseteq \mathbb{P}^n.$ By applying $\partial\bar\partial \log$ to both sides of \eqref{eqnfpreservebergmanform}, we see the map $F$ is a holomorphic isometry between two open subsets of the affine chart $\C^n$ endowed with the Fubini--Study metric. It is well known that (for instance, by  Calabi's theorem~\cite{Ca53}) $F$ extends to a projective unitary transformation. Therefore, on the affine chart $\C^n\subseteq \pj^n$, the map $F$ has the form
\begin{equation}\label{eq:mobius-form}
	F(z)=\frac{Az+b}{\langle c,z\rangle + d},~~\text{where}~U:=
	\begin{pmatrix}
		A & b\\
		c^* & d
	\end{pmatrix}
	\in \U(n+1).
\end{equation}
%\begin{equation}\label{eqnunitary}
%U=
%\begin{pmatrix}
	%A & b\\
	%c^* & d
%\end{pmatrix}
%\in \U(n+1),
%\end{equation}
Here $A$ is an $n\times n$ matrix, $b,c\in \C^n$, $d\in \C$, and $\langle c,z\rangle=\sum_{i=1}^n c_i z_i$.
For such a projective unitary transformation, one has the identity
\begin{equation}\label{eq:FS-identity}
	1+\|F(z)\|^2
	=
	\frac{1+\|z\|^2}
	{|\langle c,z\rangle + d|^2}.
\end{equation}
Note a projective unitary transformation preserves the volume form of the Fubini--Study metric, and this volume form is given by $\bigl(1+ \|z\|^2 \bigr)^{-(n+1)}$, up to a constant, in the affine chart $\C^n$. Using these facts and \eqref{eq:FS-identity}, one also has
\begin{equation}\label{eq:jacobian-formula}
	|\det F'(z)|^2=|\langle c,z\rangle + d|^{-2(n+1)}
\end{equation}
%for some unimodular constant $\chi$ with $|\chi|=1$.
Then substituting \eqref{eq:FS-identity}, \eqref{eq:jacobian-formula} and \eqref{eqnkdjformula}  into \eqref{eqnfpreservebergmanform}, we obtain

\[
C_1(1+\|z\|^2)^m
=
C_2(1+ \|z\|^2)^m
|\langle c,z\rangle + d|^{-2(m+n+1)}.
\]
This simplifies into
$$C_1 |\langle c,z\rangle + d|^{2(m+n+1)}=C_2.$$
This implies that $\langle c,z\rangle + d$ is constant. Consequently, $c=0$. Then, by the expression of $U$ in \eqref{eq:mobius-form}, we have $b=0$, $|d|^2=1$, and $A \in U(n)$. %and hence $\frac{1}{d}A \in U(n)$. 
The conclusion then follows from \eqref{eq:mobius-form} by setting $B:=\frac{1}{d}A \in U(n)$. Consequently, by \eqref{eqnfpreservebergmanform}, $C_1=C_2.$
\end{proof}

We are now ready to prove Theorem \ref{thm2}.

{\bf Proof of Theorem \ref{thm2}.} We will prove part (1) by contrapositive. Let $\Omega \in \mathcal{F}(m,n; C)$ and $\Omega' \in \mathcal{F}(m,n; C'),$ be Bergman equivalent. Then by Remark \ref{rmkbergmanequivalence} or Proposition \ref{prop:rmk18}, there exist open dense subsets $D_1 \subseteq \Omega, D_2 \subseteq \Omega'$ and a biholomorphism $f: D_1 \to D_2$ such that the Bergman kernel forms of $\Omega$ and $\Omega'$ are preserved under $f$. It follows immediately from Proposition \ref{propfunitary} that $C=C'.$ This proves the contrapositive of (1).

%\smallskip

To establish~(2), for each $0< c < c_{m,n}$, let $\Omega^*_c(\mu)$, $\mu \in (0,1)$, be as defined in Remark~\ref{rmkfamilyomegac}. As discussed there, the domains $\Omega^*_c(\mu)$, $\mu \in (0,1)$, all satisfy the conclusion of Theorem~\ref{thm:final-domain}. 
Now set
$$C_{m,n}:=\frac{1}{c_{m,n}}, \quad C:=\frac{1}{c}, \quad \text{and} \quad \Omega_C(\mu):=\Omega^*_c(\mu).$$  
Then, for every $C>C_{m,n}$, we have
$$\{\Omega_C(\mu)=\Omega^*_{1/C}(\mu): \mu \in (0,1)\} \subseteq \mathcal{F}(m,n; C).$$
%To establish~(2), fix $0< c < c_{m,n}$ and $\mu \in (0,1)$, and let $\Omega^*_c(\mu)$ be as defined in Remark~\ref{rmkfamilyomegac}. Set $C:=\frac{1}{c}$ and $\Omega_C(\mu):=\Omega^*_c(\mu)$. 
%For each fixed $0< c < c_{m,n}$, it follows from Remark~\ref{rmkfamilyomegac} that the domains $\Omega^*_c(\mu)$, $\mu \in (0,1)$, all satisfy the conclusion of Theorem~\ref{thm:final-domain}. Consequently, for every $C>C_{m,n}:=\frac{1}{c_{m,n}}$, the family of domains
%\[
%\{\Omega_C(\mu): \mu \in (0,1)\} \subseteq \mathcal{F}(m,n; C).
%\]
We next prove domains in this family are mutually Bergman inequivalent. For that, we fix $\mu , \mu' \in (0,1)$ with $\mu \neq \mu'.$  Seeking a contradiction, suppose $\Omega_C(\mu)$ and $\Omega_C(\mu')$ are Bergman equivalent. Then by Remark \ref{rmkbergmanequivalence} or Proposition \ref{prop:rmk18}, there exist open dense subsets $D(\mu) \subseteq \Omega_C(\mu), D(\mu') \subseteq \Omega_C(\mu')$ and a biholomorphism $f: D(\mu) \to D(\mu')$ such that the Bergman kernel forms of $\Omega_C(\mu)$ and $\Omega_C(\mu')$ are preserved under $f$. By \eqref{eqninfnormomega} and the density property of $D(\mu)$ and $D(\mu')$, we have
\begin{equation}\label{eqninfnormd}
\inf_{z \in D(\mu)} \|z\|^2=\mu cs_0,~~\text{and}~~\inf_{z \in D(\mu')} \|z\|^2=\mu' cs_0,~~\text{where}~~c=\frac{1}{C}.
\end{equation}
On the other hand, it follows from Proposition \ref{propfunitary} that $f$ is a unitary map in $\C^n$. In particular, $f: D(\mu) \to D(\mu')$ is a unitary linear transformation, which preserves the  norm of every point. This contradicts \eqref{eqninfnormd}. Therefore, $\{\Omega_C(\mu): \mu \in (0,1) \} \subseteq \mathcal{F}(m,n; C)$ must be a mutually Bergman inequivalent family of domains. Finally, it follows from Remark \ref{rmkintersectdc} that $\Omega_C(\mu)$ and  $\Omega_{C'}(\mu')$ have nonempty intersection for 
any $C, C'> C_{m,n}$ and  $\mu,\mu' \in (0, 1)$. This completes the proof. \qed

\begin{remark}\label{rmkdeltainteriorpt}
Let $\Omega_C(\mu), \mu \in (0,1),$ be as above. Then it follows from the last part of Remark \ref{rmkintersectdc} that $\Omega_C(\mu) \Delta \Omega_C(\mu')$ has  nonempty interior for any $C > C_{m,n}:=\frac{1}{c_{m,n}}$ and $0< \mu'< \mu < 1.$ This justifies the assertion after Remark \ref{rmkintersectsamek}. 
\end{remark}

\smallskip

We finally prove Theorem \ref{thm3}, and then Theorem \ref{thm20}.

\smallskip

{\bf Proof of Theorem \ref{thm3}.} For $n=1$, the conclusion follows from Theorem~0 and Remark~\ref{rmkngreaterthan1}. For $n \geq 2$, it follows from Theorem~0, Theorems~\ref{thm1} and~\ref{thm2}, together with the discussion following Theorem~0. \qed

\smallskip

{\bf Proof of Theorem \ref{thm20}.}
Let $\omega_1$ and $\omega_2$ denote the K\"ahler forms associated with the Bergman metrics of $D_1$ and $D_2$, respectively. By assumption~(b), the Bergman metrics $\omega_1$ and $\omega_2$ on $D_1$ and $D_2$ are both restrictions of $\omega$. We proceed in four steps.
	
\smallskip
	
\noindent
\textbf{Step 1: transport of an orthonormal basis from $D_1$ to $D_2$.}
	
	Choose points $p_1\in D_1$ and $p_2\in D_2$ with $p_1 \neq p_2$. Since $\hat{D}$ is connected, we can choose a path $\gamma:[0,1]\to \hat{D}$ such that the following two conditions hold:
\begin{enumerate}	
\item[(1)]$\gamma(0)=p_1$ and $\gamma(1)=p_2$.
\item[(2)]There exists a simply connected tubular neighborhood $T\subseteq \hat{D}$ of $\gamma$.
\end{enumerate}

In the following proof, we will make use of the infinite-dimensional complex projective space $\mathbb{P}^{\infty}$, the (formal) Fubini--Study metric $\omega_{FS}$ on $\mathbb{P}^{\infty}$, holomorphic isometric embeddings into $(\mathbb{P}^{\infty}, \omega_{FS})$, and the Bergman--Bochner map. For definitions and preliminaries on these notions, see Section~3 of \cite{ETX2} (see also \cite{HLT}). Calabi's extension theorem and Calabi's rigidity theorem in \cite{Ca53} will also play an important role in the proof; see also \cite{ETX2, HL25} for the precise formulations we use here.
	
Since $D_1$ and $ D_2$ are bounded domains, their Bergman spaces separate points. Let $\{\varphi_j\}_{j=0}^\infty$ be an orthonormal basis of $A^2(D_1)$, and let $\{\psi_j\}_{j=0}^\infty$ be an orthonormal basis of $A^2(D_2)$. Define the associated Bergman--Bochner maps:
	\[
	\Phi_1:D_1\to \mathbb{P}^{\infty},\qquad
	\Phi_1(z)=[\varphi_0(z):\varphi_1(z):\cdots],
	\]
	\[
	\Phi_2:D_2\to \mathbb{P}^{\infty},\qquad
	\Phi_2(z)=[\psi_0(z):\psi_1(z):\cdots].
	\]
In particular, they are holomorphic isometric embeddings from $(D_1, \omega_1)$ and $(D_2, \omega_2)$ into $(\mathbb{P}^{\infty}, \omega_{FS})$. We also note by the property of the orthonormal bases,
the image of $\Phi_j, j=1,2,$ over any open subset of $D_j$ is not contained in any closed  proper projective linear subspace of $\mathbb{P}^{\infty}$.
	%Their pull-backs of the Fubini--Study form are the Bergman metrics of $D_1$ and $D_2$.
	
Write $U_1 \subseteq D_1$ for a small ball centered at $p_1$, and let $\Phi_1|_{U_1}$ denote the restriction of  $\Phi_1$ to $U_1$.
By Calabi's extension theorem, the projective map $\Phi_1|_{U_1}$ extends holomorphically and isometrically along every curve in $\widehat{D}$, and in particular along every curve in $T$, starting from $p_1$. Since $T$ is simply connected, this continuation is single-valued in $T$; denote the resulting holomorphic map on $T$ by $\widetilde\Phi_1:T\to \mathbb{P}^{\infty}$. Since the image of $\widetilde{\Phi}_1$ over $U_1$ is not contained in any closed proper projective linear subspace of $\mathbb{P}^{\infty}$, by analyticity the same holds for its image over any open subset of $T$.
Moreover, $\widetilde\Phi_1$ is an isometric immersion from $(T,\omega)$ to $(\mathbb{P}^{\infty},\omega_{FS})$. Recall that, by assumption~(b), $\omega=\omega_2$ on $D_2$, in particular in a neighborhood of $p_2$.  Then by Calabi's rigidity theorem \cite{Ca53}, after replacing the orthonormal basis $\{\psi_j\}_{j=0}^{\infty}$ of $A^2(D_2)$ by another orthonormal basis obtained from it by a unitary transformation, we may assume that
$\widetilde\Phi_1=\Phi_2$ in a small ball $U_2$ centered at $p_2$.  
We next prove:

\smallskip

{\bf Claim 1.} There exists a sequence of holomorphic functions $\{\widetilde{\varphi}_j\}_{j=0}^\infty$ in $T$ such that
$$\mathrm{(i)}~\widetilde{\varphi}_j=\varphi_j~\text{on}~U_1~~\text{for all }j;~~~~\text{and}~\mathrm{(ii)}~\widetilde\Phi_1=[\widetilde{\varphi}_0: \widetilde{\varphi}_1:\cdots]~\text{on}~T.$$
Consequently, $\sum_{j=0}^\infty |\widetilde{\varphi}_j|^2 = K$ on $T$, where the convergence is locally uniform on $T$.

\smallskip

{\bf Proof of Claim 1.} We first observe, by the existence of the map $\widetilde\Phi_1$, the following two statements hold:
\begin{enumerate}
	\item[(1)]For every $p \in T,$ there exists a small ball $V_p$  centered at $p$, and a sequence of holomorphic functions $\{h_{p, j}\}_{j=0}^\infty$, such that 

\begin{equation}\label{eqncannonp}
\widetilde\Phi_1=[h_{p, 0}: h_{p, 1}:\cdots]~\text{on}~V_p~\text{and}~\sum_{j=0}^\infty |h_{p, j}|^2 = K~\text{uniformly on}~V_p.
\end{equation}

We will call $(V_p, \{h_{p, j}\}_{j=0}^\infty)$ a canonical representation of $\widetilde\Phi_1$ at $p$. In particular, $(U_1, \{\varphi_j\}_{j=0}^\infty)$ is a canonical representation of $\widetilde\Phi_1$ at $p_1.$

\item[(2)] If $(V_p, \{h_{p, j}\}_{j=0}^\infty)$ and $(V_q, \{h_{q, j}\}_{j=0}^\infty)$ are canonical representations of $\widetilde\Phi_1$ at $p, q \in T,$ respectively, and $V_p \cap V_q \neq \emptyset,$ then by \eqref{eqncannonp} and Calabi's rigidity theorem,  there exists a unitary isomorphism $U$ of $\ell^2$ such that
$$(h_{p, 0}, h_{p, 1}, \cdots)=(h_{q, 0}, h_{q, 1}, \cdots) U~\text{on}~V_p \cap V_q.$$
\end{enumerate}
Then, by (1) and (2) and a standard monodromy argument, $F_1 := (\varphi_0, \varphi_1, \cdots)$, and hence in particular every $\varphi_j$, extends holomorphically to $T$.
Write the resulting holomorphic map on $T$ as
\[
\widetilde F_1 = (\widetilde{\varphi}_0, \widetilde{\varphi}_1, \cdots).
\]
In particular, each $\widetilde{\varphi}_j$ is the analytic continuation of $\varphi_j$ from $U_1$ to $T$. Moreover, by \eqref{eqncannonp} we have
\begin{equation}\label{eqntildevarphit}
\sum_{j=0}^\infty |\widetilde{\varphi}_j|^2 = K~\text{on}~T,
\end{equation}
where the series above converges locally uniformly on $T$. This proves Claim 1. \qed

\smallskip

Let  $\widetilde{\Phi}_1$ be as above, and $\{\widetilde{\varphi}_j\}_{j=0}^\infty$ be as in Claim 1.
Since $\widetilde{\Phi}_1=\Phi_2$ on $U_2$, there exists a nowhere-vanishing holomorphic function $h$ on $U_2$ such that
$\psi_j = h\,\widetilde{\varphi}_j$ on $U_2$ for all $j \geq 0.$
Taking squared norms and summing, we obtain 
\[
K_2
=\sum_{j=0}^\infty |\psi_j|^2
=|h|^2\sum_{j=0}^\infty |\widetilde{\varphi}_j|^2
=|h|^2 K~~\text{on}~U_2.
\]
Since $K_2=\lambda K$ on $D_2$, it follows that $|h|^2=\lambda$ on $U_2$.
Because $h$ is holomorphic on the connected set $U_2$, it must be constant. After multiplying all $\psi_j$ by the same unimodular constant, we may assume that $h=\sqrt{\lambda}$. Consequently,
\begin{equation}\label{eqnpsivarphi}
	\psi_j=\sqrt{\lambda}\,\widetilde{\varphi}_j~~\text{on}~U_2,
	\qquad j \geq 0.
\end{equation}

	\smallskip
	
\noindent
\textbf{Step 2: the induced unitary map and the complex moments.}

\smallskip
	
Define a linear map $\mathcal{U}:A^2(D_1)\to A^2(D_2)$
by prescribing $\mathcal{U}(\varphi_j)=\psi_j$ for all $j \geq 0.$ More precisely, if
\begin{equation}\label{eqnfd1}
f=\sum_{j=0}^\infty c_j\varphi_j \in A^2(D_1)~\text{where}~(c_0, c_1, \cdots) \in \ell^2,
\end{equation}
then we define
\begin{equation}\label{eqnufden}
\mathcal{U}(f)=\sum_{j=0}^\infty c_j\psi_j.
\end{equation}
Since $\{\varphi_j\}$ and $\{\psi_j\}$ are orthonormal bases, $\mathcal{U}$ is unitary. 
Let $U_1 \subseteq D_1$ be a small ball centered at $p_1$ as above, and let $\{\widetilde{\varphi}_j\}_{j=0}^\infty$ be as in Claim~1. We prove the following claim about $\mathcal{U}$:

{\bf Claim 2.}  Let $f \in A^2(D_1)$. Then $f|_{U_1}$ admits an analytic extension to $T$, denoted by $\widetilde f$. Moreover,
if $f$ is given by \eqref{eqnfd1}, then
\begin{equation}\label{eqnwtildef}
\widetilde f=\sum_{j=0}^\infty c_j\,\widetilde{\varphi}_j
\qquad\text{on }T,
\end{equation}
As a consequence,
\begin{equation}\label{eqnuflambda}
\mathcal{U}(f)= \sqrt{\lambda} \,\widetilde f~~\text{on}~U_2.
\end{equation}

\smallskip

{\bf Proof of Claim 2.} Consider the series on the right-hand side of \eqref{eqnwtildef}: $g:=\sum_{j=0}^\infty c_j\,\widetilde{\varphi}_j.$ By \eqref{eqntildevarphit} and the fact that $(c_0, c_1, \cdots) \in \ell^2$, it converges pointwise on $T$. Moreover, by the Cauchy--Schwarz inequality,
\[
\sum_{j=m_1}^{m_2} |c_j\widetilde{\varphi}_j|
\le
\Big(\sum_{j=m_1}^{m_2}|c_j|^2\Big)^{1/2}
\Big(\sum_{j=m_1}^{m_2} |\widetilde{\varphi}_j|^2\Big)^{1/2}
\]
Consequently, the series $g$ converges locally uniformly on $T$, and thus defines a holomorphic function on $T$. On $U_1,$
$$g=\sum_{j=0}^\infty c_j\varphi_j=f.$$
So by uniqueness of analytic continuation, it is precisely the analytic extension of $f$ to $T$. This proves \eqref{eqnwtildef}. Finally, we combine \eqref{eqnpsivarphi}, \eqref{eqnufden} and
\eqref{eqnwtildef} to conclude \eqref{eqnuflambda}. This finishes the proof of Claim 2. \qed

\smallskip

Let $\alpha \in \N^n$. As $D_1$ and $D_2$ are bounded, the monomial $z^\alpha$ belongs to both $A^2(D_1)$ and $A^2(D_2)$. Besides, its analytic extension to $T$ is again the same monomial. Then by \eqref{eqnuflambda} in Claim 2,
\[
\mathcal{U}(z^\alpha)=\sqrt{\lambda} z^\alpha.
\]
Therefore, since $\mathcal{U}$ is unitary, we have  for all multi-indices $\alpha,\beta \in \N^n$,
\begin{equation}\label{eqn2integral}
	\int_{D_1} z^\alpha \overline{z^\beta}\,\dd V_{2n}(z)
	=
	\langle z^\alpha,z^\beta\rangle_{A^2(D_1)}
	=
	\langle \mathcal{U}(z^\alpha), \mathcal{U}(z^\beta)\rangle_{A^2(D_2)}
	=
	\lambda\int_{D_2} z^\alpha \overline{z^\beta}\,\dd V_{2n}(z).
\end{equation}

	\medskip
	
\noindent
\textbf{Step 3: uniqueness of the complex moment problem and the conclusion.}

\smallskip
	
Write $\chi_{F}$ for the characteristic function of a set $F \subseteq \C^n$.
Define two Borel measures as follows. For any measurable $E$,
$$\eta_1(E):= \int_{E} d \eta_1, \quad \text{where}~d \eta_1:=  \chi_{D_1} \dd V_{2n};$$
$$\eta_2(E):= \int_{E} d \eta_2, \quad \text{where}~d \eta_2:= \lambda\, \left(\chi_{D_2} \dd V_{2n} \right);$$
Then \eqref{eqn2integral} can be rewritten as
\[
\int_{\C^n} z^\alpha \overline{z^\beta}\,d\eta_1
= \int_{\C^n} z^\alpha \overline{z^\beta}\,d\eta_2
\qquad\text{for all multi-indices}~\alpha,\beta.
\]
By the uniqueness of compactly supported solutions to complex moment problem (see, for example, \cite[Proposition 12.17]{Sc17}), we conclude the two measures coincide: $\eta_1=\eta_2.$
Note by the definition of $\eta_2,$ we have $\eta_2(D_1\setminus D_2)=0,$ and thus $\eta_1(D_1\setminus D_2)=0.$ This yields $D_1\setminus D_2$ has Lebesgue measure zero. Similarly, we can prove
$D_2\setminus D_1$ has Lebesgue measure zero as well. Consequently,  $D_1\cap D_2$ is dense in both $D_1$ and $D_2$, and thus the union $D:=D_1\cup D_2$ is connected. Finally, since $\eta_1(D_1\cap D_2)=\eta_2(D_1\cap D_2)$, we immediately conclude $\lambda=1.$

%%%%%%%%%%%%%%%%%%%%%%%%%%%%%%%%%%%%%%%%%%%%%%%%%%%%%%%%%%%%%%%%%%%
%%%%%%%%%%%%%%%%%%%%%%%%%%%%%%%%%%%%%%%%%%%%%%%%%%%%%%%%%%%%%%%%%%%
%%%%%%%%%%%%%%%%%%%%%%%%%%%%%%%%%%%%%%%%%%%%%%%%%%%%%%%%%%%%%%%%%%%
%%%%%%%%%%%%%%%%%%%%%%%%%%%%%%%%%%%%%%%%%%%%%%%%%%%%%%%%%%%%%%%%%%%
%%%%%%%%%%%%%%%%%%%%%%%%%%%%%%%%%%%%%%%%%%%%%%%%%%%%%%%%%%%%%%%%%%%
%%%%%%%%%%%%%%%%%%%%%%%%%%%%%%%%%%%%%%%%%%%%%%%%%%%%%%%%%%%%%%%%%%%

\medskip

\noindent
\textbf{Step 4: Bergman negligibility of $D\setminus D_1$ and $D\setminus D_2$.}

\smallskip

Now we additionally assume  that $K$ admits a sesqui-holomorphic extension $\widetilde K(\cdot,\cdot)$ to $\hat{D} \times  \hat{D}.$
Since $K_1=K$ on $D_1$ and $K_2=K$ on $D_2$, it follows from uniqueness of sesqui-holomorphic functions and connectedness of $D_1, D_2$ that
%\[
%K_1(z,\bar z)=\widetilde K(z,\bar z)\quad \text{for } z\in D_1,
%\qquad
%K_2(z,\bar z)=\widetilde K(z,\bar z)\quad \text{for } z\in D_2.
%\]
\[
K_1(z,w)=\widetilde K(z,w)\quad \text{on } D_1\times D_1,
\qquad
K_2(z,w)=\widetilde K(z,w)\quad \text{on } D_2\times D_2.
\]
In particular,
\begin{equation}\label{eqnk12zw}
K_1(z,w)=K_2(z,w)
\quad \text{for all } z,w\in D_1\cap D_2.
\end{equation}
Let $\mathcal H_j:=A^2(D_j)$ for $j=1,2$, and define
\[
\mathcal E_j:=\mathrm{Span}\{K_j(\cdot, w): w\in D_1\cap D_2\}\subseteq \mathcal H_j.
\]
We define a linear operator $\mathcal J:\mathcal E_1 \to \mathcal E_2$ as follows.
For
\begin{equation}
	\phi=\sum_{j=1}^s c_j K_1(\cdot, w_j) \in \mathcal E_1,~~~\text{where } s \in \Z_+, w_j \in D_1\cap D_2, \text{ and } c_j \in \C,
\end{equation}
we set
$\mathcal J \phi:=\sum_{j=1}^s c_j K_2(\cdot, w_j) \in \mathcal E_2.$
It follows from \eqref{eqnk12zw} that $\mathcal J$ is well defined.
By a similar argument as in the proof of Proposition \ref{prop:rmk15}, we conclude that
%define
%\[
%\mathcal J \bigl(K_1(\cdot, w)\bigr):=K_2(\cdot, w),\qquad w\in D_1\cap D_2.
%\]
%and then extend by linearity.  Moreover, we claim 
$\mathcal J$ is surjective and isometric from $\mathcal E_1$ to $\mathcal E_2$;  %To see that, by the reproducing property of the Bergman kernels, we have for $z , w \in D_1\cap D_2$,
%We first show that $U_0$ is well defined. Suppose
%\[
%\sum_{j=1}^N c_j K_1(\cdot,\overline{w_j})=0
%\]
%for some pairwise distinct points $w_1,\cdots,w_N\in D_1\cap D_2$. 
%Since $D_1$ is bounded, polynomials belong to $A^2(D_1)$. 
%Choose polynomials $p_m$ such that $p_m(w_j)=\delta_{mj}$. Then
%\[
%0=\Big\langle p_m,\sum_{j=1}^N c_j K_1(\cdot,\overline{w_j})\Big\rangle
%=\sum_{j=1}^N c_j p_m(w_j)=c_m,
%\]
%so all $c_j=0$. Hence $U_0$ is well defined.
%Moreover, $U_0$ is isometric. For $w,\zeta\in D_1\cap D_2$,
%\[
%\langle K_1(\cdot, w),K_1(\cdot, \zeta)\rangle_{\mathcal H_1}
%=K_1(\zeta, w)
%=K_2(\zeta, w)
%=\langle K_2(\cdot,w),K_2(\cdot,\zeta)\rangle_{\mathcal H_2}.
%\]
and $\mathcal E_j$ is dense in $\mathcal H_j, j=1,2$. %If $f\in \mathcal H_1$ is orthogonal to $\mathcal E_1$, then 
%$$f(w)=\langle f(\cdot),K_1(\cdot, w)\rangle_{\mathcal H_1}=0~\text{for all}~w\in D_1\cap D_2.$$ 
%Since $D_1\cap D_2$ is dense in $D_1$ and $f$ is holomorphic, $f\equiv 0$. Thus $\mathcal E_1$ is dense, and similarly for $\mathcal E_2$.
As a consequence, $\mathcal J$ extends uniquely to a unitary operator from $\mathcal H_1$ onto $\mathcal H_2$, which we continue to denote by $\mathcal J$.

Let $f\in \mathcal H_1$ and $w\in D_1\cap D_2$. Then
\[
(\mathcal{J}f)(w)=\langle \mathcal{J} f, K_2(\cdot,w)\rangle
=\langle f,K_1(\cdot, w)\rangle
=f(w).
\]
Here the second equality follows from the fact that $\mathcal{J}$ is unitary and that $K_2(\cdot,w)=\mathcal{J}K_1(\cdot,w)$. This shows $\mathcal Jf=f$ on $D_1\cap D_2$.
Consequently,
\[
F(z):=
\begin{cases}
	f(z), & z\in D_1,\\
	\mathcal Jf(z), & z\in D_2.
\end{cases}
\]
is well defined and holomorphic on $D=D_1\cup D_2$. Moreover,
\[
\int_D |F|^2
=
\int_{D_1} |f|^2
+
\int_{D_2\setminus D_1} |\mathcal Jf|^2.
\]
Since $D_2\setminus D_1$ has measure zero, the second term vanishes, so $\|F\|_{A^2(D)}=\|f\|_{A^2(D_1)}$.
Hence $D\setminus D_1$ is Bergman negligible, so $D_1\leqB D$. By symmetry $D_2\leqB D$, and thus $D_1\sim_B D_2$. \qed

%%%%%%%%%%%%%%%%%%%%%%%%%%%%%%%%%%%%%%%%%%%%%%%%%%%%%%%%%%%%%%%%%%%
%%%%%%%%%%%%%%%%%%%%%%%%%%%%%%%%%%%%%%%%%%%%%%%%%%%%%%%%%%%%%%%%%%%
%%%%%%%%%%%%%%%%%%%%%%%%%%%%%%%%%%%%%%%%%%%%%%%%%%%%%%%%%%%%%%%%%%%
%%%%%%%%%%%%%%%%%%%%%%%%%%%%%%%%%%%%%%%%%%%%%%%%%%%%%%%%%%%%%%%%%%%
%%%%%%%%%%%%%%%%%%%%%%%%%%%%%%%%%%%%%%%%%%%%%%%%%%%%%%%%%%%%%%%%%%%
%%%%%%%%%%%%%%%%%%%%%%%%%%%%%%%%%%%%%%%%%%%%%%%%%%%%%%%%%%%%%%%%%%%
%%%%%%%%%%%%%%%%%%%%%%%%%%%%%%%%%%%%%%%%%%%%%%%%%%%%%%%%%%%%%%%%%%%
%%%%%%%%%%%%%%%%%%%%%%%%%%%%%%%%%%%%%%%%%%%%%%%%%%%%%%%%%%%%%%%%%%%

\end{document}